\newtheoremstyle{swappedplain}{\topsep}{\topsep}%
{\itshape}
{}
{\bfseries}
{.}
{ }
{\thmnumber{(#2)}\thmname{ #1}\thmnote{ #3}}
\newtheoremstyle{swappeddefinition}{\topsep}{\topsep}%
{}
{}
{\bfseries}
{.}
{ }
{\thmnumber{(#2)}\thmname{ #1}\thmnote{ #3}}
\theoremstyle{definition}
\newtheorem{defi}{Definition}[section]
\newtheorem{rema}[defi]{Remark}
\newtheorem{remas}[defi]{Remarks}
\newtheorem{exam}[defi]{Example}
\theoremstyle{plain}
\newtheorem{prop}[defi]{Proposition}
\newtheorem{theo}[defi]{Theorem}
\newtheorem{coro}[defi]{Corollary}
\newtheorem{lemm}[defi]{Lemma}
\newtheorem*{theono}{Theorem}
\renewcommand*{\emptyset}{\varnothing}
\newcommand{\f}[1]{\it #1\rm}							
\newcommand{\llangle}{\langle\!\langle}						
\newcommand{\rrangle}{\rangle\!\rangle}						
\newcommand{\resprod}{\setbox0 = \hbox{$\displaystyle\prod$}			
    \mathop{\vtop{\copy0\kern -1.6pt \hrule}}}					
\newcommand{\prlim}[1]{\lim\limits_{\substack{\longleftarrow\\ #1}}}		
\newcommand{\cd}{\mbox{\it cd}\ }						
\newcommand{\infl}{\mbox{\it inf}\ }						
\newcommand{\tg}{\mbox{\it tg}}							
\newcommand{\tr}{\mbox{\it tr}}							
\newcommand{\Gal}{\operatorname{Gal}}						
\newcommand{\dimfp}{\dim_{\mathbb{F}_p}}
\newcommand{\modulo}{\operatorname{mod}}
\newcommand{\Hom}{\operatorname{Hom}}						
\newcommand{\xyalign}{\entrymodifiers={+!!<0pt,\fontdimen22\textfont2>}}
\newcommand{\textxymatrix}[1]{\xymatrix@C=12pt{#1}}				
\newcommand{\mf}[1]{\mathfrak{#1}}						
\newcommand{\mc}[1]{\mathcal{#1}}						
\newcommand{\gr}{\operatorname{gr}}						
\newcommand{\grg}{\mathbb{F}_p\llbracket G \rrbracket}				
\newcommand{\power}{\mathbb{F}_p\llangle X \rrangle}				
\newcommand{\polyx}{\mathbb{F}_p\langle X\rangle}				
\newcommand{\kpolyx}{k\langle X\rangle}						
\newcommand{\stackrels}[2]{\stackrel{#1}{\mbox{\tiny $#2$}}} 			
\newcommand{\Q}{\mathbb{Q}}							
\newcommand{\N}{\mathbb{N}}							
\newcommand{\Z}{\mathbb{Z}}							
\newcommand{\F}{\mathbb{F}}							
\newcommand{\ol}[1]{\overline{#1}}						
\newcommand{\CC}{\mc{C}}							
\newcommand{\z}{\mf{z}}								
\long\def\symbolfootnote[#1]#2{\begingroup%
\def\thefootnote{\fnsymbol{footnote}}\footnote[#1]{#2}\endgroup}
	\renewcommand{\thefootnote}{}
\begin{document}

\thispagestyle{empty}
\pagestyle{fancy}
\markleft{\it Gärtner, Higher Massey products in the cohomology of mild pro-$p$-groups}
\markright{\it Gärtner, Higher Massey products in the cohomology of mild pro-$p$-groups}

\begin{abstract}
Translating results due to J.\ Labute into group cohomological language, A.\ Schmidt proved that a finitely presented pro-$p$-group $G$ is mild and hence of cohomological dimension $\cd G=2$ if $H^1(G,\F_p)=U\oplus V$ as $\F_p$-vector space and the cup-product $H^1(G,\F_p)\otimes H^1(G,\F_p)\to H^2(G,\F_p)$ maps $U\otimes V$ surjectively onto $H^2(G,\F_p)$ and is identically zero on $V\otimes V$. This has led to important results in the study of $p$-extensions of number fields with restricted ramification, in particular in the case of tame ramification. In this paper, we extend Labute's theory of mild pro-$p$-groups with respect to weighted Zassenhaus filtrations and prove a generalization of the above result for higher Massey products which allows to construct mild pro-$p$-groups with defining relations of arbitrary degree. We apply these results for one-relator pro-$p$-groups and obtain some new evidence of an open question due to Serre.
\end{abstract}

\title{Higher Massey products in the cohomology of mild pro-$p$-groups}
\author{by Jochen Gärtner at Heidelberg}
\date{\today}
\maketitle

\footnote{\it 2010 Mathematics Subject Classification. \rm 12G10, 20F05, 20F40, 22D15, 55S30.\\
\it key words and phrases. \rm Mild pro-$p$-groups, Massey products, cohomological dimension, one-relator pro-$p$-groups.}

\section{Statement of results}

The central objects studied in this paper are \f{mild pro-$p$-groups} whose concept is due to J.\ Labute. By definition mild pro-$p$-groups admit a presentation in terms of generators and relators, where the relators satisfy a condition of being free in a maximal possible way with respect to suitably chosen filtrations. Mild pro-$p$-groups are of cohomological dimension $2$ which is one of the main reasons for their importance also from an arithmetic point of view. 

In \cite{JL}, using the theory of mild pro-$p$-groups, J.\ Labute gave the first examples of groups of the form $G_S(p)=\Gal(\Q_S(p)|\Q)$ satisfying $\cd G_S(p)=2$ in the \f{tame case}, i.e.\ where $p\not\in S$. Here $p$ is an odd prime number and $\Q_S(p)$ denotes the maximal pro-$p$-extension of $\Q$ unramified outside $S$. This has led to further extensive contributions in more general situations including arbitrary number fields and the case $p=2$. A.\ Schmidt showed that over a number field the sets $S$ of primes satisfying $\cd G_S(p) = 2$ are cofinal among all finite sets of primes even in the tame case (cf.\ \cite{AS2}, Th.\ 1.1).

These arithmetic results require efficient methods to decide when a given pro-$p$-group is mild. Such a criterion is given by the following

\pagebreak

\vspace{10pt} \noindent
\bf Cup-product criterion:\it

\noindent Let $p$ be prime number and $G$ be a finitely presented pro-$p$-group such that \linebreak $H^2(G,\F_p)\neq 0$. Assume that $H^1(G,\F_p)$ admits a decomposition $H^1(G,\F_p)=U\oplus V$ as $\F_p$-vector space, such that the following holds: 
\begin{itemize}
 \item[\rm (i)] The cup-product $V\otimes V\stackrel{\cup}{\longrightarrow} H^2(G,\F_p)$ is trivial. 
 \item[\rm (ii)] The cup-product $U\otimes V\stackrel{\cup}{\longrightarrow} H^2(G,\F_p)$ is surjective. 
\end{itemize}
Then $G$ is a mild pro-$p$-group, in particular $\cd G=2$ holds.
\rm
\vspace{10pt} 

For $p>2$, this result is a cohomological reformulation of \cite{AS3}, Th.5.5, which in turn follows from a result due to J.\ Labute on so-called \f{strongly free sequences} in Lie algebras. For $p=2$ it has been proven by J.\ Labute and J.\ Miná\v{c} \cite{LM} using mixed Lie algebras and later independently by P.\ Forré \cite{PF2}. Obviously the assumptions of the above criterion imply the surjectivity of the cup-product 
\[
H^1(G,\F_p)\otimes H^1(G,\F_p)\stackrel{\cup}{\longrightarrow} H^2(G,\F_p).
\]
Equivalently, if $G=F/R$ is a minimal presentation of $G$ (i.e.\ $F$ is a free pro-$p$-group and the inflation map $H^1(G,\F_p)\longrightarrow H^1(F,\F_p)$ is an isomorphism) and $r_1,\ldots, r_m\in R$ is a minimal system of defining relations, then the $r_i$ must be linearly independent modulo $F_{(3)}$ where $F_{(n)}$ denotes the \f{Zassenhaus filtration} of $F$. If this is not the case, e.g.\ if $R\subseteq F_{(3)}$, then $G$ can still be mild, but the cup-product does not contain enough information. 

However, if the cup-product is trivial, then inductively there exist well-defined multilinear \f{$n$-fold Massey products}
\[
 \langle \cdot,\ldots, \cdot \rangle_n: H^1(G,\F_p)^n\longrightarrow H^2(G,\F_p),\ n\ge 2,
\]
which can be seen as higher analogs of the cup-product. As noticed by H. Koch, there is a close connection between these higher Massey products and relations lying in higher Zassenhaus filtration steps of $F$.  The exact identities have been proven independently by D.\ Vogel and M.\ Morishita (\cite{DV2} and \cite{MM2}). 

The main result of this paper is to prove the following generalization of the cup-product criterion to arbitrary higher Massey products. We define the \f{Zassenhaus invariant} $\z(G)$ by
\[
 \z(G)=\sup_{n\ge 2}\left\{\langle \cdot,\ldots,\cdot \rangle_k: H^1(G,\F_p)^k\to H^2(G,\F_p)\ \mbox{is trivial for all}\ k <n\right\}.
\]

\begin{theono}[Th.\ref{cohomologicalcrit}]
 Let $p$ be a prime number and $G$ a finitely presented pro-$p$-group with $n=\z(G)<\infty$. Assume that $H^1(G,\F_p)$ admits a decomposition $H^1(G,\F_p)=U\oplus V$ as $\F_p$-vector space such that for some natural number $e$ with $1\le e \le n-1$ the $n$-fold Massey product $\langle\cdot, \ldots, \cdot \rangle_n: H^1(G,\F_p)^n \longrightarrow H^2(G,\F_p)$ satisfies the following conditions:
\begin{itemize}
 \item[\rm (a)] $\langle \xi_1,\ldots, \xi_n \rangle_n =0$ for all tuples $(\xi_1,\ldots, \xi_n)\in H^1(G,\F_p)^n$ such that $\#\{i\ |\ \xi_i\in V\}\ge n-e+1$.
 \item[\rm (b)] $\langle\cdot, \ldots, \cdot \rangle_n$ maps 
\[
U^{\otimes e} \otimes V^{\otimes n-e}
\]
surjectively onto $H^2(G,\F_p)$.
\end{itemize}
Then $G$ is mild (with respect to the Zassenhaus filtration). In particular, $G$ is of cohomological dimension $\cd G=2$.
\end{theono}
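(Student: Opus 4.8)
The plan is to reduce the cohomological statement to a purely Lie-algebraic mildness criterion with respect to a suitably \emph{weighted} Zassenhaus filtration, mimicking the strategy behind the classical cup-product criterion but working one Zassenhaus level deeper. Write $G=F/R$ for a minimal presentation with $F$ free pro-$p$ on $x_1,\dots,x_d$, and let $r_1,\dots,r_m$ be a minimal system of defining relations. Since $\z(G)=n$, all Massey $k$-fold products vanish for $k<n$, so by the Koch--Vogel--Morishita identities (cf.\ \cite{DV2}, \cite{MM2}) the relations $r_i$ lie in $F_{(n)}$ but are linearly independent modulo $F_{(n+1)}$; their initial forms $\rho_i:=r_i \bmod F_{(n+1)}$ are thus nonzero homogeneous elements of degree $n$ in the free restricted Lie algebra (equivalently, the graded $\F_p$-algebra) $\gr F=\F_p\langle\xi_1,\dots,\xi_d\rangle$ associated to the Zassenhaus filtration. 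The $n$-fold Massey product dualizes to pairing $H^1(G,\F_p)^n$ against the coefficients of $\rho_i$ when each $\rho_i$ is expanded in the monomial basis $\xi_{j_1}\cdots\xi_{j_n}$; conditions (a) and (b) on the Massey product translate into: (a$'$) every monomial occurring in some $\rho_i$ uses at least $e$ of the ``$U$-letters'' (the $\xi_j$ dual to a chosen basis of $U$), and (b$'$) the span of the $\rho_i$ surjects onto the span of the degree-$n$ monomials with \emph{exactly} $e$ $U$-letters, modulo those with more.

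Next I would set up the weighted Zassenhaus filtration: assign weight $1$ to each generator $\xi_j$ dual to $V$ and weight $w$ (a large integer to be chosen) to each $\xi_j$ dual to $U$, and let $\mathfrak{g}=\gr_\bullet F$ with respect to the induced weighting. Condition (a$'$) forces each $\rho_i$ to have weighted degree $\ge (n-e)\cdot 1 + e\cdot w$, and I would choose $w$ so that the weighted-degree-$\bigl((n-e)+ew\bigr)$ part of $\mathfrak g$ is precisely the $\F_p$-span of monomials with exactly $e$ $U$-letters (and the rest $V$-letters), which happens once $w$ exceeds $n$. Then by (b$'$) the initial forms of the $r_i$ with respect to the weighted filtration span this entire graded piece. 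The key point is that this makes the sequence $\rho_1,\dots,\rho_m$ a \emph{strongly free} (equivalently combinatorially free) sequence in the weighted sense: a sequence whose quadratic-dual / Anick-resolution argument shows the quotient $\mathfrak g/(\rho_1,\dots,\rho_m)$ has the ``expected'' Hilbert series. This is exactly the content of Labute's theory extended to weighted filtrations, which the paper has developed in the sections preceding this theorem; I would invoke that extended criterion (the analogue of \cite{AS3}, Th.\ 5.5 for weighted Zassenhaus filtrations) to conclude that $G$ is mild with respect to the weighted filtration.

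Finally, I would argue that mildness with respect to \emph{any} such weighted Zassenhaus filtration implies mildness with respect to the ordinary (weight-one) Zassenhaus filtration, or at least implies $\cd G = 2$ directly: mildness gives that the associated graded algebra $\gr\F_p\llbracket G\rrbracket$ is the quotient of the free associative algebra by the two-sided ideal generated by the $\rho_i$, and this quotient has global dimension $2$ and the Hilbert series forcing $\cd G\le 2$ via the standard comparison between $\Tor$ over the completed group algebra and $H^\ast(G,\F_p)$; since $H^2(G,\F_p)\ne 0$ we get $\cd G = 2$ exactly. I would then note that the definition of ``mild'' in the statement is with respect to the Zassenhaus filtration, so I must either show the ordinary initial forms are themselves strongly free (which follows because passing from the weighted to the ordinary grading only refines the monomial decomposition and the strong freeness is inherited), or adopt the convention — justified earlier in the paper — that a group mild with respect to some weighting of the Zassenhaus filtration is called mild.

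The main obstacle I expect is the translation step: verifying precisely that conditions (a) and (b), phrased via $n$-fold Massey products which are only defined up to indeterminacy and require compatible choices of defining systems, correspond cleanly to the monomial conditions (a$'$) and (b$'$) on the initial forms $\rho_i$. The Massey product indeterminacy is controlled here exactly because $\z(G)=n$ forces all lower products to vanish, so the $n$-fold product is genuinely well-defined and multilinear as stated; nonetheless, pinning down the sign/coefficient dictionary between the Magnus expansion of $r_i$ in $\F_p\llbracket F\rrbracket$ and the Massey product values — and checking that ``at least $n-e+1$ entries in $V$'' on the cohomology side matches ``at least $e$ $U$-letters'' on the algebra side — is the delicate bookkeeping on which the whole reduction rests. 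Once that dictionary is in place, the rest is an application of the weighted strong-freeness machinery established earlier in the paper.
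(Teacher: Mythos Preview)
Your translation of the Massey-product hypotheses into monomial constraints on the initial forms $\rho_i$ via the Koch--Vogel--Morishita dictionary is essentially correct and is exactly how the paper begins. But from that point on your strategy diverges from the paper's, and the divergence creates a genuine gap.

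The paper does \emph{not} introduce a weighted Zassenhaus filtration to prove this theorem. It works throughout with the ordinary Zassenhaus filtration and instead constructs a special multiplicative monomial order $<_U$ on $\F_p\langle X\rangle$ (Definition~\ref{specialorder}) tailored to the subset $U\subseteq\{X_1,\dots,X_d\}$: roughly, monomials with more $U$-letters, and with those $U$-letters further to the left, are smaller. Condition~(a) forces every monomial of $\rho_j$ to contain at least $e$ $U$-letters; consequently the $<_U$-high term of each $\rho_j$ must lie in the set $B$ of monomials whose first $e$ letters are from $U$ and last $n-e$ from $V$. Condition~(b) is then used, after passing to a carefully chosen basis of $H^2(G)$ putting the Massey-product matrix into row echelon form (and choosing the $r_j$ dual to that basis), to guarantee the high terms are \emph{pairwise distinct} members of $B$. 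Such a sequence of monomials is visibly combinatorially free (each begins with a $U$-letter and ends with a $V$-letter), and Anick's criterion (Theorem~\ref{anickscrit}) delivers strong freeness directly in the ordinary grading.

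Your weighted-filtration route, by contrast, leaves two holes. First, even granting strong freeness of the weighted initial forms, you assert that mildness with respect to a weighted Zassenhaus filtration either implies mildness with respect to the ordinary one, or that the paper adopts this as a convention. Neither is true: the remark following Theorem~\ref{mainmildtheo} gives the explicit example $G=\langle x_1,x_2\mid x_1^2x_2^4\rangle$, mild for $\tau=(2,1)$ but \emph{not} for $\tau=(1,1)$, and the paper's definition of ``mild'' in the theorem statement is specifically the unweighted one. The claim that ``strong freeness is inherited'' when passing from weighted to ordinary grading is unjustified and false in general. Second, your condition~(b$'$) is stated too loosely: condition~(b) concerns only $U^{\otimes e}\otimes V^{\otimes n-e}$, i.e.\ monomials with the $U$-letters in the \emph{first} $e$ slots, not all monomials with exactly $e$ $U$-letters; your proposal never explains how surjectivity on this smaller set suffices, whereas in the paper's argument it suffices precisely because the order $<_U$ singles out those monomials as the largest ones among all monomials with $e$ $U$-letters. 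The row-echelon step and the monomial order $<_U$ are the ideas you are missing.
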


Note that we do \it not \rm assume that $p\nmid \z(G)$. Hence, in the case $\z(G)=2$, we obtain a new proof of the cup-product criterion including the case $p=2$. The above theorem allows to construct mild pro-$p$-groups with relations of arbitrary degree with respect to the Zassenhaus filtration.

A concept very closely related to the idea of mild pro-$p$-groups has already been introduced in \cite{JLOneRel} in the case of \f{one-relator pro-$p$-groups}. By an open question due to Serre and Gildenhuys, a one-relator pro-$p$-group $G$ should be of cohomologicial dimension $\cd G=2$ unless the generating relation can be chosen to be a $p$-th power. It turns out that the theory of mild pro-$p$-groups with respect to Zassenhaus filtrations and higher Massey products yields a number of interesting applications in the case of one-relator pro-$p$-groups:

\begin{theono}[Cor.\ref{onerelatorzassenhaus}]
 Let $G$ be a one-relator pro-$p$-group such that the Zassenhaus invariant $\z(G)$ is prime to $p$. Then $G$ is mild with respect to the Zassenhaus filtration. In particular, $\cd G=2$.
\end{theono}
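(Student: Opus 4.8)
The plan is to deduce the statement from the cohomological criterion of Theorem~\ref{cohomologicalcrit} by exhibiting the decomposition it requires. Write $G = F/R$ for a minimal presentation, $F$ free pro-$p$ of rank $d = \dimfp H^1(G,\F_p)$ and $R = \llangle r\rrangle$; since $G$ is one-relator and $\z(G)$ is finite (so $H^2(G,\F_p)\neq 0$) one has $\dimfp H^2(G,\F_p) = 1$, and one may take $r \in F_{(n)}\setminus F_{(n+1)}$ with $n = \z(G)$, this last point being the theorem of Koch, Vogel and Morishita (\cite{DV2},~\cite{MM2}) that the first non-vanishing Massey products detect the last Zassenhaus step containing $r$. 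Fixing a basis $x_1,\ldots,x_d$ of $F_{(1)}/F_{(2)}$ dual to a basis $\chi_1,\ldots,\chi_d$ of $H^1(G,\F_p)$, one has $\gr\F_p\llbracket F\rrbracket = \F_p\langle X\rangle$ (the graded free associative $\F_p$-algebra on $X = \{x_1,\ldots,x_d\}$), and the cited results identify, under a fixed isomorphism $H^2(G,\F_p)\cong\F_p$, the value $\langle\chi_{i_1},\ldots,\chi_{i_n}\rangle_n$ with $\pm$ the coefficient of the word $x_{i_1}\cdots x_{i_n}$ in the initial form $\rho$ of $r$, a homogeneous element of degree $n$; by multilinearity this determines $\langle\cdot,\ldots,\cdot\rangle_n$ completely.

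At this point I would invoke $p\nmid n$: the restricted $p$-operation on $\gr F$ raises degrees by $p$, so in degree $n$ with $p\nmid n$ there is no $p$-th power contribution and $\gr_n F$ equals the degree-$n$ component $L_n$ of the ordinary free Lie $\F_p$-algebra on $X$. Hence $\rho$ is a nonzero homogeneous Lie element of degree $n\geq 2$. It then suffices to produce a partition $X = X_U \sqcup X_V$ and an integer $e$ with $1 \leq e \leq n-1$ such that (a$'$) every word occurring in $\rho$ has at least $e$ letters from $X_U$, and (b$'$) some word $u_1\cdots u_e v_1\cdots v_{n-e}$ with $u_1,\ldots,u_e \in X_U$ and $v_1,\ldots,v_{n-e}\in X_V$ occurs in $\rho$ with nonzero coefficient; for then $U = \langle\chi_i : x_i \in X_U\rangle$ and $V = \langle\chi_i : x_i \in X_V\rangle$ satisfy condition (b) of Theorem~\ref{cohomologicalcrit} (surjectivity being automatic from (b$'$) since $\dimfp H^2(G,\F_p)=1$) and condition (a) (any word with $\geq n-e+1$ letters from $X_V$ has $\leq e-1$ from $X_U$, hence coefficient $0$).

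The construction of $(X_U, X_V, e)$ is where the real work lies. The idea is to decompose $\rho = \sum_\alpha \rho_\alpha$ into multihomogeneous components, each of which is again a homogeneous Lie element of degree $n\geq 2$ and therefore involves at least two variables (the free Lie algebra on one generator being concentrated in degree $1$); since a word of multidegree $\alpha$ has exactly $\sum_{x_i\in X_U}\alpha_i$ letters from $X_U$, condition (a$'$) holds for free once one sets $e := \min_\alpha \sum_{x_i \in X_U}\alpha_i$, provided this minimum is realized by a component $\rho_{\alpha_0}$ that additionally contains a word with nonzero coefficient all of whose $X_U$-letters precede its $X_V$-letters. My plan is to choose the partition and $\alpha_0$ simultaneously, using a lexicographically extremal (Lyndon) word of $\rho$ to read off a candidate $X_U$ and the value of $e$. \textbf{The hard part will be showing that this can always be arranged} --- i.e.\ that one can simultaneously make some multidegree minimal with respect to $X_U$ and keep a \emph{separated} word available in that component; this calls for a combinatorial study of which words can occur in a homogeneous Lie element, and it is precisely here that genuine Lie-ness of $\rho$, i.e.\ $p\nmid n$, is indispensable: if the relation were essentially a $p$-th power, the initial form would have the shape $x^p$, which admits no separated word and no such decomposition, in line with the $p$-th power case of Serre's question being open.

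Once $(X_U, X_V, e)$ has been produced, the decomposition $H^1(G,\F_p) = U \oplus V$ above verifies conditions (a) and (b) of Theorem~\ref{cohomologicalcrit} with this value of $e$, so $G$ is mild with respect to the Zassenhaus filtration, and in particular $\cd G = 2$.
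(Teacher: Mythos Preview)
Your route differs substantially from the paper's, and as you yourself flag, it is incomplete at the crucial step.

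The paper does not go through Theorem~\ref{cohomologicalcrit} at all. Instead it argues as follows: since $p\nmid n$, the degree-$n$ piece of $\gr F\cong L_{res}(X)$ has a Hall basis consisting of commutators only (no $p$-th powers contribute), so the initial form $\rho$ of $r$ is a genuine Lie polynomial. Then one invokes Labute's theorem (Theorem~\ref{Labuteonerelatorlie} and its Corollary~\ref{onerelatorliecoro}) that \emph{any} single nonzero homogeneous Lie element in $\polyx^\tau$ is strongly free---this is a statement about the Poincar\'e series of $U_{L(X)/(\rho)}$ and requires no combinatorics on the support of $\rho$. Hence $\{\rho\}$ is strongly free and $G$ is mild by definition. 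That is the whole proof.

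Your approach instead tries to manufacture, from the Lie-ness of $\rho$, a decomposition $H^1(G)=U\oplus V$ satisfying conditions (a) and (b) of Theorem~\ref{cohomologicalcrit}. The reduction to the combinatorial conditions (a$'$), (b$'$) is correct, and the observation that a multihomogeneous Lie element of degree $\ge 2$ involves at least two variables (so never contains $x_j^n$) is fine. But the existence of a partition $X_U\sqcup X_V$ and an $e$ for which some \emph{separated} word $u_1\cdots u_e v_1\cdots v_{n-e}$ actually occurs in the component realizing the minimum is exactly the point you leave open. This is not a triviality: it amounts to a structural fact about the support of Lie polynomials (essentially that one can always push the occurrences of a chosen variable to the right without losing the word from the support), and would need something like the shuffle identities of Proposition~\ref{shuffleproperties} or a Lyndon-word argument to justify. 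You gesture at Lyndon words but do not carry it out, so the proposal as written has a genuine gap.

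Even if you did close it, the payoff would be a roundabout reproof of the special case of Corollary~\ref{onerelatorliecoro} needed here, via Theorem~\ref{cohomologicalcrit} and Anick's criterion, rather than via Labute's direct enveloping-algebra computation. The paper's route is both shorter and conceptually cleaner for this particular statement; the cohomological criterion is designed for situations where no single strongly free relation is available and one must instead organise several relations simultaneously.
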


This result is a special case of the more general Theorem \ref{onerelatorgrouptheo}. We deduce that the Zassenhaus invariant of a one-relator pro-$p$-group with cohomological dimension $>2$ is necessarily divisible by $p$. With regard to Serre's question, the following result is of interest:

\begin{theono}[Prop.\ref{onerelatorapprox}]
 Let $G$ be a one-relator pro-$p$-group with generator rank $d=h^1(G)$ and Zassenhaus invariant $\z(G)=p$ and $G=F/(r)$ be a minimal presentation of $G$. Assume that $\cd G > 2$. Then there exists a free basis $x_1,\ldots, x_d$ of $F$ and $y\in F$ such that
\[
 r \equiv x_1^p\modulo F_{(p+1)}.
\]
\end{theono}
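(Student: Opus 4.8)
My plan is the following. Since $\z(G)=p<\infty$, the defining relation of the one-relator group $G=F/(r)$ is nontrivial, and by the identities of Koch, Vogel and Morishita relating Massey products to initial forms its Zassenhaus depth equals $\z(G)$: thus $r\in F_{(p)}\setminus F_{(p+1)}$, and the $p$-fold Massey product of $G$ is --- up to a sign and the choice of a generator $\omega$ of the one-dimensional space $H^2(G,\F_p)$ --- given by the monomial coefficients of the initial form $\rho:=\bar r\in\gr_p F$ read inside the free associative algebra $\F_p\langle X\rangle$. Recall that $\gr F$ is the free restricted Lie $\F_p$-algebra on $\bar X=\{\bar x_1,\dots,\bar x_d\}$, that $\gr_1 F=F/\Phi(F)$, and that $\gr_p F=\mathcal{L}_p\oplus W$, where $\mathcal{L}_p$ is the degree-$p$ component of the free ordinary Lie algebra on $\bar X$ and $W=\langle\bar x_1^{[p]},\dots,\bar x_d^{[p]}\rangle$. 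The key observation is that, identifying the degree-$p$ part of $\F_p\langle X\rangle$ with $(\F_p^d)^{\otimes p}$, the $p$-th power $\sigma^{[p]}=\sigma^p$ of a degree-one element $\sigma=\sum_i a_i\bar x_i$ is just $v^{\otimes p}$ with $v=(a_i)_i$; hence $\rho$ is a $p$-th power in $\gr F$ if and only if, as a tensor, $\rho=v^{\otimes p}$ for some $v\in\F_p^d$ --- equivalently, iff writing $\rho=(\sum_i\mu_i\bar x_i)^{[p]}+\lambda_*$, where $\sum_i\mu_i\bar x_i^{[p]}$ is the $W$-component of $\rho$, the ``excess Lie part'' $\lambda_*\in\mathcal{L}_p$ vanishes.

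The crucial step would be the implication: \emph{if $\rho$ is not a $p$-th power, then $G$ is mild} (hence $\cd G=2$). Assuming $\lambda_*\neq 0$, after a linear change of the generating system of $F$ one may suppose either $\rho=\lambda_*\in\mathcal{L}_p\setminus\{0\}$ or $\rho=\bar x_1^{[p]}+\lambda_*$ with $\lambda_*\in\mathcal{L}_p\setminus\{0\}$ (in the latter case the monomial $x_1^p$ occurs in $\rho$ with coefficient $1$, no monomial of $\mathcal{L}_p$ being of that shape). I would then read off, from the PBW/Lyndon-basis structure of the nonzero Lie element $\lambda_*$, a decomposition $H^1(G,\F_p)=U\oplus V$ --- with $V$ the span of a suitable subset of the dual basis --- together with an integer $e$, $1\le e\le p-1$, essentially the minimal number of $U$-letters occurring in a monomial of $\rho$, such that the $p$-fold Massey product of $G$ fulfils conditions (a) and (b) of Theorem~\ref{cohomologicalcrit}; that theorem then forces $G$ to be mild. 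Since we are assuming $\cd G>2$, $G$ is not mild, and so $\rho$ must be a $p$-th power.

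Consequently $\rho=\sigma^{[p]}$ for some $\sigma\in\gr_1 F$, necessarily $\sigma\neq 0$ and of degree one (as $\deg\rho=p$). Since $\gr_1 F=F/\Phi(F)$ is the Frattini quotient of $F$, I would complete $\sigma$ to an $\F_p$-basis $\sigma=\bar x_1,\bar x_2,\dots,\bar x_d$ of $\gr_1 F$ and lift it to elements $x_1,\dots,x_d\in F$; by the Burnside basis theorem for pro-$p$-groups these form a free generating system of $F$. With respect to this basis, the restricted $p$-operation on the Zassenhaus graded algebra gives $\bar r=\rho=\sigma^{[p]}=\bar x_1^{[p]}=\overline{x_1^p}$ in $\gr_p F=F_{(p)}/F_{(p+1)}$, i.e.\ $r\,x_1^{-p}\in F_{(p+1)}$; setting $y:=r\,x_1^{-p}\in F_{(p+1)}\subseteq F$ one obtains $r\equiv x_1^p\modulo F_{(p+1)}$, as asserted.

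The main obstacle is the middle step: deducing the $p$-th-power shape of the initial form from the mere failure of mildness. This amounts to the multilinear statement that a nonzero primitive element of $\F_p\langle X\rangle$ of degree $p$ which is not a pure power $v^{\otimes p}$ always admits a decomposition of the generating system realising hypotheses (a)--(b) of the generalized cup-product criterion Theorem~\ref{cohomologicalcrit}; this is in essence the content of the more general Theorem~\ref{onerelatorgrouptheo}, of which the present proposition is a by-product. (For $p=2$, i.e.\ $\z(G)=2$, it reduces to a classical linear-algebra fact: a nonzero symmetric bilinear form on $H^1(G,\F_2)$, together with its Bockstein refinement, admits an ``isotropic'' decomposition as in (i)--(ii) unless the form has rank one, equivalently unless $\rho=\sigma^{[2]}=\bar x_1^{[2]}$ after base change.)
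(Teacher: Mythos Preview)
Your overall architecture is right: reduce to showing that if the initial form $\rho\in\gr_p F$ is not a pure $p$-th power then $G$ is mild, and then change basis so that $\rho=\bar x_1^{[p]}$. The final paragraph is fine. The problem is the middle step, which you yourself flag as the ``main obstacle'' and then do not carry out.

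You propose to deduce mildness from $\lambda_*\neq 0$ by finding a decomposition $H^1(G)=U\oplus V$ and an $e$ satisfying conditions (a),(b) of Theorem~\ref{cohomologicalcrit}. This does not work in general. Take $p=3$, $d=2$ and $\rho=X_1^3+[[X_1,X_2],X_1]$. Since $B_3(\chi_1)\neq 0$ and $B_3(\chi_2)=0$, condition (a) forces $V=\langle\chi_2\rangle$; but then for $e=1$ one has $\langle\chi_1,\chi_2,\chi_2\rangle_3=0$ (the coefficient of $X_1X_2^2$ vanishes), so (b) fails, while for $e=2$ condition (a) already fails because $\langle\chi_2,\chi_2,\chi_1\rangle_3\neq 0$. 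No choice of complement $U$ helps, and $\ker B_3$ is intrinsic, so no base change rescues this. Yet this $G$ \emph{is} mild. Also, note that Theorem~\ref{onerelatorgrouptheo} is not proved via Theorem~\ref{cohomologicalcrit}; it rests on Labute's one-relator Lie algebra theorem (\ref{Labuteonerelatorlie}), so your appeal to it as ``in essence'' a consequence of the Massey criterion is a misreading.

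The paper's argument is different and avoids this obstruction entirely. After arranging (via $B_p$, essentially your $W$-projection) that $\rho=X_1^p+C$ with $C$ a nonzero sum of Hall commutators of weight $p$, one passes to a \emph{weighted} Zassenhaus $(x,\tau)$-filtration with $\tau_1=a>\tau_2=\cdots=\tau_d=b$ and $b<a<\tfrac{bp}{p-1}$. Any weight-$p$ Hall commutator involves $x_1$ at most $p-1$ times, so $\omega_\tau(c)\le a(p-1)+b<pa=\omega_\tau(x_1^p)$, and also $\omega_\tau(r')\ge (p+1)b>a(p-1)+b$ for the tail $r'\in F_{(p+1)}$. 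Hence the $\tau$-initial form of $r$ is a nonzero \emph{Lie} polynomial, and Theorem~\ref{onerelatorgrouptheo}(i) (i.e.\ Corollary~\ref{onerelatorliecoro}) gives mildness with respect to this weighted filtration, contradicting $\cd G>2$. The weighted filtration is precisely the device that separates the $p$-th power from the commutator part; your attempt to do this separation cohomologically via Theorem~\ref{cohomologicalcrit} is the missing idea.
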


Important examples of one-relator pro-$p$-groups are given by the class of \f{Demu\v{s}kin groups}, i.e.\ one-relator pro-$p$-groups $G$ with non-degenerate cup-product $H^1(G,\F_p)\times H^1(G,\F_p)\to H^2(G,\F_p)$. These groups are duality groups of dimension $2$. We introduce the a more general notion of groups of \f{Demu\v{s}kin type} with higher Zassenhaus invariants and prove that they are either finite or mild.

The article is structered as follows: In the second section we investigate the notion of mild pro-$p$-groups with respect to weighted Zassenhaus filtrations. This is basically contained in \cite{JL}, however we obtain more explicit results on the structure of the graded restricted Lie algebras $\gr G$. In the third section we state Anick's criterion for combinatorially free sequences of monomials in non-commutative polynomial algebras and construct a special class of multiplicative monomial orders which will be crucial for the proof of Theorem \ref{cohomologicalcrit}. In section 4 we recall the notion and basic properties of higher Massey products in group cohomology and prove Theorem \ref{cohomologicalcrit}. Furthermore, we give arithmetic examples of mild pro-$2$-groups with Zassenhaus invariant $3$. The fifth section is devoted to the study of one-relator pro-$p$-groups. We investigate the structure of the graded restricted Lie algebras $\gr^\tau F$ for free pro-$p$-groups $F$ which is crucial for the proofs of the main results \ref{onerelatorgrouptheo} and \ref{onerelatorapprox}. In the last section we introduce the notion of groups of Demu\v{s}kin type and apply \ref{cohomologicalcrit} to show that they are either finite or mild.

\bfseries Acknowledgements: \mdseries Parts of the results in this article are contained in the author's PhD thesis. Hearty thanks go to Kay Wingberg for his guidance and great support and to Alexander Schmidt and Denis Vogel for helpful discussions on the subject.

\section{Mild pro-$p$-groups with respect to weighted Zassenhaus filtrations}

Let $p$ be a prime number and $G$ be a pro-$p$-group. By $\Omega_G$ we denote the complete group algebra
\[
 \Omega_G=\grg=\prlim{U} \F_p[G/U]
\]
 with coefficients in $\F_p$, where $U$ runs through the open normal subgroups of $G$. By $I_G\subseteq\Omega_G$ we denote the \f{augmentation ideal} of $G$, i.e.\ the kernel of the canonical \f{augmentation map}
\[
 \begin{tikzpicture}[description/.style={fill=white,inner sep=2pt}, bij/.style={above,sloped,inner sep=1.5pt}, column 1/.style={anchor=base east},column 2/.style={anchor=base west}]
 \matrix (m) [matrix of math nodes, row sep=0em,
 column sep=2.5em, text height=1.5ex, text depth=0.25ex]
 {\Omega_G & \F_p,\\
  g & 1,& g\in G.\\
};
 \path[->>,font=\scriptsize]
 (m-1-1) edge node[auto] {} (m-1-2);
 \path[|->,font=\scriptsize]
 (m-2-1) edge node[auto] {} (m-2-2);
 \end{tikzpicture}
\]

The \f{Zassenhaus filtration} $(G_{(n)})_{n\in\N}$ of $G$ is defined by
\[
 G_{(n)}=\{g\in G\ |\ g-1\in I_G^n\}.
\]
If $G$ is finitely generated as pro-$p$-group, then $G_{(n)}, n\in\N$ is a system of neighborhoods $1\in G$ consisting of open normal subgroups. The Zassenhaus filtration is a \f{$p$-restricted filtration} of $G$, i.e.\ it satisfies
\[
 [G_{(i)},G_{(j)}]\subseteq G_{(i+j)},\quad (G_{(i)})^p\subseteq G_{(pi)}.
\]
Hence, the associated graded object
\[
 \gr G=\bigoplus_{n\ge 1} G_{(n)}/G_{(n+1)}
\]
carries the structure of a \f{graded restricted Lie algebra} over $\F_p$ in the sense of Jacobson (cf.\ \cite{NJ}, Ch.V.7) where the Lie bracket is induced by taking commutators and the $p$-th power operation is induced by taking $p$-th powers in $G$. Before proceeding, we quickly collect some basic facts on ideals and quotients of restricted Lie algebras defined over a field $k$ of characteristic $p$. Homomorphisms and ideals of restricted Lie algebras are defined in the natural way. Every restricted Lie algebra $L$ over $k$ possesses an associative \f{universal enveloping algebra} $U_L$ together with an injective mapping $L\hookrightarrow U_L$ such that the functor $L\longmapsto U_L$ is left adjoint to the canonical functor $\{\mbox{algebras over}\ k\}\longrightarrow \{\mbox{restricted Lie algebras over}\ k\}$, cf.\ \cite{NJ}, Ch.V, Th.12. We need the following fact on universal enveloping algebras of quotients:

\begin{prop}
\label{universalquot}
Let $L$ be a restricted Lie algebra over a field $k$ of characteristic $p$ and let $\mf{r}\subseteq L$ be an ideal. Let $\mf{R}$ denote the left ideal of $U_L$ generated by the image of $\mf{r}$ under the embedding $L\hookrightarrow U_L$. Then $\mf{R}$ is a two-sided ideal and the canonical surjection $\xyalign\xymatrix{L\ar@{->>}[r] & L/\mf{r}}$ induces an exact sequence
\[
  \begin{tikzpicture}[description/.style={fill=white,inner sep=2pt}, bij/.style={above,sloped,inner sep=1.5pt}, column 1/.style={anchor=base east},column 2/.style={anchor=base west}]
 \matrix (m) [matrix of math nodes, row sep=1.5em,
 column sep=2.5em, text height=1.5ex, text depth=0.25ex]
 { 0 & \mf{R} & U_L & U_{L/\mf{r}} & 0.\\
};
 \path[->,font=\scriptsize]
(m-1-1) edge node[auto] {} (m-1-2)
(m-1-2) edge node[auto] {} (m-1-3)
(m-1-3) edge node[auto] {} (m-1-4)
(m-1-4) edge node[auto] {} (m-1-5);
 \end{tikzpicture}
\]
\end{prop}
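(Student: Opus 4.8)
The plan is to argue in two stages: first that $\mf{R}$ is a two-sided ideal, and then that the quotient $U_L/\mf{R}$ satisfies the universal property characterising $U_{L/\mf{r}}$, which immediately yields the exact sequence.

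For the first stage I would use that $U_L$ is generated as a $k$-algebra by the image of $L$ and that for $x,y\in L$ the commutator $xy-yx$ computed in $U_L$ coincides with the image of the Lie bracket $[x,y]$. Since $\mf{r}$ is a (restricted) ideal of $L$, for $x\in\mf{r}$ and $y\in L$ one has $xy=yx+[x,y]$ with $yx\in\mf{R}$ and $[x,y]\in\mf{r}\subseteq\mf{R}$, so $\mf{R}\,y\subseteq\mf{R}$; iterating along a product of elements of $L$ gives $\mf{R}\,v\subseteq\mf{R}$ for every $v\in U_L$, hence $\mf{R}$ is two-sided.

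For the second stage, functoriality of $L\longmapsto U_L$ produces an algebra homomorphism $U_\pi\colon U_L\to U_{L/\mf{r}}$ induced by the projection $\pi\colon L\surj L/\mf{r}$. It is surjective since $U_{L/\mf{r}}$ is generated by the image of $L/\mf{r}$, and it annihilates $\mf{R}$ since it annihilates $\mf{r}$; thus it factors through a surjection $\overline{U_\pi}\colon U_L/\mf{R}\to U_{L/\mf{r}}$, and the whole point is to show $\overline{U_\pi}$ is injective. To build an inverse, observe that the composite $L\hookrightarrow U_L\surj U_L/\mf{R}$ kills $\mf{r}$ and respects brackets and $p$-th powers, so it factors through a homomorphism of restricted Lie algebras $L/\mf{r}\to U_L/\mf{R}$ into the restricted Lie algebra underlying the associative algebra $U_L/\mf{R}$. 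By the universal property of $U_{L/\mf{r}}$ this extends to an algebra homomorphism $\psi\colon U_{L/\mf{r}}\to U_L/\mf{R}$. Since $\overline{U_\pi}$ and $\psi$ restrict to mutually inverse maps on the generating set given by the image of $L/\mf{r}$, they are mutually inverse algebra isomorphisms; hence $\ker U_\pi=\mf{R}$ and the sequence $0\to\mf{R}\to U_L\to U_{L/\mf{r}}\to 0$ is exact.

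Everything except the inclusion $\ker U_\pi\subseteq\mf{R}$ is formal, and that is the step requiring the construction of $\psi$; I expect it to be the only real obstacle. Alternatively one can sidestep it using the Poincaré--Birkhoff--Witt theorem for restricted Lie algebras (\cite{NJ}, Ch.~V): picking an ordered $k$-basis of $L$ that extends an ordered basis of $\mf{r}$, the restricted PBW monomials involving no basis vector of $\mf{r}$ are mapped by $U_\pi$ onto a PBW basis of $U_{L/\mf{r}}$, while those involving at least one such basis vector span $\mf{R}$, so that $U_L=\mf{R}\oplus W$ as $k$-vector spaces with $W$ mapping isomorphically onto $U_{L/\mf{r}}$; I would present the universal-property argument as the main line and relegate the PBW computation to a remark.
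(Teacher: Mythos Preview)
Your proof is correct and is precisely the argument the paper has in mind: the paper does not spell out a proof but refers to Bourbaki's proof for ordinary Lie algebras (\cite{NB}, Ch.~I, §2.3, Prop.~3) and remarks that since the restricted enveloping algebra satisfies the analogous universal property, the same argument goes through; your two-stage universal-property argument is exactly that transfer. The PBW alternative you mention is a fine remark but, as you say, unnecessary for the main line.
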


For the analogous statement for (ordinary) Lie algebras, see \cite{NB}, Ch.I, §2.3, Prop.3. Since the universal enveloping algebra of a restricted Lie algebra satisfies the analogous universal property as for an (ordinary) Lie algebra, the proof carries over immediately. 

For finitely generated free pro-$p$-groups we can define a more general class of Zassenhaus filtrations by associating weights to a fixed system of generators. Let $F$ be the free pro-$p$-group on the set $x=\{x_1,\ldots, x_d\}$. Denoting by $\power$ the algebra of formal power series in the non-commuting indeterminates $X=\{X_1,\ldots, X_d\}$ over $\F_p$, there is a topological isomorphism
 \[
 \begin{tikzpicture}[description/.style={fill=white,inner sep=2pt}, bij/.style={above,sloped,inner sep=1.5pt}, column 1/.style={anchor=base east},column 2/.style={anchor=base west}]
 \matrix (m) [matrix of math nodes, row sep=0em,
 column sep=2.5em, text height=1.5ex, text depth=0.25ex]
 {\Omega_F  & \power,\\
 x_i &1+X_i,\\
};
 \path[->,font=\scriptsize]
 (m-1-1) edge node[bij] {$\sim$} (m-1-2);
 \path[|->,font=\scriptsize]
 (m-2-1) edge node[auto] {} (m-2-2);
 \end{tikzpicture}
\]
mapping the augmentation ideal $I_F$ onto the (two-sided) ideal 
\[
I(X)=\langle X_1, \ldots, X_d\rangle = \bigoplus_{i=1}^d \power \cdot X_i.
\]

Keeping this notation, we can make the following

\begin{defi}
\label{xtaudefinition} 
Let $\tau=(\tau_1,\ldots,\tau_d)$ be a sequence of integers $\tau_i>0$. We define the valuation $\nu_\tau$ of $\power$ by
\[
 \nu_\tau(\sum_{i_1,\ldots, i_k} a_{i_1,\ldots, i_k} X_{i_1}\cdots X_{i_k}) = \inf_{\stackrels{i_1,\ldots, i_k}{a_{i_1,\ldots, i_k}\neq 0}} (\tau_{i_1}+\ldots+\tau_{i_k})
\]
and set $\power^\tau_n=\{h\in\power\ |\ \nu_\tau(h)\ge n\},\ n\ge 0$. The $p$-restricted filtration $(F_ {(\tau,n)})_{n\ge 1}$ of $F$ induced by $\nu_\tau$, i.e.\ the filtration given by the subgroups
\[
 F_{(\tau,n)}=\{f\in F| f-1\in \power^\tau_n\},\ n\ge 1,
\]
where we make the identification $\Omega_F\cong\power$ as above, is called \f{Zassenhaus $(x,\tau)$-filtration} of $F$. We write $\gr^\tau F = \bigoplus_{n\ge 1} F_{(\tau,n)}/F_{(\tau,n+1)}$ for the associated graded restricted Lie algebra.
\end{defi}                            

Note that unless $\tau_1=\tau_2=\dots=\tau_d$, the Zassenhaus $(x,\tau)$-filtration depends on the choice of the generating set for $F$. Also note that if $\tau_i=1,\ i=1,\ldots, d$, we obtain the usual Zassenhaus filtration of $F$. 

We denote by $\gr^\tau\power$ the graded $\F_p$-algebra with respect to the filtration $(\power^\tau_n)$, i.e.
\[
 \gr^\tau\power=\bigoplus_{n\ge 0} \power^\tau_n/\power^\tau_{n+1}.
\]
If $\polyx^\tau$ denotes the free associative algebra over $\F_p$ on $X$ (i.e.\ the polynomial algebra in the non-commuting variables $X=\{X_1,\ldots, X_d\}$) endowed with the grading given by $\deg X_i=\tau_i$, there is a canonical isomorphism
\[
 \begin{tikzpicture}[description/.style={fill=white,inner sep=2pt}, bij/.style={above,sloped,inner sep=1.5pt}, column 1/.style={anchor=base east},column 2/.style={anchor=base west}]
 \matrix (m) [matrix of math nodes, row sep=0em,
 column sep=2.5em, text height=1.5ex, text depth=0.25ex]
 {\gr^\tau\power  & \polyx^\tau\\
 \xi_i &X_i\\
};
 \path[->,font=\scriptsize]
 (m-1-1) edge node[bij] {$\sim$} (m-1-2);
 \path[|->,font=\scriptsize]
 (m-2-1) edge node[auto] {} (m-2-2);
 \end{tikzpicture}
\]
of graded $\F_p$-algebras, where $\xi_i\in \power^\tau_{\tau_i}/\power^\tau_{\tau_i+1}$ is the initial form of $X_i\in \power$. Making the identification $\Omega_F\cong \power$, the map $F\hookrightarrow\Omega_F$, $f\mapsto f-1$ induces an embedding of graded restricted Lie algebras
\[
\begin{tikzpicture}[description/.style={fill=white,inner sep=2pt}, bij/.style={above,sloped,inner sep=1.5pt}]
\matrix (m) [matrix of math nodes, row sep=3em,
column sep=2.5em, text height=1.5ex, text depth=0.25ex]
{ \phi_F: \gr^\tau F & \polyx^\tau,\\};
\path[right hook->,font=\scriptsize]
(m-1-1) edge node[auto] {} (m-1-2);
\end{tikzpicture}
\]
mapping the initial form of $x_i$ to $X_i$.

\begin{rema}
 Via $\phi_F$, $\gr^\tau F$ is mapped onto the restricted Lie subalgebra of $\polyx^\tau$ generated by $X_1, \ldots, X_d$, which is the \it free \rm restricted $\F_p$-Lie algebra on the set $X$. Furthermore, $\polyx^\tau$ is its universal enveloping algebra. This yields explicit bases for the $\F_p$-vector spaces $F_{(\tau,n)}/F_{(\tau,n+1)}$. However, we will not make use of this fact until section 5, cf.\ \ref{grftheo}.
\end{rema}

In order to define mild pro-$p$-groups, we need to notion of so-called \f{strongly free sequences} in the free algebra $\polyx^\tau$. We fix the $(X,\tau)$-grading for some \linebreak $\tau=(\tau_1,\ldots, \tau_d),\ \tau_i\ge 1$, in particular, \it homogeneous \rm elements will always be homogeneous with respect to this grading.

\begin{defi}
\label{poincareseriesdefinition}
Let $H$ be a \f{locally finite} graded algebra (Lie algebra) over an arbitrary field $k$, i.e.\ $H=\oplus_{n=0}^\infty H_n$ is a graded $k$-algebra (Lie algebra) such that $H_n$ is of finite dimension as $k$-vector space for all $n$. Then the \f{Poincaré series} (or \f{Hilbert series}) $H(t)\in\Z\llbracket t \rrbracket$ of $H$ is the formal power series
\[
 H(t) = \sum_{n=0}^\infty (\dim_k H_n) t^n.
\]
 It is additive in exact sequences, i.e.\ if
\[
\begin{tikzpicture}[description/.style={fill=white,inner sep=2pt}, bij/.style={below,sloped,inner sep=1.5pt}]
\matrix (m) [matrix of math nodes, row sep=1.5em,
column sep=2.5em, text height=1.5ex, text depth=0.25ex]
{  0 & H' & H & H'' & 0\\};
\path[->,font=\scriptsize]
(m-1-1) edge node[auto] {} (m-1-2)
(m-1-2) edge node[auto] {} (m-1-3)
(m-1-3) edge node[auto] {} (m-1-4)
(m-1-4) edge node[auto] {} (m-1-5);
\end{tikzpicture}
\]
is an exact sequence of graded algebras (Lie algebras) over $k$, then
\[
 H(t)=H'(t)+H''(t).
\]
\end{defi}

We define a total ordering on $\Z\llbracket t\rrbracket$ by setting $f(t)>g(t)$ if the first non-zero coefficient of $f(t)-g(t)$ is positive. This ordering satisfies the usual compatibility properties with respect to addition and multiplication in $\Z\llbracket t\rrbracket$.

For the sake of brevity, in all that follows we set $A=\polyx^\tau$ and denote by $I_A=\langle X_1,\ldots, X_d\rangle=\{f\in\polyx^\tau|\ \deg^\tau(f)>0\}$ the augmentation ideal of $A$.

\pagebreak

\begin{lemm}
\label{poincareestimate}
 Let $\rho_1,\ldots, \rho_m\in I_A$ be homogeneous elements of degree $\sigma_i=\deg^\tau \rho_i,\ i=1,\ldots, m$,
\[
 \mc{R}= (\rho_1,\ldots, \rho_m)
\]
be the two-sided ideal of $A$ generated by the $\rho_i$ and $B=A/\mc{R}$ be the quotient algebra. We endow $B$ with the natural induced grading. Then the Poincaré series $B(t)$ satisfies
\[
 B(t)\ge \frac{A(t)}{1+A(t)(t^{\sigma_1}+\ldots+t^{\sigma_m})} = \frac{1}{1-(t^{\tau_1}+\ldots+t^{\tau_d}) + (t^{\sigma_1}+\ldots+t^{\sigma_m})}.
\]
\end{lemm}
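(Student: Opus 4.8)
The plan is to build a minimal free resolution of the trivial module $\F_p$ over $B$ in low homological degrees and read off the inequality from the additivity of Poincaré series, exactly as in Anick's and Labute's treatment of strongly free sequences. First I would record the standard free resolution of $\F_p$ over $A=\polyx^\tau$: since $A$ is a free associative algebra, the augmentation ideal $I_A=\bigoplus_i A\cdot X_i$ is free as a left $A$-module on $X_1,\dots,X_d$, so one has the exact sequence of graded left $A$-modules
\[
0\longrightarrow \bigoplus_{i=1}^d A(-\tau_i)\longrightarrow A\longrightarrow \F_p\longrightarrow 0,
\]
which gives $A(t)^{-1}=1-(t^{\tau_1}+\dots+t^{\tau_d})$ and hence the right-hand equality in the statement once the first inequality is established.

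Next I would pass to $B=A/\mc R$. Tensoring, or more concretely considering the two-sided ideal $\mc R=(\rho_1,\dots,\rho_m)$, one gets a presentation of $\mc R$ as a left $B$-module: the elements $\rho_j$, of degree $\sigma_j$, generate $\mc R$, so there is a graded surjection $\bigoplus_{j=1}^m B(-\sigma_j)\surj \mc R/\mc R I_A$ (the minimal generators of $\mc R$ as a left ideal, modulo the radical), and combining this with the exact sequences
\[
0\longrightarrow \mc R\longrightarrow A\longrightarrow B\longrightarrow 0,\qquad
0\longrightarrow \ker\longrightarrow \textstyle\bigoplus_{j=1}^m B(-\sigma_j)\longrightarrow B\otimes_A \mc R\longrightarrow 0
\]
one assembles the beginning of a minimal free resolution of $\F_p$ over $B$:
\[
\textstyle\bigoplus_{j=1}^m B(-\sigma_j)\longrightarrow \bigoplus_{i=1}^d B(-\tau_i)\longrightarrow B\longrightarrow \F_p\longrightarrow 0.
\]
Here the key point is that $B\otimes_A I_A\to I_B$ and the identification $B\otimes_A(A/I_A)=\F_p$ let one splice the $A$-resolution of $\F_p$ with the $B$-resolution of $\mc R$; the map $\bigoplus B(-\sigma_j)\to\bigoplus B(-\tau_i)$ is obtained from writing each $\rho_j$ in the form $\sum_i a_{ji}X_i$ with $a_{ji}\in A$ and reducing mod $\mc R$.

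Applying additivity of the Poincaré series to the (possibly non-exact, but with controlled homology) complex, or rather to the genuine exact sequences above, yields
\[
B(t)\cdot\big(1-(t^{\tau_1}+\dots+t^{\tau_d})+(t^{\sigma_1}+\dots+t^{\sigma_m})\big)\ \ge\ 1,
\]
with equality precisely when the displayed complex is exact (i.e.\ the $\rho_j$ form a strongly free sequence). Rearranging and using the total ordering on $\Z\llbracket t\rrbracket$, which is compatible with multiplication by the unit $B(t)$ (whose leading coefficient is $1$), gives the claimed bound. The main obstacle is making the homological bookkeeping precise: one must check that the cokernel/kernel terms one discards contribute a power series with nonnegative coefficients, so that each splicing step only weakens the equality into the asserted inequality; this is where the "$\ge$" (rather than "$=$") genuinely comes from, and it is the analogue of the classical fact that truncating a minimal resolution overestimates the Hilbert series of the next syzygy module. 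I would handle this by an explicit dimension count in each degree $n$, comparing $\dim_{\F_p}B_n$ with $\sum_i\dim_{\F_p}B_{n-\tau_i}-\sum_j\dim_{\F_p}B_{n-\sigma_j}+\delta_{n,0}$ and observing the difference is the dimension of a quotient space, hence $\ge 0$.
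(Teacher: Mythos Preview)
Your approach is essentially the same as the paper's: both construct the partial complex
\[
\textstyle\bigoplus_{j=1}^m B[\sigma_j]\longrightarrow \bigoplus_{i=1}^d B[\tau_i]\longrightarrow B\longrightarrow \F_p\longrightarrow 0,
\]
exact everywhere except possibly at the leftmost term, and read off the inequality from the alternating sum of Poincar\'e series. The paper obtains this complex more directly, by dividing the augmentation sequence $0\to I_A\to A\to\F_p\to 0$ by $\mc{R}I_A$ to get $0\to\mc{R}/\mc{R}I_A\to I_A/\mc{R}I_A\to B\to\F_p\to 0$, then identifying $I_A/\mc{R}I_A\cong\bigoplus_i B[\tau_i]$ (freeness of $I_A$ over $A$) and surjecting $\bigoplus_j B[\sigma_j]\surj\mc{R}/\mc{R}I_A$; your tensor-product language amounts to the same thing, though note that $B\otimes_A\mc{R}=\mc{R}/\mc{R}^2$, not $\mc{R}/\mc{R}I_A$, so that part of your write-up needs tidying. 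Also, in the final step you want to multiply by the unit $P(t)^{-1}$ (where $P(t)=1-\sum t^{\tau_i}+\sum t^{\sigma_j}$), not by $B(t)$; this is legitimate because $P(t)^{-1}$ has constant term $1$, hence positive leading coefficient, and multiplication by such a series preserves the order on $\Z\llbracket t\rrbracket$ even when later coefficients are negative.
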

\begin{proof}
 We follow the arguments given by J.\ Labute in \cite{JL}, Prop.3.2. Since $\mc{R}\subseteq I_A$, the augmentation sequence
\[
 \begin{tikzpicture}[description/.style={fill=white,inner sep=2pt}, bij/.style={above,sloped,inner sep=1.5pt}]
\matrix (m) [matrix of math nodes, row sep=3em,
column sep=2.5em, text height=1.5ex, text depth=0.25ex]
{ 0 & I_A & A & \F_p & 0\\};
\path[->,font=\scriptsize]
(m-1-1) edge node[auto] {} (m-1-2)
(m-1-2) edge node[auto] {} (m-1-3)
(m-1-3) edge node[auto] {} (m-1-4)
(m-1-4) edge node[auto] {} (m-1-5);
\end{tikzpicture}
\]
gives rise to the exact sequence
\[
 \begin{tikzpicture}[description/.style={fill=white,inner sep=2pt}, bij/.style={above,sloped,inner sep=1.5pt}]
\matrix (m) [matrix of math nodes, row sep=3em,
column sep=2.5em, text height=1.5ex, text depth=0.25ex]
{ 0 & \mc{R}/\mc{R}I_A & I_A/\mc{R}I_A & B & \F_p & 0.\\};
\path[->,font=\scriptsize]
(m-1-1) edge node[auto] {} (m-1-2)
(m-1-2) edge node[auto] {} (m-1-3)
(m-1-3) edge node[auto] {} (m-1-4)
(m-1-4) edge node[auto] {} (m-1-5)
(m-1-5) edge node[auto] {} (m-1-6);
\end{tikzpicture}
\]
By left multiplication, the quotient algebra $I_A/\mc{R}I_A$ is a free $B$-module over the images of $X_1, \ldots, X_d$. Furthermore, $\mc{R}/\mc{R}I_A$ is a left $B$-module generated by $\rho_1, \ldots, \rho_m$ using the fact that $R A = R (\F_p \oplus I_A) =  R\F_p \oplus R I_A$ where $R$ denotes the $\F_p$-span of $\rho_1,\ldots, \rho_m$ in $A$. Hence, one has a surjective map of graded $\F_p$-vector spaces
\[
 \begin{tikzpicture}[description/.style={fill=white,inner sep=2pt}, bij/.style={above,sloped,inner sep=1.5pt}]
\matrix (m) [matrix of math nodes, row sep=3em,
column sep=2.5em, text height=1.5ex, text depth=0.25ex]
{ \bigoplus_{i=1}^m B[\sigma_i] & \mc{R}/\mc{R}I_A\\};
\path[->>,font=\scriptsize]
(m-1-1) edge node[auto] {} (m-1-2);
\end{tikzpicture}
\]
where for $l\in\N_0$ by $B[l]$ we denote the $\F_p$-vector space $B$ with grading shifted by $l$, i.e.\ $B[l](t) = t^l B(t)$.
Summing up, we get an exact sequence of graded $\F_p$-vector spaces
\[
 \begin{tikzpicture}[description/.style={fill=white,inner sep=2pt}, bij/.style={above,sloped,inner sep=1.5pt}]
\matrix (m) [matrix of math nodes, row sep=3em,
column sep=2.5em, text height=1.5ex, text depth=0.25ex]
{ \bigoplus_{i=1}^m B[\sigma_i] & \bigoplus_{i=1}^d B[\tau_i] & B & \F_p & 0.\\};
\path[->,font=\scriptsize]
(m-1-1) edge node[auto] {} (m-1-2)
(m-1-2) edge node[auto] {} (m-1-3)
(m-1-3) edge node[auto] {} (m-1-4)
(m-1-4) edge node[auto] {} (m-1-5);
\end{tikzpicture}
\]
 Taking Poincaré series, we obtain
\[
 -(t^{\sigma_1}+\ldots + t^{\sigma_m})B(t) + (t^{\tau_1}+\ldots+t^{\tau_d}) B(t) - B(t) + 1 \le 0 
\]
and solving for $B(t)$, this gives the desired inequality.
\end{proof}

\begin{defi}
\label{stronglyfreedefi}
Keeping the notation of \ref{poincareestimate}, the sequence of homogeneous elements $\rho=\{\rho_1,\ldots, \rho_m\}\subset I_A$ is called \f{strongly free} if the inequality of Poincaré series given in \ref{poincareestimate} is an equality, i.e.\ if
\begin{eqnarray*}
 B(t) = \frac{1}{1-(t^{\tau_1}+\ldots+t^{\tau_d}) + (t^{\sigma_1}+\ldots+t^{\sigma_m})}.
\end{eqnarray*}
Furthermore, we also say that the empty sequence $\rho=\emptyset$ is strongly free, in which case $B=A$ and hence
\[
 B(t)= \frac{1}{1-(t^{\tau_1}+\ldots+t^{\tau_d})}.
\]
\end{defi}
\pagebreak

\begin{rema}
\label{stronglyfreedefiremark}\quad
\begin{itemize}
 \item[\rm (i)] Note that in the above definition, by a slight abuse of notation, $\rho=\{\rho_1,\ldots, \rho_m\}$ is considered as the \it sequence \rm (and not just as the \it subset\rm) consisting of $\rho_1,\ldots, \rho_m$. For instance, the sequence $\rho=\{X_1\}$ is strongly free, whereas the sequence $\rho=\{X_1,X_1\}$ is not. However, obviously the strong freeness of $\rho$ does not depend on the ordering of the $\rho_i$.
 \item[\rm (ii)] In general the series on the right hand side of the inequality in \ref{stronglyfreedefi} does not have positive coefficients which is of course a necessary condition for the existence of a strongly free sequence with given degrees.
\end{itemize}
\end{rema}

\begin{prop}
\label{strfreeconditions}
 Let $\rho_1,\ldots, \rho_m\subset I_A$ be homogeneous elements. Then the following statements are equivalent:
\begin{itemize}
 \item[\rm (i)] $\rho_1, \ldots, \rho_m$ is a strongly free sequence.
 \item[\rm (ii)] The left $B$-module $\mc{R}/\mc{R}I_A$ is free over the images of $\rho_1, \ldots, \rho_m$.
 \item[\rm (iii)] The subalgebra $\mf{R}\subseteq A$ generated by $\rho_1,\ldots, \rho_m$ is free over $\rho_1,\ldots, \rho_m$ and there is an isomorphism of graded $\F_p$-vector spaces
\[
\begin{tikzpicture}[description/.style={fill=white,inner sep=2pt}, bij/.style={above,sloped,inner sep=1.5pt}]
\matrix (m) [matrix of math nodes, row sep=3em,
column sep=2.5em, text height=1.5ex, text depth=0.25ex]
{ \mf{R}\amalg (A/\mc{R}) & A\\};
\path[->,font=\scriptsize]
(m-1-1) edge node[bij] {$\sim$} (m-1-2);
\end{tikzpicture}
\]
where $\amalg$ denotes the coproduct in the category of connected, locally finite graded algebras over $\F_p$.
\end{itemize}
\end{prop}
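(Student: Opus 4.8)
The plan is to establish the cycle of implications (i) $\Rightarrow$ (ii) $\Rightarrow$ (iii) $\Rightarrow$ (i), reusing the constructions from the proof of Lemma~\ref{poincareestimate}.

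\emph{(i) $\Leftrightarrow$ (ii).} One first observes that this is already implicit in the proof of \ref{poincareestimate}. There we produced a surjection of graded $\F_p$-vector spaces $\bigoplus_{i=1}^m B[\sigma_i]\twoheadrightarrow\mc{R}/\mc{R}I_A$ and, composing it with the inclusion $\mc{R}/\mc{R}I_A\hookrightarrow I_A/\mc{R}I_A\cong\bigoplus_{i=1}^d B[\tau_i]$, an exact sequence
\[
\bigoplus_{i=1}^m B[\sigma_i]\longrightarrow\bigoplus_{i=1}^d B[\tau_i]\longrightarrow B\longrightarrow\F_p\longrightarrow 0.
\]
Letting $K$ denote the kernel of the leftmost map, adjunction of $0\to K$ on the left produces an exact sequence, and additivity of Poincaré series yields
\[
B(t)\bigl(1-(t^{\tau_1}+\cdots+t^{\tau_d})+(t^{\sigma_1}+\cdots+t^{\sigma_m})\bigr)=1+K(t).
\]
Since $B$ is connected and locally finite, $K(t)=0$ if and only if $K=0$, that is, if and only if the leftmost map is injective; as $\bigoplus_i B[\sigma_i]\to\mc{R}/\mc{R}I_A$ is already surjective, this means precisely that $\mc{R}/\mc{R}I_A$ is free as a left $B$-module over the images of $\rho_1,\ldots,\rho_m$. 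On the other hand $K(t)=0$ is exactly the equality defining strong freeness, so (i) $\Leftrightarrow$ (ii).

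For the remaining implications I would first record two facts. For connected, locally finite graded $\F_p$-algebras $C,D$ the coproduct satisfies $(C\amalg D)(t)^{-1}=C(t)^{-1}+D(t)^{-1}-1$, and between locally finite graded $\F_p$-vector spaces an isomorphism exists if and only if the Poincaré series agree. Moreover, writing $T=\F_p\langle Y_1,\ldots,Y_m\rangle$ for the free associative algebra with $\deg Y_j=\sigma_j$, the canonical surjection $T\twoheadrightarrow\mf{R}$, $Y_j\mapsto\rho_j$, gives $\mf{R}(t)\le\bigl(1-(t^{\sigma_1}+\cdots+t^{\sigma_m})\bigr)^{-1}$ with equality if and only if $\mf{R}$ is free over $\rho_1,\ldots,\rho_m$. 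Granting these, \emph{(iii) $\Rightarrow$ (i)} becomes a one-line computation: substituting $\mf{R}(t)=(1-\sum_j t^{\sigma_j})^{-1}$ and $A(t)=(1-\sum_i t^{\tau_i})^{-1}$ into the coproduct formula and solving $(\mf{R}\amalg(A/\mc{R}))(t)=A(t)$ for $B(t)=(A/\mc{R})(t)$ forces $B(t)=(1-\sum_i t^{\tau_i}+\sum_j t^{\sigma_j})^{-1}$, which is (i). The same computation read in reverse shows that, whenever (i) (equivalently (ii)) holds \emph{and} $\mf{R}$ is free over the $\rho_j$, the graded vector spaces $\mf{R}\amalg(A/\mc{R})$ and $A$ have equal Poincaré series and are therefore isomorphic. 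Hence \emph{(i)+(ii) $\Rightarrow$ (iii)} is reduced to the single assertion that a strongly free sequence generates a \emph{free} subalgebra of $A$.

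This last assertion is the heart of the matter and the step I expect to be the main obstacle. The plan is to extract it from (ii) by studying the intermediate ring $\mf{R}\subseteq A$: the exact sequence of (ii) exhibits a length-two free resolution of $\F_p$ over $B$, so $B$ has global dimension $\le 2$; one then shows that a strongly free sequence makes $A$ a \emph{free} left (equivalently right) $\mf{R}$-module — this being the genuinely nontrivial input — and combines this with the identity $\mc{R}=A\mf{R}^+A$ (where $\mf{R}^+=\bigoplus_{n\ge 1}\mf{R}_n$) and a change-of-rings argument to conclude that $\operatorname{Tor}^{\mf{R}}_n(\F_p,\F_p)=0$ for all $n\ge 2$ and that $\rho_1,\ldots,\rho_m$ form a minimal generating set, i.e.\ that $\mf{R}$ is free over them. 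The freeness of $A$ as an $\mf{R}$-module can be obtained either by a direct filtration/rewriting argument inside the free algebra $A$, or by passing to initial forms with respect to a suitable multiplicative monomial order and invoking combinatorial freeness of the leading words (cf.\ section~3); in any case these equivalences are due to Labute, and we essentially follow the arguments of \cite{JL}.
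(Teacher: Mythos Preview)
Your treatment of (i) $\Leftrightarrow$ (ii) is correct and matches the paper exactly: both extract it from the exact sequence in the proof of Lemma~\ref{poincareestimate}, observing that the inequality becomes an equality precisely when the surjection $\bigoplus_i B[\sigma_i]\twoheadrightarrow\mc{R}/\mc{R}I_A$ is an isomorphism.

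For (i) $\Leftrightarrow$ (iii) the paper gives no argument at all; it simply cites \cite{DA}, Th.~2.6 (Anick), not Labute. So your attempt here goes well beyond what the paper does. Your reduction is sound: since (iii) asks only for an isomorphism of graded \emph{vector spaces}, it is a Poincar\'e series statement, and the coproduct identity $(C\amalg D)(t)^{-1}=C(t)^{-1}+D(t)^{-1}-1$ together with $\mf{R}(t)=(1-\sum_j t^{\sigma_j})^{-1}$ makes (iii) $\Rightarrow$ (i) immediate and shows that (i) $\Rightarrow$ (iii) comes down to proving that $\mf{R}$ is free on $\rho_1,\ldots,\rho_m$.

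That last step, however, is where your proposal has a genuine gap. You correctly flag it as ``the heart of the matter'' but the two routes you suggest do not close it. The second suggestion---pass to leading monomials with respect to a multiplicative order and invoke combinatorial freeness---cannot work in general: combinatorial freeness of the high terms is a \emph{sufficient} condition for strong freeness (Theorem~\ref{anickscrit}), not a necessary one, so there is no reason an arbitrary strongly free sequence admits such an order. The first suggestion (show $A$ is free as a left $\mf{R}$-module, then run a change-of-rings Tor argument) is the right shape, but the freeness of $A$ over $\mf{R}$ is precisely the nontrivial content of Anick's theorem (his notion of an \emph{inert} set), and you have not indicated how to obtain it from (i) or (ii) alone; ``a direct filtration/rewriting argument'' is not yet an argument. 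In short, your sketch re-derives the easy half of \cite{DA}, Th.~2.6 and isolates the hard half, but does not prove it. The correct attribution for (i) $\Leftrightarrow$ (iii) is Anick \cite{DA}, not \cite{JL}.
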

\begin{proof}
 The equivalence (i)$\Leftrightarrow$(ii) follows directly from the proof of \ref{poincareestimate}. For the equivalence (i)$\Leftrightarrow$(iii) see \cite{DA}, Th.2.6.
\end{proof}

Before proceeding, we give two examples in the case $\tau=(1,\ldots, 1)$:

\begin{exam}\quad
\label{strfreeexample}
\begin{itemize} 
 \item[\rm (i)]
 Let $d\ge 4$ and
\[
 \rho_1=[X_1,X_2],\ \rho_2=[X_2,X_3],\ldots,\ \rho_{d-1}=[X_{d-1},X_d],\ \rho_d=[X_d,X_1].
\]
Then the sequence $\rho_1, \ldots, \rho_d$ is strongly free. In fact, this can be shown using Labute's criterion on \f{non-singular circuits}, cf.\ \cite{JL} or \f{Anick's criterion}, cf.\ \ref{anickscrit}.
 \item[\rm (ii)] Let $d\ge 3$ and 
\[
  \rho_1=[X_1,X_2],\ \rho_2=[X_2,X_3],\ \rho_3=[X_3,X_1].
\]
The sequence $\rho_1,\ldots, \rho_3$ is \it not \rm strongly free. In fact, there can be no strongly free sequence of $3$ homogeneous polynomials of degree $2$ in $3$ variables, since the series
\[
 \frac{1}{1-3t+3t^2} = 1+3t+6t^2+9t^3+9t^4+0t^5-27t^6 + \mc{O}(t^7)
\]
contains strictly negative coefficients. 
\end{itemize}
\end{exam}

We will now study the notion of \f{mild} pro-$p$-groups. In the following, for a pro-$p$-group $G$ we set 
\[
H^i(G)=H^i(G,\F_p)\ \mbox{and}\ h^i(G)=\dimfp H^i(G),\ i\ge 0
\]
A pro-$p$-group $G$ is called \f{finitely presented} if $h^1(G),h^2(G)<\infty$. A presentation
 \[
\begin{tikzpicture}[description/.style={fill=white,inner sep=2pt}, bij/.style={below,sloped,inner sep=1.5pt}]
\matrix (m) [matrix of math nodes, row sep=1.5em,
column sep=2.5em, text height=1.5ex, text depth=0.25ex]
{  1 & R & F & G & 1\\};
\path[->,font=\scriptsize]
(m-1-1) edge node[auto] {} (m-1-2)
(m-1-2) edge node[auto] {} (m-1-3)
(m-1-3) edge node[auto] {} (m-1-4)
(m-1-4) edge node[auto] {} (m-1-5);
\end{tikzpicture}
\]
of $G$ by a free pro-$p$-group $F$ is called \f{minimal} if the inflation map 
\[
\begin{tikzpicture}[description/.style={fill=white,inner sep=2pt}, bij/.style={below,sloped,inner sep=1.5pt}]
\matrix (m) [matrix of math nodes, row sep=1.5em,
column sep=2.5em, text height=1.5ex, text depth=0.25ex]
{  \infl:\ H^1(G) & H^1(F)\\};
\path[right hook->,font=\scriptsize]
(m-1-1) edge node[auto] {} (m-1-2);
\end{tikzpicture}
\]
is an isomorphism. If $F$ is the free pro-$p$-group on $x=\{x_1,\ldots, x_d\}$ endowed with the Zassenhaus $(x,\tau)$-filtration (where as before $\tau=(\tau_1,\ldots, \tau_d)$ for some natural numbers $\tau_i\ge 1$), then the latter is equivalent to $R\subseteq F_{(\tau,2)}$. Furthermore, if $R$ is generated by $r_1,\ldots, r_m\in F_{(\tau,2)}$ as closed normal subgroup of $F$, we also write
\[
 G=\langle x_1,\ldots, x_d\ |\ r_1,\ldots, r_m\rangle.
\]

\begin{defi}\quad
\label{milddefi}
\begin{itemize}
 \item[\rm (i)] Let $F$ be the free pro-$p$-group on $x=\{x_1,\ldots, x_d\}$ endowed with the $(x,\tau)$-filtration for some $\tau=(\tau_1,\ldots, \tau_d)$ and $r_1,\ldots, r_m\in F_{(\tau,2)}=F^{(2)}$. Let $X=\{X_1,\ldots, X_d\}$ and suppose that the sequence $\rho=\{\rho_1,\ldots, \rho_m\}$ of the initial forms $\rho_i$ of $r_i$
\[
 \rho_1,\ldots, \rho_m \in \gr^\tau F \subset \polyx^\tau
\]
is strongly free. Then $G=\langle x_1,\ldots, x_d\ |\ r_1,\ldots, r_m\rangle$ is called \f{strongly free presentation} with respect to the Zassenhaus $(x,\tau)$-filtration of the pro-$p$-group $G$.
 \item[\rm (ii)] A finitely generated pro-$p$-group $G$ with $d=h^1(G)$ is called \f{mild} with respect to the Zassenhaus $(x,\tau)$-filtration if it possesses a strongly free presentation with respect to the Zassenhaus $(x,\tau)$-filtration.
\end{itemize}
\end{defi}

The following theorem collects the main properties of mild pro-$p$-groups. 

\begin{theo}
\label{mainmildtheo}
Let $F$ be the free pro-$p$-group on $x=\{x_1,\ldots, x_d\}$ endowed with the $(x,\tau)$-filtration for some $\tau=(\tau_1,\ldots, \tau_d)$. Let $G$ be a mild pro-$p$-group such that $G=F/R=\langle x_1,\ldots, x_d\ |\ r_1,\ldots, r_m\rangle, \ d,m\ge 1$ is a strongly free presentation with respect to the Zassenhaus $(x,\tau)$-filtration where $R\subseteq F_{(\tau,2)}$ denotes the closed normal subgroup generated by $r_i,\ i=1,\ldots, m$. Set $\sigma_i=\deg^\tau r_i,\ i=1,\ldots, m$. Furthermore, let $(G_{(\tau,n)})_{n\in\N}$ denote the filtration of $G$ induced by the projection $\xyalign\xymatrix@C=18pt{F\ar@{->>}[r] & G}$ and $\gr^\tau G=\bigoplus_{i\ge 1} G_{(\tau,i)}/G_{(\tau,i+1)}$ the associated graded restricted $\F_p$-Lie algebra. Then the following holds:
\begin{itemize}
 \item[\rm (i)] We have $\cd G = 2$ and the relation rank $h^2(G)$ of $G$ is equal to $m$.
 \item[\rm (ii)] The $\Omega_G$-module $R/R^p[R,R]$ is free over the images of $r_1,\ldots, r_m$.
 \item[\rm (iii)] We have $\gr^\tau G=\gr^\tau F / (\rho_1,\ldots, \rho_m)$ where $(\rho_1,\ldots, \rho_m)$ denotes the ideal of the restricted Lie algebra $\gr^\tau F$ generated by the initial forms $\rho_i$ of $r_i$.
 \item[\rm (iv)] The universal enveloping algebra $U_{\gr^\tau G}$ of $\gr^\tau G$ is the graded algebra associated to the filtration on $\Omega_G$ induced by the $(x,\tau)$-filtration on $\Omega_F$ and its Poincaré series satisfies
\[
 U_{\gr^\tau G}(t) = \frac{1}{1-(t^{\tau_1}+\ldots+t^{\tau_d}) + (t^{\sigma_1}+\ldots+t^{\sigma_m})}.
\]
 \item[\rm (v)] If $m\neq d-1$, then $G$ is not $p$-adic analytic.
 \end{itemize}
\end{theo}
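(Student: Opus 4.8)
The overall plan, in the spirit of Labute's treatment in \cite{JL} but keeping track of the weights, is to reduce all five assertions to a single statement about the associated graded of the complete group algebra $\Omega_G$. I would write $\mathfrak{a}=\ker(\Omega_F\twoheadrightarrow\Omega_G)$; since $r_1,\ldots,r_m$ generate $R$ as a closed normal subgroup, $\mathfrak{a}$ is the closed two-sided ideal of $\Omega_F$ generated by $r_1-1,\ldots,r_m-1$. Under $\Omega_F\cong\power$ the valuation $\nu_\tau$ induces the filtration $\Omega_{G,n}$ (the image of $\power^\tau_n$) on $\Omega_G$, and I set $\gr\Omega_G=\bigoplus_n\Omega_{G,n}/\Omega_{G,n+1}$. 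Since $\gr^\tau\power\cong A$ and the ideal of initial forms of $\mathfrak{a}$ contains the two-sided ideal $\mc{R}=(\rho_1,\ldots,\rho_m)$, there is a canonical surjection of graded $\F_p$-algebras $\pi\colon B=A/\mc{R}\twoheadrightarrow\gr\Omega_G$; one also notes $\bigcap_n\Omega_{G,n}=0$ (it lies in $\bigcap_n I_G^n=0$, $\Omega_G$ being Noetherian with maximal ideal $I_G$) and that $\Omega_G$ is complete for this filtration. The claim on which everything hinges is that $\pi$ is an isomorphism.

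To prove this I would argue with Poincaré series. One inequality, $\gr\Omega_G(t)\le B(t)$, is immediate from the surjectivity of $\pi$. For the reverse inequality I would bring in the relation module $M=\mathfrak{a}/\mathfrak{a}I_F$, a left $\Omega_G$-module generated by the classes of $r_1-1,\ldots,r_m-1$; identifying $M$ with $\ker\big(\Omega_G^d\to\Omega_G,\ (a_i)\mapsto\sum_i a_i(x_i-1)\big)$ gives an exact sequence of left $\Omega_G$-modules
\[
 0\lra M\lra\Omega_G^d\lra\Omega_G\lra\F_p\lra 0 .
\]
Filtering each term by the induced filtration, with the $i$-th copy of $\Omega_G$ inside $\Omega_G^d$ shifted by $\tau_i$, this sequence is strict, hence passing to associated gradeds yields
\[
 \gr M(t)=\Big(\sum_{i=1}^d t^{\tau_i}-1\Big)\gr\Omega_G(t)+1 .
\]
On the other hand, since the Fox derivatives $\partial r_j/\partial x_i$ are obtained by stripping the last letter of the monomials of $r_j-1$, one has $\nu_\tau(\partial r_j/\partial x_i)\ge\sigma_j-\tau_i$, so $\gr M$ is generated over $\gr\Omega_G$ by $m$ elements of degrees $\ge\sigma_j$ and therefore $\gr M(t)\le\big(\sum_{j=1}^m t^{\sigma_j}\big)\gr\Omega_G(t)$. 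Combining the two gives $\gr\Omega_G(t)\cdot\big(1-\sum_i t^{\tau_i}+\sum_j t^{\sigma_j}\big)\ge 1$; multiplying by $B(t)$, which equals $\big(1-\sum_i t^{\tau_i}+\sum_j t^{\sigma_j}\big)^{-1}$ precisely because $\rho$ is strongly free, yields $\gr\Omega_G(t)\ge B(t)$, whence $\gr\Omega_G(t)=B(t)$ and $\pi$ is an isomorphism. This is where the strong freeness hypothesis — equivalently the module statement of \ref{strfreeconditions} — really enters, and the verification that the filtered resolution is strict and that $\gr M$ has the expected generating degrees follows \ref{poincareestimate} (i.e.\ \cite{JL}, Prop.\,3.2) and its consequences.

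Granting $\pi\colon B\xrightarrow{\ \sim\ }\gr\Omega_G$, the remaining assertions should fall out as follows. Equality forces the surjection $\bigoplus_j\gr\Omega_G[\sigma_j]\twoheadrightarrow\gr M$ to be an isomorphism, so $\gr M$ is free over $\gr\Omega_G$; by completeness and separatedness $M$ is then a free $\Omega_G$-module on the images of $r_1-1,\ldots,r_m-1$, and via the standard isomorphism $M\cong R/R^p[R,R]$ (sending $r\mapsto r-1$) this is (ii). Freeness of $M$ makes $0\to M\to\Omega_G^d\to\Omega_G\to\F_p\to0$ a free resolution of $\F_p$ of length $2$, so $\cd G\le 2$, and since $h^2(G)=\dimfp(\F_p\otimes_{\Omega_G}M)=m\ge 1$ one gets $\cd G=2$ and $h^2(G)=m$, i.e.\ (i). For (iii): as $A\to\gr\Omega_G$ has kernel $\mc{R}$ and $\gr^\tau G\hookrightarrow\gr\Omega_G$, the map $\gr^\tau F\to\gr^\tau G$ is surjective with kernel $\gr^\tau F\cap\mc{R}$; applying the universal enveloping algebra functor together with \ref{universalquot} one gets $U_{\gr^\tau F/(\rho_1,\ldots,\rho_m)}=A/\mc{R}=U_{\gr^\tau G}$, and since $L\hookrightarrow U_L$ this forces $\gr^\tau F\cap\mc{R}$ to be the restricted Lie ideal $(\rho_1,\ldots,\rho_m)$, i.e.\ $\gr^\tau G=\gr^\tau F/(\rho_1,\ldots,\rho_m)$. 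Then (iv) is the combination of $U_{\gr^\tau G}=\gr\Omega_G=B$ with the strong-freeness formula for $B(t)$.

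Finally, for (v): if $G$ were $p$-adic analytic it would have finite rank, so $\gr\Omega_G$ — whose Poincaré series is $B(t)=\big(1-\sum_i t^{\tau_i}+\sum_j t^{\sigma_j}\big)^{-1}$ — would have polynomially bounded graded dimensions; as $G$ is infinite (since $\cd G=2$) this series has radius of convergence exactly $1$, and since its coefficients are non-negative, Pringsheim's theorem forces its denominator to vanish at $t=1$, that is $1-d+m=0$, contradicting $m\ne d-1$. I expect the main obstacle to be the identification $\pi\colon B\xrightarrow{\ \sim\ }\gr\Omega_G$ — concretely, producing the reverse Poincaré series inequality with the correct book-keeping of the induced filtrations on the relation module — everything else being either formal or a direct consequence of it.
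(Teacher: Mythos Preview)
Your overall architecture is sound and close to the paper's, but there is a genuine gap at exactly the point you flag as the ``main obstacle''. The inequality
\[
 \gr M(t)\ \le\ \big(\textstyle\sum_{j} t^{\sigma_j}\big)\,\gr\Omega_G(t)
\]
does \emph{not} follow from the fact that $M$ is generated over $\Omega_G$ by $m$ elements whose Fox--derivative vectors have filtration degree $\ge\sigma_j$. You are using two different filtrations on $M$ at once: to make the sequence $0\to M\to\Omega_G^d\to\Omega_G\to\F_p\to 0$ strict you must give $M$ the \emph{subspace} filtration from $\Omega_G^d$; but the bound $\gr M(t)\le\sum_j t^{\sigma_j}\gr\Omega_G(t)$ would follow from the \emph{quotient} filtration coming from the surjection $\bigoplus_j\Omega_G[\sigma_j]\twoheadrightarrow M$. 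One has $M^{\mathrm{quot}}_n\subseteq M^{\mathrm{sub}}_n$, which only yields an inequality of partial sums of the graded dimensions, not the coefficient--wise inequality you need. Your appeal to \ref{poincareestimate} does not help here, since that lemma is about $B=A/\mc R$, not about $\gr\Omega_G$; the whole point is that a priori $\gr\Omega_G\neq B$.

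The paper closes this gap by a direct degree--by--degree induction on the map $\xi\colon \mc R/\mc R I_A\to \gr M$ (equivalently $\psi\colon \mc R/\mc R I_A\to \gr\hat{\mc R}/\hat{\mc R}I_F$). Injectivity in degree $k$ is elementary once one knows $\mc R_n=\gr_n\hat{\mc R}$ for $n<k$; the substantive step is \emph{surjectivity}, where one writes $b=\sum g_i\ol r_i$, looks at the minimal value $k'=\min(\deg g_i+\sigma_i)$, and uses the freeness of $\mc R/\mc R I_A$ as a $B$--module (the characterisation \ref{strfreeconditions}(ii) of strong freeness) to force $k'=k$. This is precisely where strong freeness enters, and it is what replaces your unjustified generation claim for $\gr M$. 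Once $\xi$ is an isomorphism, your deductions of (i)--(iv) are essentially the paper's. For (v) your Pringsheim argument is a legitimate alternative to the paper's appeal to \cite{DA}, Lemma 3.4 together with Lazard's criterion; note only that ``radius of convergence exactly $1$'' uses that the coefficients of $B(t)$ are non-negative integers (so radius $>1$ would force $B$ to be a polynomial, contradicting $\dim_{\F_p}\gr\Omega_G=\infty$) and that $p$-adic analyticity gives polynomial growth for the $I_G$-adic and hence for the cofinal $\tau$-filtration.
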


\begin{remas}\quad
\begin{itemize}
 \item[\rm (i)] If the initial forms $\rho_1,\ldots, \rho_m$ lie in the free Lie subalgebra of $\polyx^\tau$ generated by $X$ (i.e.\ if they are Lie polynomials in the variables \linebreak $X_1,\ldots, X_d$), the above statements are contained in \cite{JL}, Th.5.1. The generalization to arbitrary sequences is crucial for the results on higher Massey products as well as for applications to arithmetically defined pro-$p$-groups. A different proof of the fact that mild pro-$p$-groups are of cohomological dimension $2$ involving (associative) $\F_p$-algebras only is given in \cite{PF2}.
 \item[\rm (ii)] The fact that one can check mildness with respect to various Zassenhaus $(x,\tau)$-filtrations is an obvious but useful property. For instance, consider the pro-$p$-group
\[
 G=\langle x_1, x_2\ |\ x_1^2 x_2^4\rangle,
\]
which is not mild with respect to the Zassenhaus filtration. However, it is mild with respect to the Zassenhaus $(x,\tau)$-filtration where $\tau_1=2,\tau_2=1$. 
 \end{itemize}
\end{remas}

In the following proof we can adopt the main ideas of the proof of \cite{JL}, Th.4.1. There similar results are obtained in the case of strongly free sequences with respect to (generalized) $p$-central series. The main difference lies in the fact that we have to deal with graded (associative) $\F_p$-algebras, whereas Labute's notion of strong freeness is in terms of free Lie algebras over the polynomial ring $\F_p[\pi]$.

\begin{proof}[\it Proof of Theorem \ref{mainmildtheo}]
For the sake of simplicity and ignoring our notational conventions, we omit the indices $\tau$ during the proof, hence we set $F_{(n)}=F_{(\tau,n)},$ $\gr_n F=(\gr^\tau(F))_n = F_{(\tau,n)}/F_{(\tau,n+1)}$ etc. By $(R_{(n)})_{n\in\N}$ we denote the induced filtration on $R$, i.e.\ $R_{(n)}=F_{(n)}\cap R$. Furthermore, we set $M=R/R^p[R,R]$ and set $M_{(n)}=\pi(R_{(n)})$ where $\pi$ is the canonical surjection $\xyalign\xymatrix@C=18pt{\pi: R\ar@{->>}[r] & M.}$ Recall that $I_F$ is the augmentation ideal of $\Omega_F$ and we that make the identification $A=\F_p\langle X_1,\ldots, X_d\rangle\cong \gr\Omega_F$. Let $\hat{\mc{R}}\subseteq \Omega_F$ denote the closed two-sided ideal generated by $r-1,\ r\in R$. The canonical map $R\longrightarrow \hat{\mc{R}},\ r\longmapsto r-1$ induces an isomorphism of graded $\Omega_G$-modules
 \begin{eqnarray*}
\label{phiiso}
\begin{tikzpicture}[description/.style={fill=white,inner sep=2pt}, bij/.style={above,sloped,inner sep=1.5pt}, column 1/.style={anchor=base east},column 2/.style={anchor=base west}]
\matrix (m) [matrix of math nodes, row sep=1.5em,
column sep=2.5em, text height=1.5ex, text depth=0.25ex]
{ \phi:\ M & \hat{\mc{R}} / \hat{\mc{R}} I_{\hat{A}}\\};
\path[->,font=\scriptsize]
(m-1-1) edge node[bij] {$\sim$} (m-1-2);
\end{tikzpicture}
\end{eqnarray*}
cf.\ \cite{AB}, proof of Th.5.2.
Let $\mc{R}$ denote the two-sided ideal of $A$ generated by the initial forms of $\rho_i$ of $r_i,\ i=1,\ldots, m$. By definition, we have the inclusion $\mc{R}\hookrightarrow \gr \hat{\mc{R}}$. We endow the ideals $\hat{\mc{R}}$ and $\hat{\mc{R}}I_F$ with the filtrations induced by the filtration on $\Omega_F$. We obtain a homomorphism
\[
 \psi: \mc{R}/\mc{R}I_A\longrightarrow \gr \hat{\mc{R}}/\gr \hat{\mc{R}}I_F = \gr \hat{\mc{R}}/\hat{\mc{R}}I_F
\]
of graded $\F_p$-algebras. We claim that $\psi$ is an isomorphism. This is equivalent to showing that the composite 
\[
 \begin{tikzpicture}[description/.style={fill=white,inner sep=2pt}, bij/.style={above,sloped,inner sep=1.5pt}, column 1/.style={anchor=base east},column 2/.style={anchor=base west}]
\matrix (m) [matrix of math nodes, row sep=1.5em,
column sep=2.5em, text height=1.5ex, text depth=0.25ex]
{ \xi:\ \mc{R}/\mc{R}I_A & \gr \hat{\mc{R}}/\hat{\mc{R}}I_{\hat{A}} & \gr M\\};
\path[->,font=\scriptsize]
(m-1-1) edge node[auto] {} (m-1-2)
(m-1-2) edge node[bij] {$\sim$} (m-1-3);
\end{tikzpicture}
\]
of $\psi$ with the isomorphism $\gr \phi^{-1}$ is again an isomorphism. We prove this by induction on degrees. Let $k\in\N$ and assume that $\xi$ is an isomorphism in degrees $< k$ (if $k=1$, this is clearly fulfilled, since the degree zero subspaces are trivial).
\vspace{10pt}

Injectivity of $\xi$ in degree $k$: First deduce that $\mc{R}_n = \gr_n \hat{\mc{R}}=\hat{\mc{R}}_n/\hat{\mc{R}}_{n+1}$ for all $n<k$. In fact, this follows immediately by induction on degrees from the surjectivity of $\xi$ (and hence of $\psi$) in degrees $<k$. Let $\chi\in \gr_n \hat{\mc{R}} I_F$. Then $\chi$ is the initial form of some element $\sum_{i=1}^l x_i y_i,\ x_i\in\hat{\mc{R}},\ y_i\in I_F$ such that $\deg x_i + \deg y_i = k$ for all $i$. In particular, $\deg x_i,\deg y_i < k$. Using $\mc{R}_n= \gr_n \hat{\mc{R}},\ I_A = \gr_n I_{\hat{A}},\ n<k$, we conclude that $(\mc{R} I_A)_k = \gr_k \hat{\mc{R}} I_F$ and consequently $\psi$ (and hence $\xi$) is injective in degree $k$.
\vspace{10pt}

Surjectivity of $\xi$ in degree $k$: Let $\beta$ be a non-zero element in $\gr_k M$ and choose $b\in M_k=(R/R^p[R,R])_k$ whose initial form is $\beta$. Denoting by $\ol{r}_i$ the image of $r_i$ in $M$, then $\ol{r}_1,\ldots, \ol{r}_m$ generate $M$ as $\Omega_G$-module and we may therefore write
\[
 b = g_1 \ol{r}_1 + \ldots + g_m \ol{r}_m
\]
 with some $g_i\in \Omega_G$. Set $\omega_i=\deg g_i,\ i=1,\ldots, m$ (where we endow $\Omega_G$ with the filtration induced by the $(x,\tau)$-filtration on $\Omega_F$). By definition of the operation of $\Omega_G$ on $M$, we have $(\Omega_G)_i M_j \subseteq M_{i+j}$ and hence we may assume that in the above sum we have $g_i=0$ or $\omega_i+\sigma_i \le k$ for all $i=1,\ldots, m$. Let
\[
 k'=\min_{\stackrels{i=1,\ldots, m,}{g_i\neq 0}} (\omega_i+\sigma_i).
\]
 We claim that $k'=k$. To this end, assume $k'< k$ and define the set $I$ by $I=\{i=1,\ldots, m\ |\ \omega_i + \sigma_i=k'\}$. Let $\ol{\rho}_i$ denote the image of $\rho_i$ in $\mc{R}/\mc{R} I_A$ and set
\[
 \varrho = \sum_{i\in I} u_i \ol{\rho}_i \in \mc{R}/\mc{R} I_A
\]
where $u_i\in A/\mc{R}$ is a preimage of the initial form $\ol{g}_i$ of $g_i$ under the canonical projection
 \[
  \begin{tikzpicture}[description/.style={fill=white,inner sep=2pt}, bij/.style={above,sloped,inner sep=1.5pt}]
\matrix (m) [matrix of math nodes, row sep=1.5em,
column sep=2.5em, text height=1.5ex, text depth=0.25ex]
{ A/\mc{R}&\gr \Omega_G=A / \gr \hat{\mc{R}}.\\
};
\path[->>,font=\scriptsize]
(m-1-1) edge node[auto] {} (m-1-2);
\end{tikzpicture}
\]
By definition $\varrho$ is mapped via $\xi$ to the image of $\sum_{i\in I} g_i \ol{r}_i$ in $\gr_{k'} M=$ \linebreak $M_{k'}/M_{k'+1}$. Since by assumption $\deg b = k > k'$, we have $\xi(\varrho)=0$. By the induction hypothesis, this implies $\varrho = 0$. Using the equivalent characterizations of strong freeness of the sequence $\rho_1,\ldots, \rho_m$ given in \ref{strfreeconditions}, this implies $u_i=0$ for all $i\in I$, which yields a contradiction since $\ol{g}_i\neq 0,\ i\in I$. Hence, we have $k'=k$, i.e.\ $\omega_i+\sigma_i= k$ for all $i$ such that $g_i\neq 0$ and consequently $\beta$ lies in the image of $\xi$. This finishes the induction, i.e.\ $\xi$ and hence $\psi$ are isomorphisms.

By the definition of the filtrations on $R$ and $G$, $\gr R$ is an ideal of the restricted Lie algebra $\gr F$ and we have $\gr G = \gr F / \gr R$. Let $\mc{R}'$ denote the (two-sided) ideal of $A$ generated by the image of $\gr R$ under the inclusion $\gr F\hookrightarrow A=\gr \Omega_F=U_{\gr F}$. Then we have the inclusions
\[
 \mc{R} \subseteq \mc{R}' \subseteq \gr \hat{\mc{R}} \subseteq A.
\]
But as we have already remarked, the surjectivity of $\psi$ implies $\mc{R}=\gr \hat{\mc{R}}$ and consequently $\mc{R}' = \gr \hat{\mc{R}}$. By \ref{universalquot}, the universal enveloping algebra of $\gr G$ is given by
\[
 U_{\gr G} = U_{\gr F}/ \mc{R}' = A/\mc{R} = A / \gr \hat{\mc{R}} = \gr \Omega_G.
\]
From this, it follows immediately that
\[
 U_{\gr G}(t) = (A/\mc{R})(t)=\frac{1}{1-(t^{\tau_1}+\ldots+t^{\tau_d}) + (t^{\sigma_1}+\ldots+t^{\sigma_m})},
\]
since the sequence $\rho_1,\ldots, \rho_m$ is strongly free. This shows (iv). In order to prove statement (iii), let $\mf{r}=(\rho_1,\ldots, \rho_m)\subseteq \gr F$ denote the ideal of the restricted Lie algebra $\gr F$ generated by $\rho_1,\ldots, \rho_m$ and consider the exact sequence
\[
  \begin{tikzpicture}[description/.style={fill=white,inner sep=2pt}, bij/.style={above,sloped,inner sep=1.5pt}]
\matrix (m) [matrix of math nodes, row sep=1.5em,
column sep=2.5em, text height=1.5ex, text depth=0.25ex]
{0 & \gr R / \mf{r} \ & \gr F/\mf{r} & \gr G & 0.\\
};
\path[->,font=\scriptsize]
(m-1-1) edge node[auto] {} (m-1-2)
(m-1-2) edge node[auto] {} (m-1-3)
(m-1-3) edge node[auto] {} (m-1-4)
(m-1-4) edge node[auto] {} (m-1-5);
\end{tikzpicture}
\]
Passing to the universal enveloping algebras and using \ref{universalquot}, we obtain the commutative exact diagram
\[
\begin{tikzpicture}[description/.style={fill=white,inner sep=2pt}, bij/.style={above,sloped,inner sep=1.5pt}]
\matrix (m) [matrix of math nodes, row sep=1.5em,
column sep=2.5em, text height=1.5ex, text depth=0.25ex]
{   &                                  & 0 & 0      \\ 
     &                                 & \mc{R} & \gr \hat{\mc{R}}      \\
     &                                 & U_{\gr F} &  \gr \Omega_F          \\
    0 & \langle \gr R / \mf{r}\rangle \ & U_{\gr F/\mf{r}} & U_{\gr G} & 0\\
&& 0 & 0\\
};
\path[-,font=\scriptsize]
(m-2-3) edge[double,double distance=1.5pt] node[auto] {} (m-2-4)
(m-3-3) edge[double,double distance=1.5pt] node[auto] {} (m-3-4);
\path[->,font=\scriptsize]
(m-4-1) edge node[auto] {} (m-4-2)
(m-4-2) edge node[auto] {} (m-4-3)
(m-4-3) edge node[auto] {} (m-4-4)
(m-4-4) edge node[auto] {} (m-4-5)
(m-1-3) edge node[auto] {} (m-2-3)
(m-2-3) edge node[auto] {} (m-3-3)
(m-3-3) edge node[auto] {} (m-4-3)
(m-4-3) edge node[auto] {} (m-5-3)
(m-1-4) edge node[auto] {} (m-2-4)
(m-2-4) edge node[auto] {} (m-3-4)
(m-3-4) edge node[auto] {} (m-4-4)
(m-4-4) edge node[auto] {} (m-5-4);
\end{tikzpicture}
\]
where $\langle \gr R / \mf{r}\rangle \subseteq U_{\gr F/\mf{r}}$ denotes the (two-sided) ideal generated by the image of $\gr R / \mf{r}$. However, the map $U_{\gr F/\mf{r}} \longrightarrow U_{\gr G}$ being an isomorphism, it follows that $\langle \gr R / \mf{r}\rangle= 0$ and since $\gr F/\mf{r}$ is mapped injectively into its enveloping algebra, it follows that $\gr R = \mf{r}$, showing (iii).

Under the identification $A/\mc{R} = \gr \Omega_G$, the isomorphism $\mc{R}/\mc{R}I_A\cong \gr M$ is an isomorphism of $A/\mc{R}$-modules. Therefore, again using \ref{strfreeconditions}, the $\gr \Omega_G$-module $\gr M$ is free over the initial forms of the images of the $r_i$. Therefore, $M$ is a free $\Omega_G$-module over the images of the $r_i$, cf.\ \cite{ML}, Ch.V, Cor.2.1.1.3. This proves (ii). By \cite{HK}, Th.7.7 the latter is equivalent to $\cd G\le 2$. Since
\[
 h^2(G) = \dimfp H^2(G) = \dimfp H^1(R)^F = \dimfp \Hom (M, \F_p)^F = m\ge 1,
\]
we obtain (i).

Finally, since the sequence $\rho_1,\ldots, \rho_m$ is strongly free, by \cite{DA}, Lemma 3.4 it follows that the polynomial
\[
 (\gr \Omega_G)(t)^{-1} = (A/\mc{R})(t)^{-1} = 1-(t^{\tau_1}+\ldots+t^{\tau_d}) + (t^{\sigma_1}+\ldots+t^{\sigma_m})
\]
has a root in the interval $(0,1]$. If $m\neq d-1$, there is a root in the open interval $(0,1)$. By a result of M.\ Lazard (cf.\ \cite{ML}, App.3, Cor.3.12) it follows that $G$ is not $p$-adic analytic, which shows (v) and concludes the proof.
\end{proof}

\section{Multiplicative monomial orders}

As in the previous section, let $A=\polyx^\tau$ be endowed with the $(X,\tau)$-grading. For a given sequence $\rho_1,\ldots, \rho_m\in I_A$ of homogeneous polynomials, being strongly free is a condition on the (infinitely many) coefficients of the Poincaré series $A/(\rho_1,\ldots, \rho_m)(t)$ which in general is not amenable to straightforward computations by hand. Hence, one needs powerful sufficient criteria to detect strong freeness.

In the special case where $\rho_1,\ldots, \rho_m$ are \it monomials\rm, by a result due to D.\ Anick, the sequence $\rho_1,\ldots, \rho_m$ is strongly free if and only if it is \f{combinatorially free} in the sense of the following

\begin{defi}
\label{combinatoriallyfreedefi}
Let $\rho=\{\rho_1,\ldots,\rho_m\}$ be a sequence of monomials in $I_A$ (i.e.\ $\rho_i\neq 1,\ i=1,\ldots, m$). Then $\rho$ is called \f{combinatorially free} if the following conditions are satisfied:
\begin{itemize}
 \item[\rm (i)] For any $i,j\in\{1,\ldots, m\}, i\neq j$, $\rho_i$ is not a submonomial of $\rho_j$.
 \item[\rm (ii)] For any $i,j\in \{1,\ldots, m\}$ (where not necessarily $i\neq j$) and any factorization $\rho_i=x_iy_i,\ \rho_j=x_j y_j$ into monomials $x_i,y_i,x_j,y_j\neq 1$, we have
\[
 x_i\neq y_j.
\]
In other words, the beginning of $\rho_i$ is not the ending of $\rho_j$ for any $i,j$.
\end{itemize}                                                                                                                                                                                                             
\end{defi}

Note that this definition is independent of the choice of the grading $\tau=(\tau_1,\ldots, \tau_d)$. 

\begin{theo}
\label{combfreetheo} 
Let $\rho=\{\rho_1,\ldots, \rho_m\}\in I_A$ be a sequence of monomials. Then $\rho$ is strongly free if and only if it is combinatorially free.
\end{theo}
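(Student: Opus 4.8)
The plan is to prove both implications by elementary combinatorics on words, exploiting that for a monomial ideal the quotient carries an evident monomial basis. Call a monomial $w$ in the letters $X=\{X_1,\dots,X_d\}$ \emph{reduced} if no $\rho_j$ occurs in $w$ as a submonomial. Since $\rho=\{\rho_1,\dots,\rho_m\}$ consists of monomials, the two-sided ideal $\mc{R}=(\rho_1,\dots,\rho_m)$ is the $\F_p$-span of those monomials admitting some $\rho_j$ as a submonomial; hence the cosets of the reduced monomials form an $\F_p$-basis of $B=A/\mc{R}$, and, writing $b_n=\dimfp B_n$, the integer $b_n$ is the number of reduced monomials of $\tau$-degree $n$ (with the conventions $b_0=1$ and $b_k=0$ for $k<0$). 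By Definition \ref{stronglyfreedefi}, the assertion to be proved is thus that $\rho$ is combinatorially free exactly when
\[
B(t)=\bigl(1-(t^{\tau_1}+\dots+t^{\tau_d})+(t^{\sigma_1}+\dots+t^{\sigma_m})\bigr)^{-1}.
\]

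For the implication ``combinatorially free $\Rightarrow$ strongly free'' I would establish, for every $n\ge 1$, the recursion
\[
\sum_{i=1}^{d}b_{n-\tau_i}\;=\;b_n+\sum_{j=1}^{m}b_{n-\sigma_j},
\]
which is precisely the statement that $B(t)\bigl(1-(t^{\tau_1}+\dots+t^{\tau_d})+(t^{\sigma_1}+\dots+t^{\sigma_m})\bigr)=1$, hence equivalent to strong freeness. The left-hand side counts the monomials $w$ of $\tau$-degree $n$ whose truncation $v$ (obtained by deleting the final letter of $w$) is reduced; indeed a pair consisting of a final letter $X_i$ and a reduced monomial $v$ of degree $n-\tau_i$ is the same datum as such a $w$. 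If $w$ is itself reduced then $v$ is automatically reduced, and each reduced $w$ of degree $n$ is recovered from a unique such pair, so these account for $b_n$ of the pairs. If $w$ is not reduced but $v$ is, then any $\rho_j$ occurring in $w$ must be a \emph{suffix} of $w$; condition (i) forces this suffix to be unique (two distinct $\rho_j,\rho_k$ both suffixes would, comparing lengths, exhibit one as a submonomial of the other), so $w=u\rho_j$ for a unique $j$, and $u$ — a submonomial of the reduced monomial $v$ — is reduced of degree $n-\sigma_j$. The assignment $w\mapsto(j,u)$ is visibly injective into $\bigsqcup_{j}\{\text{reduced monomials of degree }n-\sigma_j\}$; its surjectivity is the sole place where condition (ii) enters: given a reduced $u$ of degree $n-\sigma_j$, one must check that the truncation of $w:=u\rho_j$ is reduced, and if some $\rho_k$ occurred in it, then, not lying inside $u$ nor inside the truncated $\rho_j$-block (by condition (i)), it would \emph{straddle} the junction, so its portion inside the $\rho_j$-block would be a nonempty proper beginning of $\rho_j$ equal to a nonempty proper ending of $\rho_k$, contradicting (ii). This yields the recursion and hence strong freeness.

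For the converse I would argue contrapositively, distinguishing which defining condition fails. If condition (i) fails, say $\rho_{j_0}$ is a submonomial of $\rho_{j_1}$ with $j_0\ne j_1$, then $\rho_{j_1}$ already belongs to the ideal generated by the remaining relations, so $\mc{R}=(\rho_j:j\ne j_1)$ and Lemma \ref{poincareestimate}, applied to this shorter sequence, gives
\[
B(t)\;\ge\;\bigl(1-(t^{\tau_1}+\dots+t^{\tau_d})+\sum\nolimits_{j\ne j_1}t^{\sigma_j}\bigr)^{-1}\;>\;\bigl(1-(t^{\tau_1}+\dots+t^{\tau_d})+(t^{\sigma_1}+\dots+t^{\sigma_m})\bigr)^{-1},
\]
the strict inequality holding because the difference of the two right-hand series equals $t^{\sigma_{j_1}}$ divided by a product of two series of constant term $1$, hence has positive leading coefficient; so $\rho$ is not strongly free. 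If condition (i) holds but (ii) fails, choose $i,j$ and a nonempty monomial $w$ with $\rho_j=x_jw$ and $\rho_i=wy_i$ for monomials $x_j,y_i\ne 1$ (exactly what a violation of (ii) provides). Then $x_j\rho_i=x_jwy_i=\rho_jy_i\in\mc{R}I_A$ (as $y_i\in I_A$), whereas $x_j$, being a proper beginning of $\rho_j$, contains no $\rho_k$ as a submonomial by condition (i), so $x_j\notin\mc{R}$. Hence the canonical surjection $\bigoplus_{k=1}^{m}B[\sigma_k]\twoheadrightarrow\mc{R}/\mc{R}I_A$ from the proof of Lemma \ref{poincareestimate} is not injective — the element with the nonzero image of $x_j$ in the $i$-th summand and $0$ elsewhere lies in its kernel — so by the equivalence (i)$\Leftrightarrow$(ii) of Proposition \ref{strfreeconditions} the sequence $\rho$ is not strongly free.

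I expect the surjectivity step in the first implication — the verification that $u\rho_j$ has reduced truncation — to be the main obstacle and the only genuinely combinatorial point: it is where the ``no beginning equals an ending'' condition (ii) is unavoidable, while everything else is bookkeeping with reduced monomials and the exact sequences already produced in the proof of Lemma \ref{poincareestimate}.
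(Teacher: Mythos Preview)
Your argument is correct. The paper itself gives no proof of this theorem; it simply refers to \cite{DA}, Th.3.1. Your proof is therefore not a reproduction of the paper's reasoning but a self-contained substitute: you exploit that for a monomial ideal the reduced monomials form an explicit $\F_p$-basis of $B$, and you establish the Poincar\'e-series identity by a bijective recursion on monomials with reduced truncation. The identification of the unique step where condition (ii) enters --- checking that $u\rho_j$ has reduced truncation --- is exactly right, and the uniqueness of the suffix $\rho_j$ via condition (i) is handled correctly. For the converse, your two cases are clean: a failure of (i) makes one relation redundant, so Lemma \ref{poincareestimate} applied to the shorter sequence already gives a strictly larger lower bound for $B(t)$; a failure of (ii) with (i) intact produces an explicit nonzero kernel element in $\bigoplus_k B[\sigma_k]\twoheadrightarrow\mc{R}/\mc{R}I_A$, whence non-freeness by the equivalence (i)$\Leftrightarrow$(ii) of Proposition \ref{strfreeconditions}. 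What you gain over the paper's bare citation is an argument internal to the paper's own toolkit (Lemma \ref{poincareestimate} and Proposition \ref{strfreeconditions}); what Anick's original treatment offers in exchange is a more general framework that also yields Theorem \ref{anickscrit} for non-monomial relations.
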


For a proof we refer to \cite{DA}, Th.3.1.
\vspace{10pt}

The above fact can be used to derive a class of sufficient criteria for the strong freeness of arbitrary sequences of polynomials by considering only the leading monomials with respect to certain orders. By $\mf{M}$ we denote the set of all monomials (including $1$) in $A$. 

\begin{defi}
\label{multorder}
A total order $<$ on $\mf{M}$ is said to be \f{multiplicative} if the following holds:
\begin{itemize}
 \item[\rm (i)] $1 < \alpha$ for all $1\neq \alpha\in\mf{M}$,
 \item[\rm (ii)] if $\alpha<\alpha'$, then $\beta \alpha \gamma < \beta \alpha' \gamma$ for all $\beta,\gamma\in \mf{M}$.
\end{itemize}
\end{defi}

\begin{exam}
\label{lexicographicorderexam}
 Let $<$ be a total ordering on the set $X=\{X_1,\ldots, X_d\}$. Then the lexicographic ordering $<'$ on $\mf{M}$ given by
\begin{eqnarray*}
 && X_{i_1}\cdots X_{i_l} <' X_{j_1}\cdots X_{j_k}\\ 
&:\Longleftrightarrow & l<k\ \mbox{or}\ l=k\ \mbox{and}\ X_{i_n}=X_{j_n}\ \mbox{for}\ 1\le n< n_0\ \mbox{and}\ X_{i_{n_0}} < X_{j_{n_0}}
\end{eqnarray*}
is multiplicative.
\end{exam}

Let $<$ be a multiplicative order on $\mf{M}$ and 
\[
\rho=\sum_{\alpha\in \mf{M}} c_\alpha \alpha\in A,\ c_\alpha\in\F_p
\]
be an arbitrary polynomial. The \f{high term} of $f$ with respect to $<$ is the unique monomial $\alpha\in\mf{M}$ such that $c_\alpha\neq 0$ and $\alpha'<\alpha$ for all $\alpha'\neq \alpha$ satisfying $c_{\alpha'}\neq 0$. We have the following

\begin{theo}[Anick's criterion]
\label{anickscrit}
Let $\rho_1,\ldots, \rho_m\in I_A$ be a sequence of homogeneous polynomials. Let $<$ be a multiplicative order on $\mf{M}$ and let $\tilde{\rho}_i$ denote the high term of $\rho_i,\ i=1,\ldots, m$ with respect to $<$ . If the sequence $\tilde{\rho}_1,\ldots, \tilde{\rho}_m$ is combinatorially free, then $\rho_1, \ldots, \rho_m$ is strongly free.
\end{theo}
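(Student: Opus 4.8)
The plan is to identify the Poincar\'e series of $B=A/\mc{R}$, where $\mc{R}=(\rho_1,\ldots,\rho_m)$ and $\sigma_i=\deg^\tau\rho_i$. By Lemma \ref{poincareestimate} we already have the lower bound
\[
 B(t)\ \ge\ \frac{1}{1-(t^{\tau_1}+\ldots+t^{\tau_d})+(t^{\sigma_1}+\ldots+t^{\sigma_m})},
\]
so by Definition \ref{stronglyfreedefi} it suffices to establish the reverse inequality. This reverse bound is where the order $<$ enters: I would pass from $\mc{R}$ to the monomial ideal generated by the high terms and compare dimensions degree by degree.

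\textbf{Passing to leading terms.} Let $\tilde\rho_i$ be the high term of $\rho_i$ with respect to $<$, and let $\tilde{\mc{R}}=(\tilde\rho_1,\ldots,\tilde\rho_m)$ be the (monomial) two-sided ideal they generate. Call a monomial $\alpha\in\mf{M}$ \emph{normal} if it does not lie in $\tilde{\mc{R}}$, i.e.\ is not of the form $u\tilde\rho_i v$ for any $i$ and $u,v\in\mf{M}$; the normal monomials of degree $n$ form a basis of $(A/\tilde{\mc{R}})_n$. I claim the images in $B$ of the normal monomials span $B$ over $\F_p$. Since $\rho_i$ is homogeneous of degree $\sigma_i$, both $\tilde\rho_i$ and every other monomial occurring in $\rho_i$ have degree $\sigma_i$, hence for all $u,v\in\mf{M}$
\[
 u\tilde\rho_i v\ \equiv\ -\,u(\rho_i-\tilde\rho_i)v \pmod{\mc{R}},
\]
and by axiom (ii) of Definition \ref{multorder} every monomial on the right has the same $\tau$-degree as $u\tilde\rho_i v$ but is strictly $<$-smaller. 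Starting from an arbitrary monomial and repeatedly rewriting its $<$-largest non-normal constituent in this way strictly decreases the largest non-normal monomial present; as there are only finitely many monomials of any given $\tau$-degree, the process terminates in an $\F_p$-combination of normal monomials. Hence $\dimfp B_n\le \#\{\text{normal monomials of degree }n\}=\dimfp(A/\tilde{\mc{R}})_n$, i.e.\ $B(t)\le (A/\tilde{\mc{R}})(t)$.

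\textbf{Conclusion.} By hypothesis $\tilde\rho_1,\ldots,\tilde\rho_m$ is combinatorially free, so by Theorem \ref{combfreetheo} it is strongly free; since $\deg^\tau\tilde\rho_i=\sigma_i$, Definition \ref{stronglyfreedefi} gives
\[
 (A/\tilde{\mc{R}})(t)\ =\ \frac{1}{1-(t^{\tau_1}+\ldots+t^{\tau_d})+(t^{\sigma_1}+\ldots+t^{\sigma_m})}.
\]
Combining this with the previous step and with Lemma \ref{poincareestimate} squeezes $B(t)$ between two copies of the same series, so equality holds and $\rho_1,\ldots,\rho_m$ is strongly free by Definition \ref{stronglyfreedefi}.

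\textbf{Main obstacle.} The delicate point is the spanning/rewriting argument: one must verify carefully that reduction by leading terms terminates and genuinely produces a spanning set of $B$. The two ingredients that make it go through are homogeneity of the $\rho_i$ (so every reduction stays inside a single finite-dimensional graded piece, guaranteeing termination) and compatibility property (ii) of a multiplicative order (so each reduction is strictly $<$-decreasing). Note that \emph{linear independence} of the normal monomials in $B$ is neither claimed nor needed—and can indeed fail—which is precisely why this half of the argument yields only the inequality $B(t)\le(A/\tilde{\mc{R}})(t)$ and must be paired with Lemma \ref{poincareestimate} to reach equality.
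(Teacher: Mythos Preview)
Your argument is correct. Note, however, that the paper itself does not supply a proof of this statement: it simply cites Anick's original result \cite{DA}, Th.~3.2 (stated there for lexicographic orders) and Forr\'e \cite{PF2}, Th.~2.6 for the observation that the argument extends verbatim to arbitrary multiplicative orders. What you have written is precisely that standard rewriting argument---reduce modulo $\mc{R}$ by replacing leading monomials, use homogeneity plus finiteness of each graded piece for termination, and squeeze $B(t)$ between Lemma~\ref{poincareestimate} and Theorem~\ref{combfreetheo}---so your proposal is in effect a self-contained reconstruction of the cited proof rather than an alternative route.
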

\begin{proof}
 This is shown in \cite{DA}, Th.3.2 in the case of lexicographic orders $<$ defined as in \ref{lexicographicorderexam}. As has been remarked by P.\ Forré in \cite{PF2}, Th.2.6, the proof immediately carries over to arbitrary multiplicative orders. 
\end{proof}

The above theorem yields a strong method to proof that a given sequence of polynomials is strongly free. E.g.\ the initial forms of the polynomials $\rho_1, \ldots, \rho_d$, $d\ge 4$ given in \ref{strfreeexample}(i) with respect to the lexicographic order induced by \linebreak $X_1 < X_3< \ldots < X_{d-1} < X_2 < X_4 < \ldots < X_d$ (where for simplicity we have assumed that $d$ is even) are given by
\[
 \tilde{\rho}_1= X_2 X_1,\ \tilde{\rho}_2 = X_2 X_3,\ \ldots,\ \tilde{\rho}_{d-1} = X_d X_{d-1},\ \tilde{\rho}_d = X_d X_1,
\]
which is a combinatorially free sequence.

For polynomials lying in the (free) Lie subalgebra of $\polyx$ generated by $X_1,\ldots, X_d$, further criteria have been obtained by J.\ Labute using the elimination theorem for quotients of free Lie algebras, cf.\ \cite{JL}, Th.3.3. In the proof of \ref{cohomologicalcrit}, we will make use of Anick's criterion with respect to some special multiplicative order. In order to do so, the following definition is crucial:

\begin{defi}
\label{specialorder}
 Let $U\subseteq X=\{X_1,\ldots, X_d\}$ be a subset and $<$ a total ordering on $X=\{X_1,\ldots, X_d\}$. We define a total order $<_U$ on $\mf{M}$ as follows: For a monomial $\alpha=X_{i_1} \cdots X_{i_{n_\alpha}}$ let $l^U(\alpha)$ denote the number of $X_i$'s in $\alpha$ such that $X_i\not\in U$, i.e.\  $l^U(\alpha):=\#\{k=1,\ldots, n_\alpha\ |\ X_{i_k}\not\in U\}$. If $\alpha'=X_{j_1}\cdots X_{j_{n_{\alpha'}}}$ is another monomial, we set $\alpha <_U \alpha'$ if and only if
\begin{itemize}
 \item[\rm (i)] $\deg^\tau \alpha < \deg^\tau \alpha'$ or
 \item[\rm (ii)] $\deg^\tau \alpha = \deg^\tau \alpha'$ and $l^U(\alpha) < l^U(\alpha')$ or
 \item[\rm (iii)] $\deg^\tau \alpha = \deg^\tau \alpha'$ and $l^U(\alpha) = l^U(\alpha')$ and
\[
 k^U(\alpha):=\sum_{\stackrels{1\le k \le n_\alpha,}{X_{i_k}\not\in U}} \deg^\tau (X_{i_1}\cdots X_{i_k}) < k^U(\alpha'):=\sum_{\stackrels{1\le k \le n_{\alpha'},}{X_{j_k}\not\in U}} \deg^\tau (X_{j_1}\cdots X_{j_k})
\]
or
 \item[\rm (iv)] $\deg^\tau \alpha = \deg^\tau \alpha'$ and $l^U(\alpha) = l^U(\alpha')$ and $k^U(\alpha) = k^U(\alpha')$ and $\alpha <' \alpha'$ with respect to the lexicographic ordering $<'$ induced by $<$.
\end{itemize}
\end{defi}

Roughly speaking, the more variables of $U$ are contained in a monomial $\alpha\in\mf{M}$ and the more to the right hand side these variables occur, the smaller is $\alpha$ with respect to $<_U$. 

\pagebreak
\begin{lemm}
\label{specialorderproof}
 For any total ordering $<$ on $X=\{X_1,\ldots, X_d\}$ and any subset $U\subseteq X$, the order $<_U$ as defined in \ref{specialorder} is a multiplicative order on $\mf{M}$.
\end{lemm}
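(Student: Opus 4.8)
The plan is to verify the two axioms of Definition \ref{multorder} directly, viewing $<_U$ as the lexicographic refinement built from the three $\N_0$-valued functions $\deg^\tau$, $l^U$, $k^U$ on $\mf{M}$ followed by the tie-breaker $<'$. First I would observe that $<_U$ is indeed a total order: if two distinct monomials $\alpha\neq\alpha'$ are not separated by any of clauses (i)--(iii) of Definition \ref{specialorder}, they have equal $\tau$-degree, equal $l^U$ and equal $k^U$, and then clause (iv) separates them because $<'$ is already a total order on $\mf{M}$. Axiom (i) of Definition \ref{multorder} is immediate: every $\alpha\neq 1$ contains at least one variable, so $\deg^\tau\alpha>0=\deg^\tau 1$, and clause (i) gives $1<_U\alpha$.

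The substance of the lemma is axiom (ii), compatibility with two-sided multiplication. Here I would first record how the three functions transform under $\alpha\mapsto\beta\alpha\gamma$ for fixed $\beta,\gamma\in\mf{M}$. The $\tau$-degree and the non-$U$-length are plainly additive,
\[
\deg^\tau(\beta\alpha\gamma)=\deg^\tau\beta+\deg^\tau\alpha+\deg^\tau\gamma,\qquad l^U(\beta\alpha\gamma)=l^U\beta+l^U\alpha+l^U\gamma,
\]
while a short bookkeeping of prefixes gives
\[
k^U(\beta\alpha\gamma)=k^U(\alpha)+\deg^\tau(\beta)\,l^U(\alpha)+\deg^\tau(\alpha)\,l^U(\gamma)+C(\beta,\gamma),
\]
where $C(\beta,\gamma)=k^U(\beta)+k^U(\gamma)+\deg^\tau(\beta)\,l^U(\gamma)$ depends only on $\beta$ and $\gamma$. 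Granting these, I would argue by cases on which clause of Definition \ref{specialorder} witnesses $\alpha<_U\alpha'$. If clause (i) or (ii) applies, additivity of $\deg^\tau$ (resp.\ of $\deg^\tau$ and $l^U$) transports the strict inequality to $\beta\alpha\gamma$ and $\beta\alpha'\gamma$, so the same clause places them in the right order. If clause (iii) applies, then $\deg^\tau\alpha=\deg^\tau\alpha'$ and $l^U\alpha=l^U\alpha'$, whence the two $\alpha$-dependent correction terms in the displayed formula cancel and $k^U(\beta\alpha\gamma)-k^U(\beta\alpha'\gamma)=k^U(\alpha)-k^U(\alpha')<0$; again clause (iii) gives $\beta\alpha\gamma<_U\beta\alpha'\gamma$. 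Finally, if clause (iv) applies, then $\deg^\tau$, $l^U$ and $k^U$ all agree on $\beta\alpha\gamma$ and $\beta\alpha'\gamma$, and since the lexicographic order $<'$ is multiplicative (Example \ref{lexicographicorderexam}) we get $\beta\alpha\gamma<'\beta\alpha'\gamma$, so clause (iv) applies to the products. This exhausts the cases and proves axiom (ii).

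The one step that deserves care — and the only place where anything beyond routine additivity is used — is the prefix computation underlying the formula for $k^U(\beta\alpha\gamma)$: one must check that in $\beta\alpha\gamma$ each non-$U$ letter of $\alpha$ contributes its old prefix weight plus the constant $\deg^\tau(\beta)$, and each non-$U$ letter of $\gamma$ contributes its old prefix weight plus the constant $\deg^\tau(\beta)+\deg^\tau(\alpha)$, while the non-$U$ letters of $\beta$ contribute exactly $k^U(\beta)$. Summing over the $l^U(\alpha)$ relevant positions of $\alpha$ and the $l^U(\gamma)$ of $\gamma$ produces precisely the terms $\deg^\tau(\beta)\,l^U(\alpha)$ and $\deg^\tau(\alpha)\,l^U(\gamma)$, which are exactly the ones that vanish from the difference once $\deg^\tau$ and $l^U$ are known to coincide on $\alpha$ and $\alpha'$. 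Everything else in the verification is formal.
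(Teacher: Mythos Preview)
Your proof is correct and follows essentially the same approach as the paper: both verify axiom (ii) by case analysis on which clause of Definition \ref{specialorder} separates $\alpha$ and $\alpha'$, the key computation being the behaviour of $k^U$ under concatenation (the paper records it via the two-term identity $k^U(\delta\varepsilon)=k^U(\delta)+l^U(\varepsilon)\deg^\tau\delta+k^U(\varepsilon)$ applied twice, while you expand $k^U(\beta\alpha\gamma)$ in one step, which is the same calculation). Your explicit remark that $<_U$ is a total order is a small but welcome addition that the paper leaves implicit.
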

\begin{proof}
 Clearly $1<_U\alpha$ for any $\alpha\in\mf{M}$. Now assume, that $\alpha, \alpha', \beta, \gamma\in\mf{M}$ where $\alpha <_U \alpha'$. If $\alpha, \alpha'$ satisfy condition (i) or (ii) of \ref{specialorder} respectively, then obviously the same holds for $\beta \alpha \gamma, \beta \alpha' \gamma$. Hence, assume that $\alpha, \alpha'$ satisfy condition (iii), i.e.\ $\deg^\tau \alpha = \deg^\tau \alpha',\ l^U(\alpha) = l^U(\alpha')$ and $k^U(\alpha) < k^U(\alpha')$. It follows that $\deg^\tau (\beta \alpha \gamma) = \deg^\tau (\beta \alpha' \gamma)$ and $l^U(\beta \alpha \gamma) = l^U(\beta \alpha' \gamma)$. Noting that for two monomials $\delta,\varepsilon\in\mf{M}$
\[
 k^U(\delta \varepsilon) = k^U(\delta) + l^U(\varepsilon) \deg^\tau \delta + k^U(\varepsilon)
\]
we conclude that
\begin{eqnarray*}
 k^U(\beta \alpha \gamma) & = & k^U(\beta) + l^U(\alpha \gamma)\deg^\tau \beta + k^U(\alpha \gamma) \\
& = & k^U(\beta) + l^U(\alpha \gamma)\deg^\tau(\beta) + k^U(\alpha) + l^U(\gamma)\deg^\tau(\alpha) + k^U(\gamma) \\
& < & k^U(\beta) + l^U(\alpha' \gamma)\deg^\tau(\beta) + k^U(\alpha') + l^U(\gamma)\deg^\tau(\alpha') + k^U(\gamma) \\
& = & k^U(\beta \alpha' \gamma)
\end{eqnarray*}
and therefore $\beta \alpha \gamma <_U \beta \alpha' \gamma$ by condition (iii). Finally, the same argument shows that if $\alpha, \alpha'$ satisfy condition (iv), then this also holds for $\beta \alpha \gamma, \beta \alpha' \gamma$.
\end{proof}

\section{Higher Massey products and cohomological criteria for mildness}

By definition a finitely presented pro-$p$-group is mild if it possesses a strongly free presentation in terms of generators and relations. Since in general a mild pro-$p$-group also admits presentations which are \it not \rm strongly free, it is desirable to have sufficient criteria for mildness that do not depend on the choice of a specific minimal generating system for a group. As mentioned in the introduction, such a criterion is given by the \it cup-product criterion \rm proven by A.\ Schmidt (cf.\ \cite{AS2}, Th.6.2) using Labute's results on strongly free sequences in free Lie algebras. As a necessary condition, the cup-product $H^1(G,\F_p)\otimes H^1(G,\F_p)\stackrel{\cup}{\longrightarrow} H^2(G,\F_p)$ needs to be surjective. The main goal of this section is to prove Theorem \ref{cohomologicalcrit}, which gives a generalization to arbitrary higher \f{Massey products}. In particular, the cup-product can be trivial.

Massey products have been introduced by W.S.\ Massey as higher analogs of the cup-product in algebraic topology. They can be studied in a context as general as the cohomology of complexes, i.e.\ they can be defined for various cohomology theories (e.g.\ see \cite{DK}, \cite{JPM} and \cite{CD}). For the applications we have in mind, we introduce them for group cohomology of pro-$p$-groups with $\F_p$-coefficients.

Let $G$ be a pro-$p$-group. Recall that me make the notational convention $H^i(G)=H^i(G,\F_p)$. We denote by $\CC^*(G)$ the standard inhomogeneous cochain complex (e.g.\ see \cite{NSW}, Ch.I, §2) for the trivial $G$-module $\F_p$. 

\begin{defi}
 Let $n\ge 2$ and $\alpha_1,\ldots, \alpha_n\in H^1(G)$. We say that the \f{$n$-th Massey product $\langle \alpha_1,\ldots, \alpha_n\rangle_n$} is \f{defined} if there is a collection
\[
 \mc{A}=\{a_{ij}\in \CC^1(G)\ |\ 1\le i,j\le n,\ (i,j)\neq (1,n)\}
\]
(called a \f{defining system} for $\langle \alpha_1,\ldots, \alpha_n\rangle_n$), such that the following conditions hold:
\begin{itemize}
 \item[\rm (i)] $a_{ii}$ is a representative of the cohomology class $\alpha_i,\ 1\le i\le n$.
 \item[\rm (ii)] For $1\le i < j\le n,\ (i,j)\neq (1,n)$ it holds that
 \[
  \partial^2 (a_{ij}) = \sum_{l=i}^{j-1} a_{il}\cup a_{(l+1)j}
 \]
\end{itemize}
where $\partial^2$ denotes the coboundary operator $\partial^2: \CC^1(G)\longrightarrow \CC^2(G)$.
If $\mc{A}$ is a defining system for $\langle \alpha_1,\ldots, \alpha_n\rangle_n$, the $2$-cochain
\[
 b_\mc{A} = \sum_{l=1}^{n-1} a_{1l}\cup a_{(l+1)n}
\]
is a cocyle and we denote its class in $H^2(G)$ by $\langle \alpha_1,\ldots, \alpha_n\rangle_\mc{A}$. We set
\[
 \langle \alpha_1,\ldots, \alpha_n\rangle_n = \bigcup_{\mc{A}} \langle \alpha_1,\ldots, \alpha_n\rangle_\mc{A}
\]
where $\mc{A}$ runs over all defining systems. The Massey product $\langle \alpha_1,\ldots, \alpha_n\rangle_n$ is called \f{uniquely defined} if $\#\langle \alpha_1,\ldots, \alpha_n\rangle_n=1$. The $n$-th Massey product is \f{uniquely defined for $G$} if $\langle \alpha_1,\ldots, \alpha_n\rangle_n$ is uniquely defined for all $\alpha_1,\ldots, \alpha_n\in H^1(G)$.
\end{defi}

It can be shown that the $n$-th Massey product is uniquely defined if the Massey products of lower order are uniquely defined and identically zero. More precisely, we have the following

\begin{prop}
 Let $n\ge 2$ and $\alpha_1,\ldots, \alpha_n\in H^1(G)$.
\begin{itemize}
  \item[\rm (i)] For $n=2$ the Massey product $\langle \alpha_1,\alpha_2\rangle_2$ is uniquely defined and given by the cup-product, i.e.\
\[
 \langle \alpha_1,\alpha_2\rangle_2 = \alpha_1 \cup \alpha_2.
\]
\item[\rm (ii)] Let $n\ge 2$. Assume that for all $2\le l<n$ and all $\alpha_1,\ldots, \alpha_l$ the $l$-th Massey product $\langle \alpha_1,\ldots, \alpha_l\rangle_l=0$ is uniquely defined and given by the zero class in $H^2(G)$. Then for all $\beta_1,\ldots, \beta_n\in H^1(G)$ the $n$-th Massey product $\langle \beta_1,\ldots, \beta_n \rangle_n$ is also uniquely defined and yields a multilinear map (of $\F_p$-vector spaces)
\[
\langle \cdot, \ldots, \cdot \rangle_n: H^1(G)^n\longrightarrow H^2(G).
\]
\end{itemize}
\end{prop}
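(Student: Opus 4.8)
The plan is to verify (i) directly from the definition, and to establish (ii) in three stages: produce a defining system, show the resulting class is independent of it, and deduce multilinearity. For (i): a defining system for $\langle\alpha_1,\alpha_2\rangle_2$ consists only of the diagonal cochains $a_{11},a_{22}$, because the one index $(1,2)$ with $1<2$ is exactly $(1,n)$ and is excluded, so condition (ii) of the definition is vacuous. Hence $b_\mc{A}=a_{11}\cup a_{22}$ for every defining system $\mc{A}$, and its class in $H^2(G)$ is $\alpha_1\cup\alpha_2$, independent of the chosen representatives because the cup product descends to cohomology --- indeed $\CC^0(G)=\F_p$ is a trivial module, so $\partial^1=0$ and $B^1(G)=0$, whence $H^1(G)=Z^1(G)$ and the representatives $a_{ii}$ are unique to begin with. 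Thus $\langle\alpha_1,\alpha_2\rangle_2=\{\alpha_1\cup\alpha_2\}$.

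For (ii), fix $\beta_1,\dots,\beta_n\in H^1(G)$. To see that a defining system exists I would build $\mc{A}=(a_{ij})$ by induction on $j-i$: set $a_{ii}$ to be the unique cocycle representing $\beta_i$, and having chosen all $a_{pq}$ with $q-p<k$, pick $i<j$ with $j-i=k$ and $(i,j)\neq(1,n)$, which forces $k\le n-2$. The $2$-cochain $z_{ij}=\sum_{l=i}^{j-1}a_{il}\cup a_{(l+1)j}$ is then a cocycle: apply the Leibniz rule $\partial(u\cup v)=\partial u\cup v-u\cup\partial v$ termwise, substitute the relations already imposed on $\partial^2 a_{il}$ and $\partial^2 a_{(l+1)j}$, and cancel in pairs using associativity of $\cup$. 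Moreover $\{a_{pq}:i\le p\le q\le j,\ (p,q)\neq(i,j)\}$ is a defining system for the $(k{+}1)$-fold product $\langle\beta_i,\dots,\beta_j\rangle_{k+1}$ and $z_{ij}$ represents its value; since $2\le k+1\le n-1$, the hypothesis gives $[z_{ij}]=0$, so $z_{ij}=\partial^2 a_{ij}$ for a suitable $a_{ij}\in\CC^1(G)$. This completes the induction, so $\langle\beta_1,\dots,\beta_n\rangle_n\neq\emptyset$.

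The heart of (ii) is unique definedness. Given two defining systems $\mc{A},\mc{A}'$ I must show that $b_{\mc{A}'}-b_\mc{A}$ is a coboundary. I would again induct on $j-i$, modifying $\mc{A}'$ by correction cochains so as to bring it into a normal position relative to $\mc{A}$; at each stage the discrepancy between the two defining relations in bidegree $(i,j)$, corrected by the terms already treated, is a $2$-cocycle representing a value of the $(j-i+1)$-fold Massey product of the $\beta$'s with $j-i+1\le n-1$, hence zero by hypothesis, hence a coboundary, which one absorbs. Iterated up to bidegree $(1,n)$ this exhibits $b_{\mc{A}'}-b_\mc{A}$ as $\partial^2$ of an explicit $1$-cochain assembled from the $a_{1l}$, $a'_{(l+1)n}$ and the corrections. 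This bookkeeping --- the signs coming from the Leibniz rule, and keeping track of which defining relations are available at each step --- is the part I expect to be the main obstacle. (Alternatively one can identify defining systems with continuous homomorphisms of $G$ into a fixed quotient of the unipotent group $\mathbb U_{n+1}(\F_p)$ with prescribed superdiagonal, and deduce the vanishing of the indeterminacy from the solvability of the lower lifting problems, as in the standard treatments of matric Massey products.)

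Finally, unique definedness makes $\langle\beta_1,\dots,\beta_n\rangle_n$ a well-defined element of $H^2(G)$, and multilinearity follows by combining defining systems. For additivity in the $i$-th argument: given defining systems for $\langle\dots,\alpha_i,\dots\rangle_n$ and $\langle\dots,\alpha_i',\dots\rangle_n$ which --- using the vanishing of the lower products --- one first arranges to coincide on every sub-interval of $\{1,\dots,n\}$ avoiding position $i$, one builds a defining system for $\langle\dots,\alpha_i+\alpha_i',\dots\rangle_n$ by placing $a_{ii}+a'_{ii}$ in position $i$ and adding the two systems, with suitable corrections, on the intervals through $i$; its $b$-cocycle then equals $b_\mc{A}+b_{\mc{A}'}$ modulo a coboundary, the cross-terms being coboundaries by the vanishing hypothesis. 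Homogeneity under $\F_p$-scalars is clear. Together these give the multilinear map $\langle\cdot,\dots,\cdot\rangle_n\colon H^1(G)^n\to H^2(G)$.
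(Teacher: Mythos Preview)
Your treatment of (i) is correct and matches the paper, which simply says it ``follows immediately from the definition.'' Your observation that $B^1(G)=0$ for trivial $\F_p$-coefficients, so that cocycle representatives of $H^1$-classes are unique, is a clean way to see there is no indeterminacy at all.

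For (ii), note that the paper does not give a proof: it refers the reader to Kraines \cite{DK}, Lemma~20 and to \cite{RF}, Lemma~6.2.4. Your sketch therefore goes well beyond the paper's own argument. The three-stage outline you give --- build a defining system by induction on $j-i$ using that each $z_{ij}$ represents a lower Massey product and is hence a coboundary; show independence of the defining system by a second induction that absorbs discrepancies into coboundaries; deduce multilinearity by splicing defining systems --- is exactly the content of the cited references, and your aside about interpreting defining systems as lifts to quotients of $\mathbb{U}_{n+1}(\F_p)$ is Dwyer's reformulation, also standard. The place you flag as the main obstacle, the sign and index bookkeeping in the uniqueness step, is genuine but purely mechanical; nothing conceptual is missing. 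So your proposal is correct, and in fact supplies what the paper outsources.
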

\begin{proof}
 Statement (i) follows immediately from the definition. For a proof of (ii) see \cite{DK}, Lemma 20 and \cite{RF}, Lemma 6.2.4.
\end{proof}

\begin{rema}
\label{masseycompatibility}
Higher Massey products satisfy the same functoriality properties as the cup-product. In particular, provided they are uniquely defined,  they commute with inflation, restriction and corestriction.
\end{rema}

Now assume that the pro-$p$-group $G$ is finitely presented with $d=h^1(G)$. Let 
\[
\begin{tikzpicture}[description/.style={fill=white,inner sep=2pt}, bij/.style={below,sloped,inner sep=1.5pt}]
\matrix (m) [matrix of math nodes, row sep=1.5em,
column sep=2.5em, text height=1.5ex, text depth=0.25ex]
{  1 & R & F & G & 1\\};
\path[->,font=\scriptsize]
(m-1-1) edge node[auto] {} (m-1-2)
(m-1-2) edge node[auto] {} (m-1-3)
(m-1-3) edge node[auto] {} (m-1-4)
(m-1-4) edge node[auto] {} (m-1-5);
\end{tikzpicture}
\]
be a minimal presentation of $G$ where $F$ is a free pro-$p$-group on generators $x_1, \ldots, x_d$. Let $\chi_1, \ldots, \chi_d\in H^1(F)=H^1(G)$ be the dual basis corresponding to $x_1, \ldots, x_d$. The five term exact sequence yields the isomorphism
\[
 \begin{tikzpicture}[description/.style={fill=white,inner sep=2pt}, bij/.style={above,sloped,inner sep=1.5pt}, column 1/.style={anchor=base east},column 2/.style={anchor=base west}]
 \matrix (m) [matrix of math nodes, row sep=1.5em,
 column sep=2.5em, text height=1.5ex, text depth=0.25ex]
 {\tg:\ H^1(R)^G  & H^2(G).\\
};
 \path[->,font=\scriptsize]
 (m-1-1) edge node[bij] {$\sim$} (m-1-2);
 \end{tikzpicture}
\]
Hence, every element $r \in R$ gives rise to the \f{trace map}
\[
 \begin{tikzpicture}[description/.style={fill=white,inner sep=2pt}, bij/.style={above,sloped,inner sep=1.5pt}, column 1/.style={anchor=base east},column 2/.style={anchor=base west}]
 \matrix (m) [matrix of math nodes, row sep=1.5em,
 column sep=2.5em, text height=1.5ex, text depth=0.25ex]
 {\tr_r:\ H^2(G) & \mathbb{F}_p,\\
  \varphi& (\tg^{-1}\varphi)(r).\\
};
 \path[->,font=\scriptsize]
 (m-1-1) edge node[auto] {} (m-1-2);
 \path[|->,font=\scriptsize]
 (m-2-1) edge node[auto] {} (m-2-2);
 \end{tikzpicture}
\]
If $r_1,\ldots, r_m$ is a minimal system of defining relations, i.e.\ a minimal system of generators of $R$ as closed normal subgroup of $F$, then $\tr_{r_1},\ldots, \tr_{r_m}$ is a basis of $H^2(G)^\vee$.

Recall that we have the topological isomorphism

\[
 \begin{tikzpicture}[description/.style={fill=white,inner sep=2pt}, bij/.style={above,sloped,inner sep=1.5pt}, column 1/.style={anchor=base east},column 2/.style={anchor=base west}]
 \matrix (m) [matrix of math nodes, row sep=1.5em,
 column sep=2.5em, text height=1.5ex, text depth=0.25ex]
 { \Omega_F  & \power,\ x_i &1+X_i.\\
};
 \path[->,font=\scriptsize]
 (m-1-1) edge node[bij] {$\sim$} (m-1-2);
 \path[|->,font=\scriptsize]
 (m-1-2) edge node[auto] {} (m-1-3);
 \end{tikzpicture}
\]
By $\psi: F\hookrightarrow \power$ we denote the composite  of the map $F\hookrightarrow\Omega_F, f\mapsto f-1$ with the above isomorphism, mapping $F$ into the augmentation ideal of $\power$. The element $\psi(f)$ is called \f{Magnus expansion} of $f\in F$. 

As noticed by H.\ Koch, there is a close connection between higher Massey products for $G$ and its defining relations lying in higher Zassenhaus filtration steps. In order to make this precise, we need the following notation: 

\begin{defi}\quad
\label{magnusdefi}
\begin{itemize}
 \item[\rm (i)] A \f{multi-index $I$ of height $d$ and length $|I|=k$} is a tuple of elements $I=(i_1, \ldots, i_k)\in\N^k$ where $k$ is a natural number and $1\le i_j\le d$ for $1\le j\le k$. We denote by $\mc{M}_d^k$ the set of all multi-indices of height $d$ and length $k$.
 \item[\rm (ii)] For any multi-index $I$ let $\varepsilon_{I,p}(f)\in\F_p$ denote the coefficient corresponding to $I$ in the Magnus expansion of $f$, i.e.
\[
 \psi(f)=\sum_I \varepsilon_{I,p}(f) X_I
\]
where $I$ runs over all multi-indices of height $d$ and $X_I$ denotes the monomial $X_I=X_{i_1}\cdots X_{i_l}$ for any $I=(i_1, \ldots, i_l)$.
\end{itemize}
\end{defi}

By definition we have $f\in F_{(n)},\ n\ge 1$ if and only if $\varepsilon_{I,p}(f)=0$ for all multi-indices $I$ of length $|I|<n$. We can now state the important

\begin{theo}
\label{epsilonmaps}
Let $G$ be a finitely presented pro-$p$-group and
\[
\begin{tikzpicture}[description/.style={fill=white,inner sep=2pt}, bij/.style={below,sloped,inner sep=1.5pt}]
\matrix (m) [matrix of math nodes, row sep=1.5em,
column sep=2.5em, text height=1.5ex, text depth=0.25ex]
{  1 & R & F & G & 1\\};
\path[->,font=\scriptsize]
(m-1-1) edge node[auto] {} (m-1-2)
(m-1-2) edge node[auto] {} (m-1-3)
(m-1-3) edge node[auto] {} (m-1-4)
(m-1-4) edge node[auto] {} (m-1-5);
\end{tikzpicture}
\]
be a minimal presentation. Assume that $R\subseteq F_{(n)}$ for some $n\ge 2$. Then for all $k\le n$ the $k$-fold Massey product
\[
 \langle \cdot, \ldots, \cdot\rangle_k: H^1(G)^k\longrightarrow H^2(G),\ 2\le k\le n
\]
is uniquely defined. Furthermore, for all multi-indices $I$ of height $d$ and length $1<|I|\le n$ we have the equality
\[
 \varepsilon_{I,p}(r) = (-1)^{|I|-1} \tr_r \langle \chi_I\rangle_{|I|}
\]
for all $r\in R$ where for $I=(i_1, \ldots, i_k)$ we have set $\chi_I = (\chi_{i_1},\ldots, \chi_{i_k})\in H^1(G)^k$.  In particular, for $1 < k < n$ the $k$-fold Massey product on $H^1(G)$ is identically zero.
\end{theo}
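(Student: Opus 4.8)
The plan is to argue by induction on $k$, running from $k=2$ up to $k=n$, establishing simultaneously three assertions at each level: that $\langle\cdot,\ldots,\cdot\rangle_k$ is uniquely defined on all of $H^1(G)^k$; that the displayed formula $\varepsilon_{I,p}(r)=(-1)^{|I|-1}\tr_r\langle\chi_I\rangle_{|I|}$ holds for every multi-index $I$ with $|I|=k$ and every $r\in R$; and, when $k<n$, that $\langle\cdot,\ldots,\cdot\rangle_k$ vanishes identically. The uniqueness assertion at level $k$ is immediate from the Proposition quoted above, since the inductive hypothesis supplies, for every $l$ with $2\le l<k\le n$, that $\langle\cdot,\ldots,\cdot\rangle_l$ is uniquely defined and (as $l<n$) identically zero. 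The vanishing assertion for $k<n$ is a formal consequence of the formula at level $k$: if $|I|=k<n$ then $\varepsilon_{I,p}(r)=0$ for every $r\in R$ because $R\subseteq F_{(n)}$, hence $\tr_r\langle\chi_I\rangle_k=0$ for all $r$; since the $\tr_{r_i}$ form a basis of $H^2(G)^\vee$ this forces $\langle\chi_I\rangle_k=0$, and multilinearity together with the fact that the $\chi_i$ form a basis of $H^1(G)=H^1(F)$ (minimality of the presentation) extends this to arbitrary tuples. Thus everything reduces to producing the formula at level $k$.

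The crucial construction, for a fixed multi-index $I=(i_1,\ldots,i_k)$ with $k\le n$, is an explicit homomorphism $\rho_I\colon F\to U_{k+1}(\F_p)$, where $U_{k+1}(\F_p)$ is the group of upper triangular unipotent $(k+1)\times(k+1)$-matrices over $\F_p$: declare the $(a,b)$-entry of $\rho_I(f)$, for $a<b$, to be the Magnus coefficient $\varepsilon_{(i_a,i_{a+1},\ldots,i_{b-1}),p}(f)$ corresponding to the indicated segment of $I$, with all diagonal entries $1$ and all entries below the diagonal $0$. That $\rho_I$ is a homomorphism is precisely the multiplicativity of the Magnus expansion: from the identity $\psi(fg)=\psi(f)+\psi(g)+\psi(f)\psi(g)$ in $\power$ one reads off, by comparing coefficients of the monomial $X_{i_a}\cdots X_{i_{b-1}}$, the relation $\varepsilon_{(i_a,\ldots,i_{b-1}),p}(fg)=\sum_{c=a}^{b}\varepsilon_{(i_a,\ldots,i_{c-1}),p}(f)\,\varepsilon_{(i_c,\ldots,i_{b-1}),p}(g)$, with empty segments contributing $1$, and this is exactly the entry of the matrix product $\rho_I(f)\rho_I(g)$. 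The $(a,a+1)$-entries of $\rho_I$ are the linear Magnus coefficients $\varepsilon_{(i_a),p}=\chi_{i_a}$, so $\rho_I$ realizes the superdiagonal data attached to the Massey product $\langle\chi_I\rangle_k$.

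Now I would use $R\subseteq F_{(n)}$. If $k<n$, then for $r\in R$ every entry of $\rho_I(r)$ above the diagonal, including the corner $\varepsilon_{I,p}(r)$, is a Magnus coefficient of a multi-index of length $<n$, hence zero, so $\rho_I(r)=\mathrm{id}$ and $\rho_I$ factors through $G$; translating its matrix entries into $1$-cochains on $G$ (with the standard sign twist $a_{ab}=(-1)^{b-a-1}\cdot(\text{entry})$ that matches the inhomogeneous coboundary relation $\partial^2 a_{ij}=\sum_l a_{il}\cup a_{(l+1)j}$ of a defining system against the homomorphism property of $\rho_I$) gives a defining system $\mc{A}$ for $\langle\chi_I\rangle_k$ with $\langle\chi_I\rangle_{\mc{A}}=0$; uniqueness then yields the vanishing invoked above. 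If $k=n$, then for $r\in R$ all entries of $\rho_I(r)$ above the diagonal except the corner are Magnus coefficients of length $<n$ and hence vanish, so $\rho_I(r)$ lies in the centre $Z\cong\F_p$ of $U_{n+1}(\F_p)$; consequently $\rho_I$ induces $\bar\rho_I\colon G\to U_{n+1}(\F_p)/Z$, which is a defining system for $\langle\chi_I\rangle_n$, and $\rho_I$ is a lift of $\bar\rho_I$ along the central extension to a homomorphism on $F$. By the obstruction-theoretic description of Massey products (Dwyer, \cite{DK}) together with the description of transgression in a group extension — matching the Massey cocycle $b_{\mc{A}}=\sum_{1<c<n+1}a_{1c}\cup a_{c,n+1}$ with the cocycle $(g,h)\mapsto\varepsilon_{I,p}\bigl(s(g)s(h)s(gh)^{-1}\bigr)$ built from a set-theoretic section $s\colon G\to F$ — the class $\langle\chi_I\rangle_n$ is, up to sign, the image under $\tg$ of the $G$-homomorphism $\rho_I|_R\colon r\mapsto\varepsilon_{I,p}(r)$ in $H^1(R)^G$; pairing with $r$ gives $\tr_r\langle\chi_I\rangle_n=\pm\,\varepsilon_{I,p}(r)$.

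The one remaining point, and the step I expect to be genuinely fiddly, is to pin down the sign as $(-1)^{|I|-1}$. This is a bookkeeping matter combining the sign in the passage from matrix entries to a defining system, the sign conventions built into the inhomogeneous coboundary and the cup product on $\CC^*(G)$ as fixed above, and the sign in the five-term exact sequence defining $\tg$; it must be carried out consistently rather than conceptually, and one checks it against the base case $k=2$, where the argument recovers the classical description of the cup product via relations modulo $F_{(3)}$. The resulting identity is of course the one obtained independently by Vogel and Morishita (\cite{DV2}, \cite{MM2}), and the argument sketched here is a streamlined repackaging of theirs adapted to the present setting.
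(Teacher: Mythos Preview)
The paper does not supply its own proof of this theorem; immediately after the statement it simply refers to \cite{DV2}, Prop.~1.2.6 and \cite{MM2}, Th.~2.2.2. Your sketch is a faithful outline of precisely that argument --- the unipotent upper-triangular representation $\rho_I\colon F\to U_{k+1}(\F_p)$ built from Magnus coefficients, its factoring through $G$ (resp.\ through $U_{n+1}(\F_p)/Z$) thanks to $R\subseteq F_{(n)}$, and the identification of the resulting obstruction class with the Massey product via Dwyer's description and the transgression --- so there is nothing to compare: you have reconstructed what the paper asks the reader to look up. The one point you rightly flag as delicate, the sign $(-1)^{|I|-1}$, is indeed pure bookkeeping in those references and is settled there by direct computation in the inhomogeneous cochain complex; your suggestion to anchor it against the classical $k=2$ case is exactly how one checks the conventions are aligned.
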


For a proof we refer to \cite{DV2}, Prop.1.2.6 or \cite{MM2}, Th.2.2.2 respectively. By the above theorem, if $R\subseteq F_{(n)}$, the $n$-fold Massey product of $G$ is uniquely defined and determined by the Magnus expansions of a minimal system of defining relations. 

We introduce the following invariant of $G$:

\begin{defi}
\label{zassinvdefi}
 Let $G$ be a finitely generated pro-$p$-group. We define the \f{Zassenhaus invariant} $\z(G)\in\N\cup\{\infty\}$ to be the supremum of all natural numbers $n$ satisfying one (and hence all) of the following equivalent conditions:
\begin{itemize}
 \item[\rm (i)] If $1\to R\to F\to G\to 1$ is a minimal presentation of $G$, then $R\subseteq F_{(n)}$.
 \item[\rm (ii)] If $1\to R\to F\to G\to 1$ is a minimal presentation of $G$, then the induced homomorphism of graded restricted Lie algebras $\xyalign\xymatrix{\gr F\ar@{->>}[r] & \gr G}$ is injective in degrees $< n$.
 \item[\rm (iii)] The $k$-fold Massey product $H^1(G)^k\to H^2(G)$ is uniquely defined and identically zero for $2\le k< n$.
\end{itemize}
\end{defi}

The equivalence (i)$\Leftrightarrow$(ii) is clear from the definition and (i)$\Leftrightarrow$(iii) is a direct consequence of \ref{epsilonmaps}. Obviously, by definition we have $\z(G)\ge 2$. Furthermore, $\z(G)=\infty$ if and only if $G$ is free.

The coefficients $\varepsilon_{I,p}(f)$ satisfy certain symmetry relations which by \ref{epsilonmaps} imply analogous relations for higher Massey products. In order to state them, we need the notion of \f{shuffles}:

\begin{defi}
 Let $a,b\ge 1$ be integers. A \f{$(a,b)$-shuffle} $f$ is a bijection $\xyalign\xymatrix{f: \{1,\ldots, a+b\} \ar[r]^-\sim & \{1,\ldots, a+b\}}$ such that $f(i)<f(j)$ if $i<j\le a$ or $a+1\le i <j$.
\end{defi}

The following \f{shuffle identities} have been conjectured by W.\ G.\ Dwyer in a topological setting (cf.\ \cite{WGD}, Conj.4.6) and proven in the cohomology of finitely presented pro-$p$-groups by D.\ Vogel (cf.\ \cite{DV2}, Cor.1.2.10):

\begin{prop}[$(a,b)$-shuffle identity]
\label{shuffleproperties}
Let $G$ be a finitely generated pro-$p$-group with Zassenhaus invariant $\mf{z}(G)\ge n$. Let $a,b\ge 1$ be integers satisfying $a+b=n$ and $\xi_1,\ldots, \xi_n\in H^1(G)$. Then
\[
 \sum_f \langle \xi_{f^{-1}(1)}, \ldots, \xi_{f^{-1}(n)} \rangle_n = 0
\]
where the sum is taken over all $(a,b)$-shuffles $f$.
\end{prop}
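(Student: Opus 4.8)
The plan is to reduce the identity to the relation between $n$-fold Massey products and Magnus expansion coefficients recorded in Theorem~\ref{epsilonmaps}. Since $\z(G)\ge n$, Definition~\ref{zassinvdefi}(i) provides a minimal presentation $1\to R\to F\to G\to 1$, with $F$ free on $x_1,\ldots,x_d$, such that $R\subseteq F_{(n)}$; by Definition~\ref{zassinvdefi}(iii) all lower Massey products vanish, so $\langle\cdot,\ldots,\cdot\rangle_n$ is a well-defined multilinear map on $H^1(G)$. Choose a minimal system of relations $r_1,\ldots,r_m$, so that $\tr_{r_1},\ldots,\tr_{r_m}$ is a basis of $H^2(G)^\vee$; it therefore suffices to prove
\[
\sum_f \tr_{r_k}\langle\xi_{f^{-1}(1)},\ldots,\xi_{f^{-1}(n)}\rangle_n=0,\qquad 1\le k\le m,
\]
the sum over all $(a,b)$-shuffles $f$. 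For fixed $k$, the left-hand side is a multilinear function of $(\xi_1,\ldots,\xi_n)$ (a sum of the multilinear map $\langle\cdot,\ldots,\cdot\rangle_n$ precomposed with permutations, followed by the linear functional $\tr_{r_k}$), so it is enough to treat $\xi_i=\chi_{j_i}$ running through the dual basis. Writing $I=(j_1,\ldots,j_n)$ and $f\cdot I=(j_{f^{-1}(1)},\ldots,j_{f^{-1}(n)})$, Theorem~\ref{epsilonmaps} then turns the claim (up to the irrelevant global sign $(-1)^{n-1}$) into $\sum_f\varepsilon_{f\cdot I,p}(r_k)=0$.

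The next step is to unwind the combinatorics of this sum. Split $I=(I',I'')$ with $I'=(j_1,\ldots,j_a)$ and $I''=(j_{a+1},\ldots,j_n)$. An $(a,b)$-shuffle $f$ interleaves $I'$ and $I''$, preserving the internal order of each and placing the letters of $I'$ at the positions $\{f(1),\ldots,f(a)\}$; as $f$ varies, this $a$-subset of $\{1,\ldots,n\}$ runs over all $a$-subsets. Hence $\sum_f\varepsilon_{f\cdot I,p}(r_k)$ equals the sum of $\varepsilon_{K,p}(r_k)$ over all $\binom{n}{a}$ shuffles $K$ of $I'$ and $I''$ (counted with multiplicity). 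So everything comes down to showing that this shuffle sum of Magnus coefficients vanishes for $r_k\in F_{(n)}$.

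For that I would invoke the shuffle identity for Magnus coefficients: for every $g\in F$ and all multi-indices $I',I''$, the product $\varepsilon_{I',p}(g)\,\varepsilon_{I'',p}(g)$ equals the sum of $\varepsilon_{K,p}(g)$ over the shuffles $K$ of $I'$ and $I''$ plus a correction which is an $\F_p$-linear combination of coefficients $\varepsilon_{L,p}(g)$ with $|L|<|I'|+|I''|$. This is a reformulation of the fact that $1+X_i$, and hence the Magnus image $M(g)=1+\psi(g)\in\power$, is group-like for the coproduct on $\power$ determined multiplicatively by $\Delta(X_i)=X_i\otimes 1+1\otimes X_i+X_i\otimes X_i$ (equivalently, the coefficient functional $X_I\mapsto\varepsilon_{I,p}(g)$ is a character of the dual quasi-shuffle algebra); the correction terms are exactly the "collision" contributions in the quasi-shuffle expansion, each of strictly smaller length. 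This is the content of \cite{DV2}, Cor.~1.2.10, and can alternatively be verified by induction on word length from $M(gh)=M(g)M(h)$. Applying it with $g=r_k\in F_{(n)}$ and $|I'|=a<n$, $|I''|=b<n$: every correction index $L$ has $|L|<n$, so $\varepsilon_{L,p}(r_k)=0$, and likewise $\varepsilon_{I',p}(r_k)=\varepsilon_{I'',p}(r_k)=0$; hence the shuffle sum $\sum_{K}\varepsilon_{K,p}(r_k)$ vanishes, which completes the argument.

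The only genuine obstacle is the Magnus shuffle identity itself — the Hopf-algebra bookkeeping on $\power$ that expresses group-likeness of the Magnus expansion. Everything else is a matter of unwinding Theorem~\ref{epsilonmaps}, using multilinearity and the basis $\tr_{r_1},\ldots,\tr_{r_m}$ of $H^2(G)^\vee$, together with the elementary observation that in the lowest relevant degree the quasi-shuffle degenerates to the ordinary shuffle while all of its error terms (and the right-hand product $\varepsilon_{I',p}\varepsilon_{I'',p}$) are annihilated by the hypothesis $R\subseteq F_{(n)}$. In particular no assumption on $p$ is needed, so the case $p=2$ is covered on the same footing.
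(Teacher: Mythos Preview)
The paper does not give its own proof of this proposition; it simply records the result and cites \cite{DV2}, Cor.~1.2.10. Your sketch is correct and is essentially the argument underlying that reference: reduce via Theorem~\ref{epsilonmaps} to a shuffle identity for Magnus coefficients, then observe that the Magnus image $M(g)=1+\psi(g)$ is group-like for the coproduct determined by $\Delta(X_i)=X_i\otimes 1+1\otimes X_i+X_i\otimes X_i$, which yields the quasi-shuffle relation whose top-degree part is exactly the ordinary shuffle sum and whose lower-degree corrections (along with the product $\varepsilon_{I',p}\varepsilon_{I'',p}$) vanish because $r\in F_{(n)}$ and $a,b<n$.

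One small point: you pass to a finite minimal system of relations $r_1,\ldots,r_m$, which presupposes $h^2(G)<\infty$, whereas the proposition only assumes $G$ finitely generated. The fix is painless: via the transgression isomorphism $H^2(G)\cong H^1(R)^G$, an element of $H^2(G)$ vanishes iff $\tr_r$ of it vanishes for every $r\in R$, and your Magnus-coefficient computation applies verbatim to each such $r$ (the formula in Theorem~\ref{epsilonmaps} holds for any $r\in R\subseteq F_{(n)}$, not just for a chosen generating set).
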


Note that for $n=2$ this simply amounts to the anti-commutativity of the cup-product. If $\z(G)=3$, the above proposition is equivalent to
\begin{eqnarray*}
\label{shuffle3}
 \langle \xi_1, \xi_2, \xi_3\rangle_3 + \langle \xi_2, \xi_3, \xi_1\rangle_3 + \langle \xi_3, \xi_1, \xi_2\rangle_3 = 0,\quad \langle \xi_1, \xi_2, \xi_3\rangle_3 = \langle \xi_3, \xi_2, \xi_1\rangle_3
\end{eqnarray*}
for all $\xi_1, \xi_2, \xi_3\in H^1(G)$. In particular, for $p\neq 3$ we have $\langle \xi_1, \xi_1, \xi_1\rangle_3 = 0$.

We can now state and prove our main result:

\begin{theo}
\label{cohomologicalcrit}
 Let $p$ be a prime number and $G$ a finitely presented pro-$p$-group with $n=\z(G)<\infty$. Assume that $H^1(G)$ admits a decomposition $H^1(G)=U\oplus V$ as $\F_p$-vector space such that for some natural number $e$ with $1\le e \le n-1$ the $n$-fold Massey product $\langle\cdot, \ldots, \cdot \rangle_n: H^1(G)^n \longrightarrow H^2(G)$ satisfies the following conditions:
\begin{itemize}
 \item[\rm (a)] $\langle \xi_1,\ldots, \xi_n \rangle_n =0$ for all tuples $(\xi_1,\ldots, \xi_n)\in H^1(G)^n$ such that $\#\{i\ |\ \xi_i\in V\}\ge n-e+1$.
 \item[\rm (b)] $\langle\cdot, \ldots, \cdot \rangle_n$ maps 
\[
U^{\otimes e} \otimes V^{\otimes n-e}
\]
surjectively onto $H^2(G)$.
\end{itemize}
Then $G$ is mild (with respect to the Zassenhaus filtration). In particular, $G$ is of cohomological dimension $\cd G=2$.
\end{theo}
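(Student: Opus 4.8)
The plan is to exhibit a strongly free presentation of $G$ with respect to the Zassenhaus filtration (i.e.\ with $\tau=(1,\dots,1)$) and then to invoke Theorem \ref{mainmildtheo} together with Definition \ref{milddefi}. First I would fix a basis $\chi_1,\dots,\chi_d$ of $H^1(G)$ subordinate to the decomposition $H^1(G)=U\oplus V$, say $\chi_1,\dots,\chi_s$ spanning $U$ and $\chi_{s+1},\dots,\chi_d$ spanning $V$, choose a minimal presentation $1\to R\to F\to G\to 1$ with $F$ free on the dual generators $x_1,\dots,x_d$, and write $X_U=\{X_1,\dots,X_s\}\subseteq X$ for the corresponding set of ``$U$-variables'' in $A=\F_p\langle X\rangle$. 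Since $\z(G)=n$ we have $R\subseteq F_{(n)}$, so the Magnus expansion of every $r\in R$ starts in degree $n$, and by Theorem \ref{epsilonmaps} its degree-$n$ part equals $(-1)^{n-1}\sum_{|I|=n}\tr_r(\langle\chi_I\rangle_n)\,X_I$; when nonzero this homogeneous polynomial is the initial form of $r$. Hypothesis (a) says precisely that every monomial occurring in this degree-$n$ part involves at least $e$ variables from $X_U$, while hypothesis (b) says that the classes $\langle\chi_J\rangle_n$, as $J$ ranges over the \emph{block multi-indices} of length $n$ --- those with first $e$ entries in $\{1,\dots,s\}$ and last $n-e$ entries in $\{s+1,\dots,d\}$ --- span $H^2(G)$.

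Next I would choose the defining relations by a Gaussian-elimination argument tuned to the order $<_{X_U}$ of Definition \ref{specialorder} associated to $X_U$ (which is multiplicative by Lemma \ref{specialorderproof}). For any minimal system $r_1,\dots,r_m$ of defining relations the trace maps $\tr_{r_1},\dots,\tr_{r_m}$ form a basis of $H^2(G)^\vee$, and any invertible row operation on the tuple $(\tr_{r_i})_i$ is realised by another minimal system of defining relations. Consider the matrix with rows indexed by $i$ and columns by the block multi-indices $J$ --- ordered so that $J$ \emph{decreases} with respect to $<_{X_U}$ --- and entries $\tr_{r_i}(\langle\chi_J\rangle_n)$, whose rows are linearly independent by (b); after row-reduction I may assume that its leftmost nonzero entries lie in $m$ distinct columns $X_{J_1},\dots,X_{J_m}$, which are then pairwise distinct block monomials. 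The key point is that for such a choice the high term of the initial form $\rho_i$ of $r_i$ with respect to $<_{X_U}$ is exactly $X_{J_i}$: by (a) every monomial of $\rho_i$ carries at least $e$ variables from $X_U$, and the comparison keys $l^{X_U}$ and $k^{X_U}$ of $<_{X_U}$ are designed so that among homogeneous monomials of degree $n$ with at least $e$ such variables the $<_{X_U}$-largest are precisely the block monomials; since $<_{X_U}$ restricts to the lexicographic order on block monomials, the high term of $\rho_i$ is its $<_{X_U}$-largest block monomial with nonzero coefficient, which by the echelon form is $X_{J_i}$. In particular $\rho_i\ne 0$, so $\rho_i$ is indeed the initial form of $r_i$.

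It then remains to verify that $X_{J_1},\dots,X_{J_m}$ is a combinatorially free sequence in the sense of Definition \ref{combinatoriallyfreedefi}: Anick's criterion, Theorem \ref{anickscrit}, then yields that $\rho_1,\dots,\rho_m$ is strongly free, so $G=\langle x_1,\dots,x_d\mid r_1,\dots,r_m\rangle$ is a strongly free presentation with respect to the Zassenhaus filtration, whence $G$ is mild and $\cd G=2$ by Theorem \ref{mainmildtheo} (here $m=h^2(G)\ge 1$, since $\z(G)<\infty$ forces $G$ to be non-free). Combinatorial freeness is essentially forced by the block shape: the $X_{J_i}$ all have length $n$ and are pairwise distinct, so none is a submonomial of another; and a proper nontrivial prefix of a block monomial $u_1\cdots u_e v_1\cdots v_{n-e}$ of length $k$ has exactly $\min(k,e)$ leading $X_U$-variables, whereas a proper nontrivial suffix of length $k$ has exactly $\max(0,k-(n-e))$ leading $X_U$-variables, and $\min(k,e)=\max(0,k-(n-e))$ has no solution with $1\le k\le n-1$ (this uses $1\le e\le n-1$, so that both blocks are nonempty), so no beginning of any $X_{J_i}$ can coincide with an ending of any $X_{J_j}$.

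I expect the second step to be the main obstacle: it is there that the precise design of $<_{X_U}$, both hypotheses (a) and (b), and the Magnus-coefficient identity of Theorem \ref{epsilonmaps} must be combined, and that one has to check carefully that the Gaussian elimination can genuinely be carried out within the family of minimal systems of defining relations. Once this is in place, the combinatorial-freeness check, Anick's criterion and Theorem \ref{mainmildtheo} are essentially bookkeeping.
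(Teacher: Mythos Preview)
Your proposal is correct and follows essentially the same route as the paper's proof: choose a basis of $H^1(G)$ adapted to $U\oplus V$, use Theorem~\ref{epsilonmaps} to read off the degree-$n$ Magnus coefficients as Massey products, row-reduce over the block multi-indices to select relations whose $<_U$-high terms are distinct block monomials, and conclude via Anick's criterion. The only cosmetic difference is that the paper first chooses a basis of $H^2(G)$ putting the Massey matrix in echelon form and \emph{then} picks relations dual to it via the transgression, whereas you perform row operations on an initially chosen system of relations; these are equivalent, and your verification of combinatorial freeness by counting $U$-variables in prefixes versus suffixes is in fact slightly more explicit than the paper's one-line justification.
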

\begin{proof}
We set $d=h^1(G),\ m=h^2(G)$ and $c=\dimfp U$. Furthermore, we choose bases $\chi_1,\ldots, \chi_c$ and $\chi_{c+1},\ldots, \chi_d$ of $U$ and $V$ respectively. Let $\overline{x}_1, \ldots, \overline{x}_d\in G$ be arbitrary lifts of the basis of $H_1(G)=G/G^p[G,G]$ dual to $\{\chi_1, \ldots, \chi_d\}$. Then $G$ admits a minimal presentation
\[
 \begin{tikzpicture}[description/.style={fill=white,inner sep=2pt}, bij/.style={above,sloped,inner sep=1.5pt}, column 1/.style={anchor=base east},column 2/.style={anchor=base west}]
 \matrix (m) [matrix of math nodes, row sep=1.5em,
 column sep=2.5em, text height=1.5ex, text depth=0.25ex]
 {1 & R & F & G & 1\\
};
 \path[->,font=\scriptsize]
 (m-1-1) edge node[auto] {} (m-1-2)
 (m-1-2) edge node[auto] {} (m-1-3)
 (m-1-3) edge node[auto] {$\pi$} (m-1-4)
 (m-1-4) edge node[auto] {} (m-1-5);
 \end{tikzpicture}
\]
where $F$ is the free pro-$p$-group on generators $x_1, \ldots, x_d$ and $\pi$ maps $x_i$ to $\overline{x}_i$. Recall that we have the embedding $\gr F\hookrightarrow \polyx$ (of graded restricted Lie algebras) where $X=\{X_1,\ldots, X_d\}$, mapping the initial form of $x_i$ to $X_i$. Let $<$ denote the natural order on $X$, i.e.\ $X_1 < X_2 < \ldots < X_d$. We order the set of monomials by the order $<_U$ introduced in \ref{specialorder} where by abuse of notation we denote the subset $\{X_1,\ldots, X_c\}\subset X$ also by $U$. We have seen in \ref{specialorderproof} that $<_U$ is in fact a multiplicative order. Consider the following subset of the set $\mc{M}_d^n$ of multi-indices of height $d$ and length $n$:
\[
 B=\{(i_1,\ldots, i_n)\in \mc{M}_d^n |\ i_1,\ldots, i_e\le c\ \mbox{and}\ i_{e+1},\ldots, i_n>c\}.
\]
Note that $b:=\# B = \dimfp  (U^{\otimes e} \otimes V^{\otimes n-e})=c^e (d-c)^{n-e}$. Since by condition $(b)$ the homomorphism $\varphi: U^{\otimes e} \otimes V^{\otimes n-e}\longrightarrow H^2(G)$ is surjective, there exists a basis $C=\{y_1,\ldots, y_m\}$ of $H^2(G)$, such that the transformation matrix $M$ of $\varphi$ with respect to the bases
\[
 \mc{B}=\{\chi_{i_1}\otimes \cdots \otimes \chi_{i_n}\ |\ (i_1,\ldots, i_n)\in B\}
\]
(which we order via $<_U$) and $C$ is of the form
\begin{eqnarray*}
\label{echelonform}
M= \begin{pmatrix} 
0 & \cdots & 0 & 1 & * & \cdots & * & * & * & \cdots & * & \cdots & * & * & \cdots & *\\ 
0 & \cdots & 0 & 0 & 0 & \cdots & 0 & 1 & * & \cdots & * & \cdots & * & * & \cdots & *\\
\vdots & \vdots  & \vdots  & \vdots  & \vdots  & \vdots  & \vdots  & \vdots  & \vdots  & \vdots  & \vdots & \ddots & \vdots & \vdots  & \vdots  & \vdots \\
0 & \cdots & 0 & 0 & 0 & \cdots & 0 & 0 & 0 & \cdots & 0 & \cdots & 1 & * & \cdots & *\\
\end{pmatrix}.
\end{eqnarray*}
In fact, first choose an arbitrary basis of $H^2(G)$ and transform the corresponding matrix $M'$ of $\varphi$ into row echelon form by applying elementary row operations, noting that $\operatorname{rank} M' = m$. For $1\le j\le m$ and $I\in B$ we denote by $m_{j,I}$ the coefficient in the $j$-th row and the column corresponding to $I$ of $M$. We choose $r_1, \ldots, r_m\in R$ such that for $1\le j\le m$ the image $\ol{r}_j$ of $r_j$ in $R/R^p[F,R]$ is dual to $\tg^{-1}(y_i)\in H^1(R)^G=(R/R^p[F,R])^\vee$ where as above $\tg$ denotes the transgression isomorphism
\[
 \begin{tikzpicture}[description/.style={fill=white,inner sep=2pt}, bij/.style={above,sloped,inner sep=1.5pt}, column 1/.style={anchor=base east},column 2/.style={anchor=base west}]
 \matrix (m) [matrix of math nodes, row sep=1.5em,
 column sep=2.5em, text height=1.5ex, text depth=0.25ex]
 {\tg:\ H^1(R)^G  & H^2(G).\\
};
 \path[->,font=\scriptsize]
 (m-1-1) edge node[bij] {$\sim$} (m-1-2);
 \end{tikzpicture}
\]
Hence, $r_1,\ldots, r_m$ is a minimal system of defining relations of $G$ and the trace map $\tr_{r_j}\in H^2(G)^\vee$ is the linear form dual to $y_j$. Let $\rho_j\in \gr F$ be the initial form of $r_j$. We claim that the sequence $\rho_1, \ldots, \rho_m$ is strongly free. Since $\z(G)=n$, by \ref{epsilonmaps} and the definition of the matrix $M=(m_{j,I})$, we have
\[
\varepsilon_{I,p}(r_j) = (-1)^{|I|-1} \tr_{r_j} \langle \chi_I\rangle_n = (-1)^{|I|-1} m_{j,I},\ \mbox{for all}\ 1\le j\le m,\ I\in B.
\]
First note that since every row of $M$ is non-zero, this implies $\rho_j\in gr_n F=F_{(n)}/F_{(n+1)}$, i.e.\ the $\rho_j$ are homogeneous polynomials of degree $n$.
Noting in addition that the initial form $\rho_j$ is obtained by leaving out all monomials of degree $> n$ in the Magnus expansion of $r_j$, condition (a) gives rise to
\[
 \varepsilon_{I,p}(r_j) = (-1)^{|I|-1} \tr_{r_j} \langle \chi_I\rangle_n = 0
\]
for $1\le j\le m$ and any multi-index $I=(i_1,\ldots, i_n)$ satisfying the condition $\#\{k\ |\ i_k > c\}\ge n-e+1$, which is equivalent to $\#\{k\ |\ i_k \le c\}\le e-1$. That is, every monomial of $\rho_j$ contains at least $e$ factors $X_i,\ i\le c$. Since the monomials in $X_I, I\in B$ have the property that the $X_i, i> c$ are on the right end, by definition of the ordering $<_U$ (cf.\ \ref{specialorder}) we conclude that the high terms $\tilde{\rho}_j$ of the $\rho_j$ with respect to $<_U$ are monomials of the form $\tilde{\rho}_j = X_{I_j}$ for some $I_j$ in $B$. Furthermore, taking into account that the matrix $M$ is in row echelon form, we see that the $\tilde{\rho}_j$ are pairwise distinct. In particular, $\tilde{\rho}_j$ is not contained in $\tilde{\rho}_k$ for $1\le j,k\le m,\ j\neq k$ and since each $\tilde{\rho}_j$ begins (from the left) with $e$ variables $X_i, i\le c$ and ends with $n-e$ variables $X_i, i > c$, the beginning of $\tilde{\rho}_j$ is never the ending of $\tilde{\rho}_k$ for $1\le j,k\le m$. In other words, the sequence $\tilde{\rho}_1,\ldots, \tilde{\rho}_m$ is combinatorially free and by Anick's criterion \ref{anickscrit} we conclude that $\rho_1, \ldots, \rho_m$ is strongly free.
\end{proof}

\begin{rema}\quad
\label{cohomologicalcritexamples}
\begin{itemize}
 \item[\rm (i)] If $\z(G)=2$, the above statement is the following: Assume that \linebreak $H^1(G)=U\oplus V$ and the cup-product $H^1(G)\otimes H^1(G)\stackrel{\cup}{\to} H^2(G)$ is trivial on $V\otimes V$ and maps $U\otimes V$ surjectively onto $H^2(G)$. Then $G$ is mild. Hence, we have obtained new proof for the cup-product criterion including the case $p=2$.
 \item[\rm (ii)] Next consider the case $\z(G)=3$ and apply \ref{cohomologicalcrit} for $e=1$. We obtain that $G$ is mild if $H^1(G)=U\oplus V$ and the triple Massey product 
is trivial on $V\otimes V\otimes V$ and maps $U\otimes V\otimes V$ surjectively onto $H^2(G)$. A straightforward application of \cite{DV2}, Prop.1.3.3 shows that these conditions are equivalent to the following: $G$ possesses a minimal presentation $G=\langle x_1,\ldots, x_d\ |\ r_1,\ldots, r_m\rangle$ where  
\[ 
r_n\equiv\left\{ \begin{array}{ll} 
\displaystyle\prod_{\stackrels{1\le i < j\le d,}{1\le k\le j}} [[x_i, x_j], x_k]^{a_{ijk}^n} \mod F_{(4)},& \mbox{if}\ p\neq 3,\\
\displaystyle\prod_{1\le i\le d} x_i^{3a_i^n} \cdot \displaystyle\prod_{\stackrels{1\le i < j\le d,}{1\le k\le j}} [[x_i, x_j], x_k]^{a_{ijk}^n} \mod F_{(4)},& \mbox{if}\ p= 3\\
\end{array}\right.
\]
for $n=1, \ldots, m$ with $a_i^n, a_{ijk}^n\in\mathbb{F}_p$, such that for some $1\le c <d$:
\begin{itemize}
\item[\rm (I)] $a_{ijk}^n=0$ if  $c<i<j, c<k\le j$ and $1\le n\le m$,
 \item[\rm (II)] The $m\times c(d-c)^2$-matrix
\[
 (a_{ijk}^n)_{n,(ijk)},\quad 1\le n\le m,\ 1\le i\le c < k\le j\ \mbox{or}\ 1\le k\le c < i < j 
\]
has rank $m$.
 \item[\rm (III)] If $p=3$, then $a_i^n=0$ if $c<i$ and $1\le n\le m$.
\end{itemize}
These conditions resemble \cite{AS3}, Th.5.5 and \cite{PF2}, Cor.6.5 respectively, where similar statements for relations of degree $2$ are given.

Applying \ref{cohomologicalcrit} for $e=2$, we find that $G$ is mild if $H^1(G)=U\oplus V$, the triple Massey product is trivial on the subspaces 
\[
H^1(G)\otimes V\otimes V,\quad V\otimes H^1(G)\otimes V,\quad V\otimes V\otimes H^1(G)
\]
 and maps $U\otimes U\otimes V$ surjectively onto $H^2(G)$. As in the case $e=1$ these conditions translate into basic commutators.
\end{itemize}
\end{rema}

We end this section by giving some remarks on arithmetic examples: In \cite{JL}, J.\ Labute applied the theory of mild pro-$p$-groups to give the first examples of Galois groups of the form $G_S(p)=\Gal(\Q_S(p)|\Q)$ satisfying $\cd G_S(p)= 2$ where $\Q_S(p)$ is the maximal pro-$p$-extension $\Q$ unramified outside a finite set $S$ of prime numbers different from $p$. A.\ Schmidt proved that for a general number field the sets $S$ such that $\cd G_S(p)= 2$ are cofinal among all finite sets $S$ even in the \f{tame case}, i.e.\ considering primes with residue characteristic different from $p$ only (the precise statement is stronger, see \cite{AS2}, Th.1.1 for details). An important ingredient in the proof is the cup-product criterion for mildness. The question naturally arises whether mild pro-$p$-groups with Zassenhaus invariant $\ge 3$ occur as arithmetical Galois groups of the form $G_S(p)$. For $p=2$ a positive answer is given by the following two examples studied in \cite{JGArith}, which actually satisfy the conditions given in \ref{cohomologicalcrit}:

\begin{exam}\quad
\label{arithmeticexamples}
\begin{itemize}
\item[(i)] Let $S=\{l_0,l_1,\ldots, l_n\}$ for some $n\ge 1$ and prime numbers $l_0=2,\ l_i\equiv 9\mod 16,\ i=1,\ldots, n$, such that the Legendre symbols satisfy
\[
 \left(\frac{l_i}{l_j}\right)_2=1,\ 1\le i,j\le n,\ i\neq j.
\]
Then $G_S(2)$ is a mild pro-$2$-group with generator rank $h^1(G_S(2))=$\linebreak $n+1$, relation rank $h^2(G_S(2))=n$ and Zassenhaus invariant $\z(G_S(2))=3$.
\item[(ii)]  Let $S=\{2, 17, 7489, 15809\}, T=\{5\}$. Then the Galois group $G_S^T(2)$ of the maximal pro-$2$-extension of $\Q$ unramified outside $S$ and completely decomposed at $T$ is a mild pro-$2$-group with $h^1(G_S^T(2))=h^2(G_S^T(2))=\z(G_S^T(2))=3$.
\end{itemize}
\end{exam}

Moreover, assuming the Leopoldt conjecture for $p=2$, the latter group $G_S^T(2)$ is a \f{fab pro-$2$-group}, i.e.\ the abelianization $H^{ab}=H/[H,H]$ is finite for every open subgroup $H\subseteq G_S^T(2)$. To the author's knowledge this yields the first known example of a mild fab group with Zassenhaus invariant $\ge 3$ and also the first example of a mild fab group with generator rank $\le 3$. J.\ Labute, C.\ Maire and J.\ Miná\v{c} have announced analogous results for odd $p$.

\section{One-relator pro-$p$-groups}

In the following two sections, we investigate the cohomological dimension of finitely generated pro-$p$-groups with a single defining relation. The structure of these groups is also of arithmetic interest, e.g.\ \f{Demu\v{s}kin groups} occur naturally as the Galois group of the maximal $p$-extension of a local field containing a primitive $p$-th root of unity or as the pro-$p$-completion of the (discrete) fundamental group of a compact Riemann surface. We will study pro-$p$-groups of \f{Demu\v{s}kin type} in section 6.

\begin{defi}
 A \f{one-relator pro-$p$-group} is a pro-$p$-group $G$ satisfying \linebreak $h^1(G)< \infty$ and $h^2(G)=1$, i.e.\ $G\cong F/(r)$ where $F$ is a finitely generated free pro-$p$-group and $r\in F_{(2)}$. 
\end{defi}

The goal of this section is the proof of Th.\ \ref{onerelatorgrouptheo}. A crucial ingredient is the structure of the restricted Lie algebra $\gr^\tau F$ for a finitely generated pro-$p$-group $F$.

Let $k$ be a field of characteristic $p$ and $\kpolyx$ the free associative algebra over $k$ on $X$. By $L(X), L_{res}(X)$ we denote the Lie subalgebra and the restricted Lie subalgebra generated by $X$ respectively. Then $L(X)$ is the free Lie algebra over $k$ on $X$, $L_{res}(X)$ is the free restricted Lie algebra over $k$ on $X$ and $\kpolyx$ is the universal enveloping algebra for both $L(X)$ and $L_{res}(X)$. We endow $L(X)$ and $L_{res}(X)$ with the grading induced by the natural grading on $\kpolyx$ given by $\deg X_i=\tau_i$. We recall the notion of \f{Hall commutators}:

\begin{defi}
 The set $C_n\subseteq L(X)$ of \f{Hall commutators of weight $n$} together with a total order $<$ is inductively defined as follows:
\begin{itemize}
 \item[\rm (i)] $C_1=\{X_1,\ldots, X_d\}$ with the ordering $X_1 > \ldots > X_d$.
 \item[\rm (ii)] $C_n$ is the set of all commutators $[c_1,c_2]$ where $c_1\in C_{n_1},\ c_2\in C_{n_2}$ such that $n_1+n_2=n$, $c_1>c_2$ and if $n_1\neq 1,\ c_1=[c_3,c_4]$ we have $c_2\ge c_4$. The set $C_n$ is ordered lexicographically, i.e.\ $[c_1,c_2]<[c_1',c_2']$ if and only if $c_1<c_1'$ or $c_1=c_1'$ and $c_2<c_2'$. Finally for $c\in C_n$ we set $d<c$ for any $d\in C_i,\ i<n$. 
\end{itemize}
We set $\mc{C}=\bigcup_{n\in\N} C_n$.
\end{defi}

The following lemma is well-known:
\pagebreak

\begin{lemm}
 \label{hallbasis}
Let $L(X)$ and $L_{res}(X)$ be endowed with the natural grading, i.e.\ $\tau_1=\ldots=\tau_d=1$.
\begin{itemize}
 \item[\rm (i)] The sets $C_n$ are $k$-vector space bases of $L(X)_n$. In particular, $\mc{C}=\bigcup_{n\in\N} \mc{C}_n$ is a $k$-basis of $L(X)$.
 \item[\rm (ii)] The sets
\[
\overline{C}_n= \displaystyle\dot{\bigcup}_{ip^j=n} (C_i)^{p^{j}}
\]
are $k$-bases of $L_{res}(X)_n$. In particular, $\overline{\mc{C}}=\bigcup_{n\in\N} \ol{C}_n$ is a $k$-basis of $L_{res}(X)$.
\end{itemize}
\end{lemm}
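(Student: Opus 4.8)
The plan is as follows. Part (i) is the classical Hall basis theorem, and I would essentially follow \cite{NB}, Ch.\,II. First one shows by induction on the weight that $C_n$ spans $L(X)_n$: an arbitrary iterated bracket can be rewritten as a $k$-linear combination of Hall commutators of the same weight using only anticommutativity and the Jacobi identity (the collection process). Linear independence is then obtained inside $k\langle X\rangle$, the universal enveloping algebra of $L(X)$: a Hall collection argument carried out in the associative algebra shows that the non-decreasing products of Hall commutators of total degree $n$ span $k\langle X\rangle_n$, and a leading-monomial argument shows these products are linearly independent; hence they form a $k$-basis of $k\langle X\rangle$ (which, incidentally, yields Witt's formula $\dim_k L(X)_n=\#C_n=\tfrac1n\sum_{e\mid n}\mu(n/e)d^e$). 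Since the single Hall commutators of weight $n$ occur among these products, $C_n$ is linearly independent, hence --- being also a spanning set --- a $k$-basis of $L(X)_n$.

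For part (ii) the plan is to reduce everything to a comparison of Poincaré series via the free Lie algebra. I would first establish the decomposition of graded $k$-vector spaces
\[
 L_{res}(X)=\bigoplus_{j\ge 0} L(X)^{(p^j)},\qquad L(X)^{(p^j)}:=\operatorname{span}_k\{\,x^{[p^j]}\mid x\in L(X)\,\},
\]
where $L(X)^{(p^j)}$ sits in degrees which are $p^j$ times those of $L(X)$. That the right-hand side is a restricted Lie subalgebra of $k\langle X\rangle$ containing $X$ uses the identities $\ad(x^{[p]})=(\ad x)^p$ and $[a,b^{[p]}]=-(\ad b)^p(a)$, which keep brackets of $p$-power elements inside $L(X)$; directness of the sum can be read off the $\N$-grading or deduced from the Poincaré--Birkhoff--Witt basis of $k\langle X\rangle$ associated to $L(X)$. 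Substituting the Hall basis from part (i) and using that $x\mapsto x^{[p^j]}$ is additive modulo $\bigoplus_{l<j}L(X)^{(p^l)}$, one obtains $\bigl(L(X)^{(p^j)}\bigr)_n=\operatorname{span}_k\{\,c^{[p^j]}\mid c\in C_{n/p^j}\,\}$; hence $\ol{C}_n$ spans $L_{res}(X)_n$, and note also that $\#\ol{C}_n\le a_n:=\sum_{ip^j=n}\dim_k L(X)_i$ since $(C_i)^{p^j}$ has at most $\#C_i$ elements.

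It then remains to show $\dim_k L_{res}(X)_n=a_n$, for then the spanning set $\ol{C}_n$, having at most $a_n$ elements, must be a $k$-basis of $L_{res}(X)_n$ (and $c\mapsto c^{[p^j]}$ must be injective on each $C_i$). Here I would invoke the restricted Poincaré--Birkhoff--Witt theorem (\cite{NJ}, Ch.\,V), using that $k\langle X\rangle$ is also the restricted universal enveloping algebra of $L_{res}(X)$: applied to a homogeneous $k$-basis of $L_{res}(X)$ it gives
\[
 \frac{1}{1-dt}=k\langle X\rangle(t)=\prod_{n\ge1}\Bigl(\frac{1-t^{pn}}{1-t^n}\Bigr)^{\dim_k L_{res}(X)_n}.
\]
On the other hand, the elementary relation $a_n=\dim_k L(X)_n+a_{n/p}$ (with $a_{n/p}:=0$ if $p\nmid n$) collapses the formal product $\prod_{n\ge1}\bigl((1-t^{pn})/(1-t^n)\bigr)^{a_n}$ to $\prod_{n\ge1}(1-t^n)^{-\dim_k L(X)_n}$, which equals $1/(1-dt)$ by ordinary PBW for $L(X)$ (Witt's formula again). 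Comparing the two product expansions coefficient by coefficient --- the $n$-th factor of either product first contributes in degree $n$ --- yields $\dim_k L_{res}(X)_n=a_n$ for all $n$, which completes the argument. I expect the main obstacle to be the structural decomposition $L_{res}(X)=\bigoplus_{j}L(X)^{(p^j)}$ together with its compatibility with the Hall basis; granting that, the remainder is just bookkeeping with Poincaré series.
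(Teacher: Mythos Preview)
The paper does not prove this lemma: it is introduced with the phrase ``The following lemma is well-known'' and stated without argument, so there is no proof to compare against. Your plan is correct. Part (i) is precisely the classical Hall basis theorem as in \cite{NB}, Ch.~II. For part (ii) the Poincar\'e-series comparison via restricted PBW is sound: the telescoping relation $a_n-a_{n/p}=\dim_k L(X)_n$ indeed collapses $\prod_{n\ge 1}\bigl((1-t^{pn})/(1-t^n)\bigr)^{a_n}$ to $\prod_{n\ge 1}(1-t^n)^{-\dim_k L(X)_n}$, and the exponents can be recovered inductively since each factor $(1-t^{pn})/(1-t^n)=1+t^n+\cdots$ first contributes in degree $n$. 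One small simplification you might consider: the intermediate vector-space decomposition $L_{res}(X)=\bigoplus_{j\ge 0} L(X)^{(p^j)}$ is not strictly needed. It suffices to observe directly that the elements $c^{p^j}$ for $c\in\mathcal{C}$, $j\ge 0$, lie in $L_{res}(X)$ and are linearly independent in $k\langle X\rangle$ --- they are pairwise distinct PBW monomials with respect to the ordinary Hall basis of $L(X)$ --- and then your dimension count $\dim_k L_{res}(X)_n=a_n$ forces $\overline{C}_n$ to be a basis, bypassing the spanning argument entirely.
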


Let $\tau=(\tau_1,\ldots, \tau_d)$ be a sequence of integers $\tau_i>0$ and $\kpolyx = \bigoplus_{n\ge 0} \kpolyx^\tau_n$ be the $(X,\tau)$-grading given by $\deg_\tau X_i = \tau_i$. These gradings induce gradings $L(X)=\bigoplus_{n\ge 0} L(X)^\tau_n,\ L_{res}(X)=\bigoplus_{n\ge 0} L_{res}(X)^\tau_n$ making $L(X)$ and $L_{res}(X)$ into a graded $k$-Lie algebra and a graded restricted $k$-Lie algebra respectively, i.e.\ $[L(X)_i^\tau,L(X)_j^\tau]\subseteq L(X)_{i+j}^\tau$ and $[L_{res}(X)_i^\tau,L_{res}(X)_j^\tau]\subseteq L_{res}(X)_{i+j}^\tau$,\linebreak $(L_{res}(X)_i^\tau)^p \subseteq L_{res}(X)_{pi}^\tau$.

Note that since for any $\tau$ the set $\ol{\mc{C}}$ consists of $(X,\tau)$-homogeneous polynomials, we obtain the following

\begin{coro}
 \label{hallbasiscoro}
The set
 \[
\ol{C}^\tau_n=\{c\in\ol{\mc{C}}\ |\ c\in L_{res}(X)_n^\tau\}\subset \ol{\mc{C}}
\]
is a $k$-basis for $L_{res}(X)_n^\tau$ for all $n\in\N$.
\end{coro}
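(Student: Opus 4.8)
The plan is to derive the corollary directly from Lemma \ref{hallbasis}(ii) together with the observation, already noted just before the statement, that every element of $\ol{\mc{C}}$ is $(X,\tau)$-homogeneous for an arbitrary weight vector $\tau=(\tau_1,\ldots,\tau_d)$. By Lemma \ref{hallbasis}(ii) the set $\ol{\mc{C}}$ is a $k$-vector space basis of $L_{res}(X)$; this is a statement about the underlying $k$-vector space only and is insensitive to any grading. Hence it suffices to know that each $c\in\ol{\mc{C}}$ is homogeneous for the $(X,\tau)$-grading that $L_{res}(X)$ inherits from $\kpolyx$, and then to apply the elementary fact that a basis of a graded $k$-vector space consisting of homogeneous elements restricts, in each degree $n$, to a basis of the $n$-th graded component. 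Indeed, if $V=\bigoplus_n V_n$ and $\mc{B}\subseteq V$ is a basis of homogeneous elements, then for $v\in V_n$ one expands $v$ in $\mc{B}$ and compares degrees, forcing $v$ to be a combination of the elements of $\mc{B}$ lying in $V_n$; linear independence is inherited from $\mc{B}$. Applied to $V=L_{res}(X)$ with its $(X,\tau)$-grading and to $\mc{B}=\ol{\mc{C}}$, the set of basis elements in degree $n$ is by definition precisely $\ol{C}^\tau_n$, which gives the claim.

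It remains to justify the homogeneity of the elements of $\ol{\mc{C}}=\bigcup_{i\ge 1,\,j\ge 0}(C_i)^{p^j}$. An element $c\in C_i$ is an iterated Lie bracket on a length-$i$ sequence $X_{j_1},\ldots,X_{j_i}$ of generators; expanding all brackets inside $\kpolyx$ via $[a,b]=ab-ba$ writes $c$ as a signed sum of associative monomials, each of which is a rearrangement of $X_{j_1}\cdots X_{j_i}$ and therefore has $(X,\tau)$-degree $\tau_{j_1}+\cdots+\tau_{j_i}$. Thus $c$ is $(X,\tau)$-homogeneous of degree $\tau_{j_1}+\cdots+\tau_{j_i}$. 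Since the $p$-th power operation on $L_{res}(X)\subseteq\kpolyx$ is literally the $p$-th power in the enveloping algebra, which multiplies $(X,\tau)$-degrees by $p$, every element of $(C_i)^{p^j}$ is $(X,\tau)$-homogeneous (of degree $p^{j}(\tau_{j_1}+\cdots+\tau_{j_i})$). Hence all elements of $\ol{\mc{C}}$ are homogeneous, as needed.

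There is no substantive obstacle in this argument; the proof is essentially a bookkeeping remark. The only point that requires any attention is the homogeneity statement, and even that is immediate once one recalls that a Lie monomial in $X_{j_1},\ldots,X_{j_i}$ expands into associative monomials using exactly the multiset $\{X_{j_1},\ldots,X_{j_i}\}$, so that its $(X,\tau)$-degree is well defined.
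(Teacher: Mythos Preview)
Your proof is correct and follows exactly the approach indicated in the paper: the corollary is stated immediately after the observation that every element of $\ol{\mc{C}}$ is $(X,\tau)$-homogeneous, and the paper treats it as an immediate consequence of this fact together with Lemma~\ref{hallbasis}(ii), which is precisely what you spell out.
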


Now let $F$ be the free pro-$p$-group on $x=\{x_1,\ldots, x_d\}$ endowed with the $(x,\tau)$-grading. Recall that the map $F\hookrightarrow \Omega_F,\ f\mapsto f-1$ and the identification $\Omega_F\cong \power$ induce the embedding of graded restricted Lie algebras
\[
\begin{tikzpicture}[description/.style={fill=white,inner sep=2pt}, bij/.style={above,sloped,inner sep=1.5pt}]
\matrix (m) [matrix of math nodes, row sep=3em,
column sep=2.5em, text height=1.5ex, text depth=0.25ex]
{ \phi_F: \gr^\tau F & \polyx^\tau,\\};
\path[right hook->,font=\scriptsize]
(m-1-1) edge node[auto] {} (m-1-2);
\end{tikzpicture}
\]
mapping the initial form of $x_i$ to $X_i$.

In \cite{ML}, App.3, Th.3.5 it is shown that for $\tau=(1,\ldots, 1)$ the restricted $F_p$-Lie algebra $\gr F$ is generated by the initial forms of $x_1,\ldots, x_d$. Note that Lazard uses a different definition for the Zassenhaus filtration, however it is not hard to show that it coincides with the definition given in section 2. Furthermore, applying similar arguments to the ones in \cite{ML}, it follows more generally that $\gr^\tau F$ is generated by the initial forms of $x_1,\ldots, x_d$ as restricted $\F_p$-Lie algebra for any $\tau=(\tau_1,\ldots, \tau_d)$. This implies the following

\begin{prop}
\label{grftheo}
 Under the embedding $\phi_F$, the graded restricted Lie algebra $\gr^\tau F$ identifies with the free restricted Lie algebra $L_{res}(X)$ endowed with the $(X,\tau)$-grading.
\end{prop}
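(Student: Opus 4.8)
The plan is to deduce the proposition from three ingredients: the injectivity of $\phi_F$ (which is given, as $\phi_F$ is by construction an embedding of graded restricted Lie algebras), the fact recalled just before the statement that $\gr^\tau F$ is generated as a restricted $\F_p$-Lie algebra by the initial forms of $x_1,\ldots,x_d$, and the standard fact (set up at the beginning of this section) that the restricted Lie subalgebra of $\polyx^\tau$ generated by $X_1,\ldots,X_d$ is the free restricted $\F_p$-Lie algebra on $X$. Assembling these is purely formal once the second one is granted.

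First I would observe that $\phi_F\colon \gr^\tau F\hookrightarrow\polyx^\tau$ is a homomorphism of graded restricted Lie algebras sending the initial form of $x_i$ to $X_i$; here the initial form of $x_i$ lies in $F_{(\tau,\tau_i)}/F_{(\tau,\tau_i+1)}$ because the Magnus expansion of $x_i$ is $1+X_i$ and $\nu_\tau(X_i)=\tau_i$, so the target degree $\tau_i$ matches. Consequently the image of $\phi_F$ contains each $X_i$ and hence contains the restricted Lie subalgebra $L_{res}(X)\subseteq\polyx^\tau$ generated by the $X_i$. Conversely, since $\gr^\tau F$ is generated as a restricted Lie algebra by the initial forms of the $x_i$ and $\phi_F$ is a restricted Lie algebra homomorphism, the image of $\phi_F$ is generated by the $X_i$, hence is contained in $L_{res}(X)$. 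Therefore $\phi_F$ has image exactly $L_{res}(X)$, and being injective it induces an isomorphism $\gr^\tau F\cong L_{res}(X)$ of restricted $\F_p$-Lie algebras.

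It then remains only to record compatibility with the gradings, i.e.\ to identify $L_{res}(X)$ with the grading it inherits from $\polyx^\tau$ with the free restricted $\F_p$-Lie algebra on $X$ carrying the $(X,\tau)$-grading $\deg X_i=\tau_i$. But $\polyx^\tau$ is precisely the universal enveloping algebra of $L_{res}(X)$, which is free on $X$ by Jacobson's theory (\cite{NJ}, Ch.V), and the $(X,\tau)$-grading on the abstract free restricted Lie algebra on $X$ is by definition the one induced from $\polyx^\tau$; concretely the $(X,\tau)$-homogeneous Hall basis $\overline{\mc C}$ of Corollary~\ref{hallbasiscoro} makes the identification of homogeneous components transparent. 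Since $\phi_F$ preserves degrees, each homogeneous component $(\gr^\tau F)_n$ maps isomorphically onto $L_{res}(X)_n^\tau$, which finishes the argument.

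The only genuinely nontrivial input is the generation statement, namely that $\gr^\tau F$ is generated by the initial forms of $x_1,\ldots,x_d$ as a restricted Lie algebra. For the unweighted Zassenhaus filtration ($\tau=(1,\ldots,1)$) this is Lazard's theorem (\cite{ML}, App.3, Th.3.5), and in the weighted case it follows by the same argument, as indicated in the text preceding the statement. Everything else in the proof is formal, so I would present the proof essentially as the short deduction above, citing the generation result rather than reproving it.
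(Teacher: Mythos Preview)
Your proposal is correct and follows essentially the same route as the paper: the paper does not give a separate proof but presents the proposition as an immediate consequence (``This implies the following'') of the generation statement for $\gr^\tau F$ (Lazard, extended to arbitrary $\tau$) together with the already-established facts that $\phi_F$ is an embedding sending the initial form of $x_i$ to $X_i$ and that $L_{res}(X)$ is the restricted Lie subalgebra of $\polyx^\tau$ generated by the $X_i$. You have simply spelled out this deduction in full, including the grading compatibility, which the paper leaves implicit.
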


\begin{coro}
\label{grfbasiscoro}
 Under the identification $\gr^\tau F\cong L_{res}(X)$, the set $\ol{C}^\tau_n$ is a $\F_p$-basis for $F_{(\tau,n)}/F_{(\tau,n+1)}$ for all $n\in\N$.
\end{coro}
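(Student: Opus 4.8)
The plan is to deduce the statement directly from Proposition \ref{grftheo} and Corollary \ref{hallbasiscoro}, so that essentially no new argument is needed beyond bookkeeping with the gradings. First I would recall that, by construction, $\phi_F\colon\gr^\tau F\hookrightarrow\polyx^\tau$ is an embedding of graded restricted Lie algebras sending the initial form of $x_i$ to $X_i$; hence in each degree $n$ it restricts to an injective $\F_p$-linear map
\[
\phi_F\colon F_{(\tau,n)}/F_{(\tau,n+1)}=(\gr^\tau F)_n\hookrightarrow\polyx^\tau_n .
\]
By Proposition \ref{grftheo} the image of $\phi_F$ is precisely the free restricted Lie subalgebra $L_{res}(X)\subseteq\polyx^\tau$, equipped with the $(X,\tau)$-grading; in particular, in each degree $n$, $\phi_F$ induces an $\F_p$-linear isomorphism $F_{(\tau,n)}/F_{(\tau,n+1)}\cong L_{res}(X)^\tau_n$.

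Next I would invoke Corollary \ref{hallbasiscoro}, which (with $k=\F_p$) asserts that $\ol{C}^\tau_n$ is an $\F_p$-basis of $L_{res}(X)^\tau_n$. Since an $\F_p$-linear isomorphism carries a basis to a basis, pulling $\ol{C}^\tau_n$ back along $\phi_F^{-1}$ produces an $\F_p$-basis of $F_{(\tau,n)}/F_{(\tau,n+1)}$; under the identification $\gr^\tau F\cong L_{res}(X)$ of Proposition \ref{grftheo} this is exactly the claim that $\ol{C}^\tau_n$ itself is such a basis.

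The only point requiring a moment's attention --- the ``main obstacle'', such as it is --- is to make sure that the $(X,\tau)$-grading on $L_{res}(X)$ appearing in Corollary \ref{hallbasiscoro} is the same one that occurs as the target grading in Proposition \ref{grftheo}. This holds by definition: in both places the grading is the one induced from $\deg_\tau X_i=\tau_i$ on the free associative algebra $\kpolyx$ with $k=\F_p$, and the Hall commutators in $\ol{\mc{C}}$ are homogeneous for it. Hence the corollary follows at once, with no genuine computation.
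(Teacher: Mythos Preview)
Your proposal is correct and matches the paper's intent: the corollary is stated without proof in the paper, as it follows immediately from combining Proposition~\ref{grftheo} with Corollary~\ref{hallbasiscoro} exactly as you describe. Your remark about the compatibility of the $(X,\tau)$-gradings is the only point worth noting, and you handle it correctly.
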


We return to the study of one-relator pro-$p$-groups. In \cite{JLOneRel}, J.\ Labute showed that if the generating relation of a one-relator pro-$p$-group is 'not too close' to being a $p$-th power, then the group is mild. One key ingredient in his proof is the following theorem, cf. \cite{JLOneRel}, Th.1.

\begin{theo}
\label{Labuteonerelatorlie}
Let $k$ be an arbitrary field and $L(X)$ be the free Lie algebra over $k$ on the set $X=\{X_1,\ldots, X_d\}$ endowed with the $(X,\tau)$-filtration for some $\tau=(\tau_1,\ldots, \tau_d)$. Let $\rho\in L(X)$ be a homogeneous element of degree $n$. Let $\mf{g}=L(X)/(\rho)$ be the quotient of $L(X)$ by the Lie ideal generated by $\rho$. Then the Poincaré series of the enveloping algebra $U_\mf{g}$ of $\mf{g}$ satisfies
\[
U_\mf{g}(t) = \frac{1}{1-(t^{\tau_1}+\ldots + t^{\tau_d}) +t^n}.
\]
\end{theo}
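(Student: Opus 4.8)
The plan is to compare the single-relator Lie quotient with the single-relator \emph{associative} quotient and show that the relation $\rho$, viewed inside $\kpolyx = U_{L(X)}$, generates a strongly free sequence of length one. First I would recall that $\kpolyx$ is the universal enveloping algebra of $L(X)$, so by Proposition \ref{universalquot} (the ordinary-Lie-algebra version cited there) the two-sided ideal $\mc{R}$ of $A = \kpolyx$ generated by $\rho$ satisfies $U_{\mf{g}} = U_{L(X)/(\rho)} = A/\mc{R}$, where $(\rho)$ on the left denotes the Lie ideal generated by $\rho$. Thus it suffices to prove that the Poincaré series of $A/\mc{R}$ equals $\bigl(1 - (t^{\tau_1}+\dots+t^{\tau_d}) + t^n\bigr)^{-1}$, i.e.\ that the one-element sequence $\{\rho\}$ is strongly free in the sense of Definition \ref{stronglyfreedefi}. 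By Lemma \ref{poincareestimate} we already have the inequality $(A/\mc{R})(t) \ge \bigl(1 - \sum_i t^{\tau_i} + t^n\bigr)^{-1}$, so only the reverse inequality, equivalently equality, has to be established.

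For the reverse inequality I would exhibit a suitable ordering and apply Anick's criterion, Theorem \ref{anickscrit}: it is enough to find a multiplicative order $<$ on the monomials $\mf{M}$ of $A$ such that the high term $\tilde\rho$ of $\rho$ is a single monomial forming a combinatorially free sequence of length one. Here the key structural input is that $\rho$ is a nonzero homogeneous \emph{Lie} element; write $\rho = \sum_{c} \lambda_c\, c$ as a linear combination of Hall basis elements $c \in C_n$ of weight $n$ (Lemma \ref{hallbasis}, with the $\tau$-grading via Corollary \ref{hallbasiscoro}). Each Hall commutator, expanded in $A = \kpolyx$, has a distinguished leading monomial (for an appropriate multiplicative order the leading monomial of a Hall commutator of weight $n$ is a Lyndon-type word), and these leading monomials are pairwise distinct as $c$ ranges over $C_n$; hence $\rho$ itself has a well-defined high term $\tilde\rho = X_{i_1}\cdots X_{i_n}$, a single monomial of degree $n$. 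The combinatorial freeness of the length-one sequence $\{\tilde\rho\}$ amounts to Definition \ref{combinatoriallyfreedefi}: condition (i) is vacuous, and condition (ii) requires that no proper prefix of $\tilde\rho$ equals a proper suffix of $\tilde\rho$ — i.e.\ that $\tilde\rho$ is not ``periodic''. This is exactly the reason for choosing the leading monomial to be a Lyndon word: a Lyndon word is strictly smaller than all its proper cyclic rotations and in particular admits no nontrivial prefix--suffix coincidence, so $\{\tilde\rho\}$ is combinatorially free. Anick's criterion then yields that $\{\rho\}$ is strongly free, giving $(A/\mc{R})(t) = \bigl(1 - \sum_i t^{\tau_i} + t^n\bigr)^{-1}$, and via $U_\mf{g} = A/\mc{R}$ this is the assertion.

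The step I expect to be the main obstacle is the combinatorial Lie-theoretic input: choosing a multiplicative order (a variant of the $<_U$-type or purely lexicographic orders of Section 3, adapted so that it is compatible with the standard bracketing of Hall commutators) for which one can \emph{prove} that the high term of an arbitrary nonzero homogeneous element of $L(X)$ is a single Lyndon word, and that distinct Hall basis elements have distinct leading words so no cancellation destroys the high term. Once the leading word is known to be Lyndon, the prefix/suffix (non-periodicity) argument closing condition (ii) of combinatorial freeness is short. An alternative to the Lyndon-word route, which I would mention as a fallback, is to invoke Labute's original argument in \cite{JLOneRel}, Th.1, directly, but the Anick-criterion approach is cleaner and fits the framework already developed in Sections 2 and 3; note also that the result is for ordinary (not restricted) Lie algebras over an arbitrary field $k$, so no $p$-power subtleties enter and $\kpolyx$ may be used throughout as the enveloping algebra.
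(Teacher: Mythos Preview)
The paper does not prove this statement itself; it is quoted from \cite{JLOneRel}, Th.~1 and used as a black box. Labute's original argument predates Anick's work and proceeds via the Lazard elimination theorem for free Lie algebras: after a change of basis one arranges that some variable, say $X_d$, occurs in $\rho$; elimination then writes $L(X)$ as a semidirect product of $L(X_1,\dots,X_{d-1})$ with a free Lie ideal on an explicit (infinite) set of homogeneous generators, among which the image of $\rho$ can be taken as one of the free generators, and the Poincar\'e-series identity drops out of the resulting product formula.

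Your Anick-criterion route is genuinely different and does go through, but two points in your sketch need sharpening. First, use the Lyndon--Shirshov basis rather than a general Hall set: the fact you need---that the bracketing $[w]$ has $w$ itself as its extreme monomial with coefficient $1$, and that distinct basis elements have distinct such monomials---is particular to that basis. Second, the classical statement says the \emph{lex-minimal} monomial of $[w]$ is $w$, whereas Theorem~\ref{anickscrit} is phrased via the \emph{high} term; so take the multiplicative order ``length first, then reverse lexicographic'' (this satisfies Definition~\ref{multorder}), under which the high term of any nonzero multihomogeneous Lie element is a Lyndon word. For a merely $\tau$-homogeneous $\rho$ one passes to the multihomogeneous component carrying the high term, noting that monomials of different multidegrees cannot cancel. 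Your unborderedness step is then correct and short: if $u$ were a nonempty proper border of a Lyndon word $w$, then as a proper suffix $w<u$ while as a proper prefix $u<w$, a contradiction. What your approach buys over Labute's is that it stays entirely inside the associative framework of Sections~2--3 and avoids the elimination-theorem bookkeeping; Labute's argument, on the other hand, yields finer structural information about the Lie ideal $(\rho)$ itself.
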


\begin{coro}
\label{onerelatorliecoro}
 Let $X=\{X_1,\ldots, X_d\}$ and $\rho\in L(X)\subset \polyx$ be a non-zero homogeneous element with respect to the $(X,\tau)$-filtration for some $\tau=(\tau_1,\ldots, \tau_d)$ which lies in the free Lie subalgebra $L(X)\subset\polyx$. Then the sequence $\{\rho\}$ is strongly free.
\end{coro}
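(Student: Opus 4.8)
The plan is to deduce the corollary directly from Theorem~\ref{Labuteonerelatorlie} by identifying the associative quotient with a universal enveloping algebra. Write $n=\deg^\tau\rho$; this is a well-defined finite degree since $\rho\neq 0$ is $(X,\tau)$-homogeneous, and $n\ge 1$ since $\rho$ lies in $L(X)\subseteq I_A$ where $A=\polyx^\tau$. Let $\mc{R}=(\rho)$ be the two-sided ideal of $A$ generated by $\rho$ and put $B=A/\mc{R}$. By Definition~\ref{stronglyfreedefi}, applied with $m=1$ and $\sigma_1=n$, the one-term sequence $\{\rho\}$ is strongly free precisely when $B(t)=\frac{1}{1-(t^{\tau_1}+\cdots+t^{\tau_d})+t^n}$. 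So it suffices to compute $B(t)$, and the heart of the argument is the claim that $B$ is the universal enveloping algebra of the Lie-algebra quotient $\mf{g}=L(X)/\mf{r}$, where $\mf{r}\subseteq L(X)$ is the Lie ideal generated by $\rho$; once this is in place, Theorem~\ref{Labuteonerelatorlie} applied to the homogeneous element $\rho\in L(X)$ of degree $n$ delivers exactly the Poincaré series above.

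To obtain the identification $B\cong U_{\mf{g}}$, I would invoke the standard fact on enveloping algebras of Lie-algebra quotients (\cite{NB}, Ch.\ I, \S2.3, Prop.\ 3, the non-restricted analog of Proposition~\ref{universalquot}): recalling that $A=\polyx^\tau$ is the universal enveloping algebra of the free Lie algebra $L(X)$, the canonical surjection $L(X)\surj\mf{g}$ induces an isomorphism $A/\mf{R}\xrightarrow{\sim}U_{\mf{g}}$ of graded $\F_p$-algebras, where $\mf{R}$ is the two-sided ideal of $A$ generated by the image of $\mf{r}$. The only step that needs a genuine (if short) argument — and the only place one has to be careful — is the equality $\mf{R}=\mc{R}$, i.e.\ that forming the Lie ideal generated by $\rho$ before passing to the two-sided ideal changes nothing. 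The inclusion $\mc{R}\subseteq\mf{R}$ is immediate from $\rho\in\mf{r}$. For the reverse one uses that $\mf{r}$ is spanned over $\F_p$ by $\rho$ together with its iterated left-normed brackets against elements of $L(X)$, and that each bracket in $A$ is the commutator $ab-ba$; an easy induction shows every such element already lies in the two-sided ideal $(\rho)$, so $\mf{r}\subseteq\mc{R}$ and hence $\mf{R}\subseteq\mc{R}$.

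Putting the pieces together: $B=A/\mc{R}=A/\mf{R}\cong U_{\mf{g}}$ as graded $\F_p$-algebras, so $B(t)=U_{\mf{g}}(t)=\frac{1}{1-(t^{\tau_1}+\cdots+t^{\tau_d})+t^n}$ by Theorem~\ref{Labuteonerelatorlie}, which by Definition~\ref{stronglyfreedefi} is exactly the assertion that $\{\rho\}$ is strongly free. I do not expect any serious obstacle here: all the real content sits in Labute's Theorem~\ref{Labuteonerelatorlie}, and what remains is the by-now-standard dictionary between a one-relator presentation of a Lie algebra and the induced presentation of its enveloping algebra, together with the small bookkeeping lemma $\mf{R}=\mc{R}$.
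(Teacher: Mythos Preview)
Your proposal is correct and follows essentially the same approach as the paper: identify $A/\mc{R}$ with $U_{\mf{g}}$ via \cite{NB}, Ch.\,I, \S2.3, Prop.\,3 and then read off the Poincar\'e series from Theorem~\ref{Labuteonerelatorlie}. Your explicit verification that the two-sided ideal generated by the Lie ideal $\mf{r}$ coincides with the two-sided ideal generated by $\rho$ is a detail the paper leaves implicit, but it is not a different idea.
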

\begin{proof}
 Let $\mc{R}$ denote the two-sided ideal of $\polyx$ generated by $\rho$. Since $\polyx$ is the universal enveloping algebra of the $\F_p$-Lie algebra $L(X)$, by \cite{NB}, Ch.I, §2.3, Prop.3 there is a canonical isomorphism
\[
 \polyx/\mc{R}\cong U_\mf{g}
\]
where as above $U_\mf{g}$ denotes the enveloping algebra of $L(X)/(\rho)$. Hence the claim follows immediately from \ref{Labuteonerelatorlie}.
\end{proof}

\begin{theo} Let $F$ be the free pro-$p$-group on $x_1,\ldots, x_d$.
\label{onerelatorgrouptheo}
\begin{itemize}
 \item[\rm (i)] Let $1\neq r\in F_{(2)}$ such that for some $\tau=(\tau_1,\ldots, \tau_d)$ the initial form $\rho$ of $r$ in $\gr^\tau F$ is a Lie polynomial, i.e.\ $\rho \in L(X)\subset L_{res}(X)\cong \gr^\tau F,$ \linebreak $X=\{X_1,\ldots, X_d\}$. Then $G=F/(r)$ is mild with respect to the Zassenhaus $(x,\tau)$-filtration. 
 \item[\rm (ii)] Let $r\in F_{(2)}$ such that for some $\tau=(\tau_1,\ldots, \tau_d)$ and some $n\in\N$ with $(n,p)=1$ it holds that $r\in F_{(\tau,n)}\setminus F_{(\tau,n+1)}$. Then $G=F/(r)$ is mild with respect to the Zassenhaus $(x,\tau)$-filtration. 
\end{itemize}
\begin{proof}
 The first statement is a direct consequence of \ref{onerelatorliecoro}. Hence it remains to show (ii): Assume that $r\in F_{(\tau,n)}\setminus F_{(\tau,n+1)}$. Then for the initial form $\rho$ of $r$ in $\gr^\tau F$ we have $\rho\in \F_{(\tau, n)}/F_{(\tau,n+1)}\cong L_{res}(X)^\tau_n$. By \ref{hallbasiscoro}, a $\F_p$-basis of $L_{res}(X)^\tau_n$ is given by the set $\ol{C}^\tau_n\subset \ol{\mc{C}}$. However, since $n$ is prime to $p$, it follows that $\ol{C}^\tau_n\subset \mc{C}$ consists of Hall commutators only, in particular $\rho$ is a Lie polynomial. Now the claim follows from (i).
\end{proof}
\end{theo}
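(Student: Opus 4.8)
The plan is to derive both parts from Corollary \ref{onerelatorliecoro}, which already packages the substantive input (ultimately Labute's Poincaré-series computation, Theorem \ref{Labuteonerelatorlie}): a single nonzero homogeneous \emph{Lie} element of $\polyx^\tau$ automatically forms a strongly free sequence.

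For part (i) I would first do the bookkeeping needed to apply Definition \ref{milddefi}. Since $r\in F_{(2)}$ we have $h^1(G)=d$, and since $\nu_\tau\ge\nu_{(1,\ldots,1)}$ on $\power$ we also have $F_{(n)}\subseteq F_{(\tau,n)}$ for all $n$; in particular $(r)\subseteq F_{(2)}\subseteq F_{(\tau,2)}$, so $G=\langle x_1,\ldots,x_d\mid r\rangle$ is a minimal presentation with respect to the $(x,\tau)$-filtration. Because $r\neq 1$ and $\bigcap_n F_{(\tau,n)}=1$, the initial form $\rho$ of $r$ is a well-defined nonzero homogeneous element of $\gr^\tau F\subset\polyx^\tau$, which lies in $L(X)$ by hypothesis. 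Corollary \ref{onerelatorliecoro} then yields that $\{\rho\}$ is strongly free, so the presentation is strongly free and $G$ is mild with respect to the Zassenhaus $(x,\tau)$-filtration.

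For part (ii) the idea is to show that the coprimality hypothesis forces the initial form of $r$ to be a Lie polynomial, reducing to part (i). The initial form $\rho$ of $r$ is a nonzero homogeneous element of $F_{(\tau,n)}/F_{(\tau,n+1)}=(\gr^\tau F)_n$, identified by Proposition \ref{grftheo} with $L_{res}(X)^\tau_n$, which by Corollary \ref{hallbasiscoro} has $\F_p$-basis $\ol{C}^\tau_n\subset\ol{\mc{C}}$. Every element of $\ol{\mc{C}}$ has the form $h^{p^j}$ for a Hall commutator $h$ and some $j\ge 0$, with $\tau$-degree $p^j\deg^\tau(h)$; for such an element to lie in degree $n$ we need $p^j\mid n$, so $(n,p)=1$ forces $j=0$. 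Hence $\ol{C}^\tau_n\subset\mc{C}$ consists of genuine Hall commutators, so $L_{res}(X)^\tau_n\subseteq L(X)$ and $\rho$ is a Lie polynomial; part (i) now finishes the argument.

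Given the results already assembled, I do not expect any serious obstacle. The one step that deserves care is the reduction in part (ii) of the degree-$n$ piece of the free restricted Lie algebra to that of the free Lie algebra when $p\nmid n$, which rests on Proposition \ref{grftheo} and the explicit Hall-type basis of Corollary \ref{hallbasiscoro}; everything genuinely difficult about one-relator Lie algebras has already been absorbed into Corollary \ref{onerelatorliecoro}.
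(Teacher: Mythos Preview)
Your proof is correct and follows essentially the same route as the paper: part (i) is a direct application of Corollary~\ref{onerelatorliecoro}, and part (ii) reduces to (i) by invoking the Hall-type basis of Corollary~\ref{hallbasiscoro} (via Proposition~\ref{grftheo}) to see that $(n,p)=1$ forces $\ol{C}^\tau_n\subset\mc{C}$, so the initial form is a Lie polynomial. Your write-up simply makes explicit some bookkeeping (the inclusion $F_{(2)}\subseteq F_{(\tau,2)}$ and the divisibility argument $p^j\mid n$) that the paper leaves implicit.
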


As a special case of (ii), we obtain the following

\begin{coro}
\label{onerelatorzassenhaus}
 Let $G$ be a one-relator pro-$p$-group such that the Zassenhaus invariant $\z(G)$ is prime to $p$. Then $G$ is mild with respect to the Zassenhaus filtration.
\end{coro}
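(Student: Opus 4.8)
The plan is to obtain this corollary as the special case $\tau=(1,\ldots,1)$ of Theorem~\ref{onerelatorgrouptheo}(ii), for which the Zassenhaus $(x,\tau)$-filtration is precisely the usual Zassenhaus filtration of $F$. So the only real content is to verify that the single defining relation $r$ lies in $F_{(n)}\setminus F_{(n+1)}$ with $n=\z(G)$, after which the result is an immediate appeal to the theorem.

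First I would fix a minimal presentation $1\to R\to F\to G\to 1$, where $F$ is the free pro-$p$-group on $x_1,\ldots,x_d$ with $d=h^1(G)$. Since $h^2(G)=1$, the closed normal subgroup $R$ is generated by a single element, so we may write $G=F/(r)$ with $r\in F_{(2)}$ and $r\neq 1$; the latter because $h^2(G)\neq 0$ forces $G$ to be non-free. For the same reason $n:=\z(G)$ is a finite integer $\ge 2$ (recall $\z(G)=\infty$ exactly for free $G$).

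Next I would unwind Definition~\ref{zassinvdefi}(i): $n=\z(G)$ means $R\subseteq F_{(n)}$ and $n$ is maximal with this property, i.e.\ $R\not\subseteq F_{(n+1)}$. Since $r\in R$ we get $r\in F_{(n)}$; and if we had $r\in F_{(n+1)}$, then, $F_{(n+1)}$ being a closed normal subgroup of $F$, the closed normal closure $R=(r)$ would be contained in $F_{(n+1)}$, a contradiction. Hence $r\in F_{(n)}\setminus F_{(n+1)}$.

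Finally, since $(n,p)=1$ by hypothesis, Theorem~\ref{onerelatorgrouptheo}(ii) applied with $\tau=(1,\ldots,1)$ and this $n$ yields that $G=F/(r)$ is mild with respect to the Zassenhaus $(x,\tau)$-filtration, which is the usual Zassenhaus filtration since all $\tau_i=1$; by Theorem~\ref{mainmildtheo}(i) this also gives $\cd G=2$. The only point requiring a moment's care is the implication $r\in F_{(n+1)}\Rightarrow R\subseteq F_{(n+1)}$, which rests on the normality of the Zassenhaus filtration steps; everything else is a direct invocation of the results already established.
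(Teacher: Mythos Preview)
Your proof is correct and follows exactly the paper's approach: the paper simply states that the corollary is obtained ``as a special case of (ii)'' of Theorem~\ref{onerelatorgrouptheo}, and your argument spells out precisely this specialization to $\tau=(1,\ldots,1)$, including the verification that $r\in F_{(n)}\setminus F_{(n+1)}$ with $n=\z(G)$.
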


The above corollary can be considered as a more general version of \cite{JLOneRel}, Th.4. In our proof it was essential to combine Labute's result \ref{Labuteonerelatorlie} on quotients of free Lie algebras by a single relation with the fact that we have a complete description of the (graded) structure of $\gr F$ in terms of a free restricted Lie algebra. As an immediate consequence, we obtain the following: If $G$ is a one-relator pro-$p$-group with $\cd G>2$, then $p\mid \z(G)$. This fact is related to an open question which has been originally asked by Serre in \cite{JSQuestion}. The following slightly corrected version is due to Gildenhuys, cf.\ \cite{DG}:
\vspace{10pt}

\it ``Let $G$ be a one-relator pro-$p$-group satisfying $\cd G > 2$. Does $G$ admit a presentation of the form $G=F/(u^p)$, i.e.\ is $G$ the quotient of a free pro-$p$-group by a $p$-th power?''\rm
\vspace{10pt}

A positive answer would imply that for one-relator pro-$p$-groups $G$ we have $\cd G\in \{2,\infty\}$. This question is a pro-$p$-version of an analogous result proven by Lyndon for discrete groups. Unfortunately, we cannot give an answer, however we can provide some evidence by showing that for a one-relator pro-$p$-group with $\z(G)=p$ and $\cd G > 2$, the generating relation is always congruent to a $p$-th power modulo elements of Zassenhaus degree $>p$. 

To this end we need the following 

\begin{lemm}
 \label{linearmap}
Assume that $G$ is a finitely generated pro-$p$-group with $\z(G)\ge n$. Then the $n$-fold Massey product
\[
 \langle \cdot,\ldots, \cdot \rangle_n: H^1(G)^n\longrightarrow H^2(G)
\]
induces a linear map
\[
 B_n: H^1(G)\longrightarrow H^2(G),\ \chi\longmapsto \langle \chi,\chi,\ldots, \chi\rangle_n.
\]
If $n$ is not a power of $p$, then $B_n$ is identically zero. 
\end{lemm}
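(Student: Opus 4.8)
The plan is to establish the two assertions in turn, using the shuffle identities of Proposition~\ref{shuffleproperties} as essentially the only tool.

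Since $\z(G)\ge n$, the defining condition (iii) of the Zassenhaus invariant guarantees that the $n$-fold Massey product $\langle\cdot,\dots,\cdot\rangle_n$ is uniquely defined and multilinear on $H^1(G)$; in particular $B_n(\chi):=\langle\chi,\dots,\chi\rangle_n$ is a well-defined map $H^1(G)\to H^2(G)$ with $B_n(0)=0$. To prove linearity I would first prove additivity. Expanding $B_n(\chi+\psi)=\langle\chi+\psi,\dots,\chi+\psi\rangle_n$ by multilinearity gives a sum over subsets $S\subseteq\{1,\dots,n\}$ of the terms $\langle\eta^S_1,\dots,\eta^S_n\rangle_n$, where $\eta^S_i=\chi$ for $i\in S$ and $\eta^S_i=\psi$ for $i\notin S$; the two extreme subsets $S=\{1,\dots,n\}$ and $S=\emptyset$ contribute exactly $B_n(\chi)+B_n(\psi)$. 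For each fixed $a$ with $1\le a\le n-1$, the subsets $S$ with $\#S=a$ correspond bijectively to the $(a,n-a)$-shuffles $f$ via $f\mapsto f(\{1,\dots,a\})$, and under this correspondence $\langle\eta^S_1,\dots,\eta^S_n\rangle_n$ becomes precisely $\langle\theta_{f^{-1}(1)},\dots,\theta_{f^{-1}(n)}\rangle_n$ for the tuple $\theta_1=\dots=\theta_a=\chi$, $\theta_{a+1}=\dots=\theta_n=\psi$; hence the corresponding partial sum vanishes by the $(a,n-a)$-shuffle identity. Summing over $a$ yields $B_n(\chi+\psi)=B_n(\chi)+B_n(\psi)$, and an additive map of $\F_p$-vector spaces is automatically $\F_p$-linear.

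For the vanishing statement I would apply the $(a,n-a)$-shuffle identity to the constant tuple $\xi_1=\dots=\xi_n=\chi$: each of its summands equals $B_n(\chi)$ and there are $\binom{n}{a}$ of them, so $\binom{n}{a}\,B_n(\chi)=0$ in $H^2(G)$ for every $1\le a\le n-1$. If $n$ is not a power of $p$, then by Kummer's (or Lucas') theorem there is an $a$ in this range with $p\nmid\binom{n}{a}$ — concretely one may take $a=p^k$, where $k$ is the position of a nonzero base-$p$ digit of $n$; this lies in $\{1,\dots,n-1\}$ and satisfies $\binom{n}{a}\not\equiv 0\pmod p$ regardless of whether $n$ has a base-$p$ digit $\ge 2$ or two distinct nonzero digits. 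Hence $B_n(\chi)=0$, and since $\chi$ was arbitrary, $B_n\equiv 0$.

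I do not expect a genuine obstacle here; the only point demanding care is the bijection ``subsets of size $a$ $\leftrightarrow$ $(a,n-a)$-shuffles'' in the additivity step, which is exactly what upgrades the a priori only multilinear-on-the-diagonal map $B_n$ to an honestly linear one. As a remark, the vanishing could alternatively be deduced from Theorem~\ref{epsilonmaps} together with Proposition~\ref{grftheo}: for a minimal presentation $1\to R\to F\to G\to 1$ one has $\tr_r B_n(\chi_i)=\pm\,\varepsilon_{(i,\dots,i),p}(r)$, the coefficient of $X_i^n$ in the initial form of $r$ in $\gr F\cong L_{res}(X)$, and no element of $L_{res}(X)_n$ — spanned by $p^j$-th powers of Hall commutators of weight $i$ with $ip^j=n$ — involves the monomial $X_i^n$ unless $i=1$ and $p^j=n$; but the shuffle argument above is shorter and avoids any appeal to finite presentation.
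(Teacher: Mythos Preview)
Your proof is correct and follows essentially the same approach as the paper: additivity via multilinear expansion together with the $(a,n-a)$-shuffle identities for each $1\le a\le n-1$, and vanishing via a single shuffle identity applied to the constant tuple. The paper's specific choice of $a$ is $a=p^k$ where $n=mp^k$ with $(m,p)=1$, which is a particular instance of your Lucas-theorem argument; your treatment of the bijection between size-$a$ subsets and $(a,n-a)$-shuffles is somewhat more explicit than the paper's, and your closing remark on the alternative $\varepsilon_{I,p}$-argument is an addition not present there.
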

\begin{proof}
 Let $\chi, \psi\in H^1(G)$. Then by multilinearity of the Massey product, we have
\begin{eqnarray*}
 B_n(\chi+\psi) & = & \langle \chi_n,\ldots, \chi_n\rangle_n\\
& + & \langle \psi, \chi, \ldots, \chi \rangle_n + \langle \chi, \psi, \chi, \ldots, \chi \rangle_n \ldots + \langle \chi,\ldots, \chi, \psi \rangle_n\\
& + & \langle \psi, \psi, \chi, \ldots, \chi \rangle_n + \langle \psi, \chi, \psi, \chi, \ldots, \rangle_n + \ldots + \langle \chi,\ldots, \chi, \psi, \psi \rangle_n\\
& + & \ldots\\
& + & \langle \chi, \psi, \ldots, \psi \rangle_n + \langle \psi, \chi, \psi, \ldots, \psi \rangle_n \ldots + \langle \psi, \ldots, \psi,\chi \rangle_n\\
& + & \langle \psi, \ldots, \psi \rangle_n.
\end{eqnarray*}
By the shuffle identities \ref{shuffleproperties} it follows that in the above sum all lines except for the first and last ones are zero, hence $B_n(\chi+\psi) = B_n(\chi) + B_n(\psi)$. Now write $n=mp^k$ where $(m,p)=1$ and assume $m>1$. Applying the $(p^k,(m-1)p^k)$-shuffle identity with respect to the entries $\xi_{1}=\ldots=\xi_n=\chi$ we find that 
\[
 \tbinom{mp^k}{p^k} \langle \chi,\ldots, \chi \rangle_n = 0.
\]
Since $p\nmid \binom{mp^k}{p^k}$, this implies the $\langle \chi,\ldots, \chi \rangle_n = 0$.
\end{proof}

\begin{rema}
For $n=p$, it can be shown that $B_p$ equals $-B$, where $B$ denotes the \f{Bockstein homomorphism} $B: H^1(G)\longrightarrow H^2(G)$, i.e.\ the connecting homomorphism associated to the short exact sequence
\[
\begin{tikzpicture}[description/.style={fill=white,inner sep=2pt}, bij/.style={below,sloped,inner sep=1.5pt}]
\matrix (m) [matrix of math nodes, row sep=1.5em,
column sep=2.5em, text height=1.5ex, text depth=0.25ex]
{  0 & \Z/p\Z & \Z/p^2\Z & \Z/p\Z & 0,\\};
\path[->,font=\scriptsize]
(m-1-1) edge node[auto] {} (m-1-2)
(m-1-2) edge node[auto] {$p$} (m-1-3)
(m-1-3) edge node[auto] {} (m-1-4)
(m-1-4) edge node[auto] {} (m-1-5);
\end{tikzpicture}
\]
(see \cite{NSW}, Prop.3.9.14 for $p=2$ and \cite{DV2}, Prop.1.2.15 for the general case).
\end{rema}
 
\begin{prop}
\label{onerelatorapprox}
 Let $G$ be a one-relator pro-$p$-group with generator rank $d=h^1(G)$ and Zassenhaus invariant $\z(G)=p$ and $G=F/(r)$ be a minimal presentation of $G$. Assume that $\cd G > 2$. Then there exists a free basis $x_1,\ldots, x_d$ of $F$ and $y\in F$ such that
\[
 r \equiv x_1^p\modulo F_{(p+1)}.
\]
\end{prop}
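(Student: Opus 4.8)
The plan is to show that the initial form $\rho$ of $r$ in $\gr F$ (for the ordinary Zassenhaus filtration, i.e. $\tau=(1,\dots,1)$) becomes equal to $X_1^{[p]}$ after a suitable change of the free generating set; since the Magnus expansion of $x_1^p$ is $(1+X_1)^p-1=X_1^p$, this is exactly the assertion $r\equiv x_1^p \bmod F_{(p+1)}$ (the element $y$ of the statement plays no role, one may take $y=1$). First I would note that $\z(G)=p$ forces $r\in F_{(p)}\setminus F_{(p+1)}$ for a minimal presentation $G=F/(r)$, so $\rho\neq 0$ is homogeneous of degree $p$ in $\gr F\cong L_{res}(X)\subset\polyx$ (Proposition \ref{grftheo}). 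As $p$ is prime, the only pairs $(i,j)$ with $ip^j=p$ are $(p,0)$ and $(1,1)$, so the restricted Hall basis of Lemma \ref{hallbasis}(ii) yields a unique decomposition $\rho=\lambda+w$ with $\lambda=\sum_{i=1}^d a_iX_i^{[p]}$, $a_i\in\F_p$, and $w\in L(X)_p$ a homogeneous element of the ordinary free Lie algebra.

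Next I would dispose of the case $\lambda=0$: then $\rho=w$ is a Lie polynomial, so by Theorem \ref{onerelatorgrouptheo}(i) the group $G$ is mild and $\cd G=2$, contradicting the hypothesis; hence $(a_1,\dots,a_d)\neq 0$. A linear change of the free generating set induces on $\gr F\cong L_{res}(X)$ the corresponding restricted Lie algebra automorphism, and by Jacobson's formula $(\sum_j g_{ij}X_j)^{[p]}=\sum_j g_{ij}^{\,p}X_j^{[p]}+(\text{Lie polynomial})=\sum_j g_{ij}X_j^{[p]}+(\text{Lie polynomial})$ over $\F_p$; thus such a change acts $\F_p$-linearly on the tuple $(a_i)$ while preserving $L(X)_p$. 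Choosing it so that $(a_i)$ is carried to $(1,0,\dots,0)$, I may assume (renaming the new generators $x_1,\dots,x_d$)
\[
\rho = X_1^{[p]}+w = X_1^{p}+w\ \text{ in }\polyx,\qquad w\in L(X)_p .
\]

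It now suffices to prove $w=0$, which I would do by contradiction: if $w\neq 0$ then $\{\rho\}$ is strongly free, so $G=\langle x_1,\dots,x_d\mid r\rangle$ is a strongly free, hence mild, presentation and $\cd G=2$ (Definition \ref{milddefi}, Theorem \ref{mainmildtheo}(i)) --- absurd. To apply Anick's criterion \ref{anickscrit} I would order the variables so that $X_1$ is the smallest and let $<$ be the attached lexicographic order, which is multiplicative by \ref{lexicographicorderexam}. Then $X_1^p$ is the $<$-smallest monomial of degree $p$; and since a nonzero homogeneous Lie polynomial of degree $\geq 2$ has no monomial of the form $X_i^p$ (the projection $\polyx\to\F_p[X_i]$ killing the other variables is a Lie homomorphism onto the one-dimensional $L(X_i)$, so it annihilates $w$, i.e. the coefficient of $X_i^p$ in $w$ vanishes), every monomial of $w$ is strictly $<$-larger than $X_1^p$. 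Hence the high term $\tilde\rho$ of $\rho$ is the $<$-largest monomial of $w$; the key point is that $\tilde\rho$ is an \emph{unbordered} word (no proper prefix equals a proper suffix), so $\{\tilde\rho\}$ is combinatorially free (Definition \ref{combinatoriallyfreedefi}) and Anick's criterion yields the strong freeness of $\{\rho\}$.

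The key point is where I expect the only real difficulty, so I would handle it as follows. On words of a fixed length, the lexicographic order attached to a variable ordering $O$ is literally the reverse of the one attached to the opposite ordering $O^{\mathrm{op}}$; therefore the $<$-largest monomial of $w$ coincides with the \emph{smallest} monomial of $w$ for the lexicographic order of $O^{\mathrm{op}}$. By the classical theory of Lyndon words (the Lyndon basis of the free Lie algebra), the smallest monomial occurring in a nonzero homogeneous Lie polynomial is a Lyndon word for that order, and Lyndon words are unbordered, which finishes the argument. The main obstacle is precisely matching the ``maximum'' demanded by Anick's criterion with the ``minimum'' for which Lyndon-word theory is stated; the passage to the opposite variable ordering is legitimate only because $\rho$ is homogeneous (all its monomials have the same length $p$), and it is exactly this that forces the choice ``$X_1$ smallest'', keeping $X_1^p$ from stealing the high term. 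One could instead try to run the argument with the special multiplicative orders $<_U$ of Definition \ref{specialorder}, but then one would have to show by hand that the extremal monomial of a degree-$p$ Lie polynomial is unbordered, which seems less transparent than the Lyndon-word route.
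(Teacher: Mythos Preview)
Your argument is correct, and it reaches the same conclusion by a genuinely different route from the paper. Both proofs agree up to the normal form $\rho=X_1^p+w$ with $w\in L(X)_p$; the divergence is in showing $w=0$. The paper does this by passing to a weighted Zassenhaus $(x,\tau)$-filtration with $\tau_1=a>b=\tau_2=\dots=\tau_d$ and $b<a<\tfrac{bp}{p-1}$: under such weights the Hall-commutator part of $r$ has strictly smaller $\tau$-degree than both $x_1^p$ and the tail $r'\in F_{(p+1)}$, so if $w\neq 0$ the $\tau$-initial form of $r$ is a nonzero Lie polynomial and Theorem~\ref{onerelatorgrouptheo}(i) (ultimately Labute's Theorem~\ref{Labuteonerelatorlie}) forces mildness, a contradiction. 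Your approach instead stays with the ordinary filtration and invokes Anick's criterion: ordering $X_1$ smallest makes $X_1^p$ the minimal degree-$p$ monomial, so the high term of $\rho$ is the high term of $w$; reversing the alphabet order on words of fixed length identifies this with the \emph{smallest} monomial of $w$, which Lyndon-basis triangularity shows is a Lyndon word and hence unbordered, giving combinatorial freeness of $\{\tilde\rho\}$. The paper's route has the virtue of using only machinery already built in the article (weighted filtrations and \ref{Labuteonerelatorlie}), while yours is more self-contained combinatorially but imports an external fact (the Lyndon-basis leading-term property); on the other hand, your argument shows directly that $\{\rho\}$ is strongly free for the \emph{ordinary} Zassenhaus filtration whenever $w\neq 0$, without the detour through an auxiliary $(x,\tau)$-filtration.
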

\begin{proof}
By definition of the Zassenhaus invariant $\z(G)$, we have $r\in F_{(p)}\setminus F_{(p+1)}$. Let $\rho\in F_{(p)}/F_{(p+1)}$ denote the initial form of $r$. First choose an arbitrary basis $\tilde{x}=\{\tilde{x}_1,\ldots, \tilde{x}_d\}$ of $F$ and denote by $\tilde{X}=\{\tilde{X}_1,\ldots, \tilde{X}_d\}\in \gr F$ the corresponding initial forms. Suppose that the homomorphism $B_p: H^1(G)\longrightarrow H^2(G)$ given in \ref{linearmap} is zero, then by \ref{epsilonmaps},
\[
 \varepsilon_{I,p}(r) = 0
\]
for any multi-index of length $p$ of the form $I=(i,i,\ldots, i),\ i=1,\ldots d$ (where the map $\varepsilon_{I,p}$ is defined with respect to the basis $\tilde{x}$) and hence $\rho$ contains no summand of the form $\tilde{X}_i^p,\ i=1,\ldots, d$. Since by \ref{grfbasiscoro}, a basis of $F_{(p)}/F_{(p+1)}$ is given by 
\[
\{\tilde{X}_i^p\ |\ i=1,\ldots, d\}\cup C_p,
\]
where $C_p$ are the Hall commutators of weight $p$, this implies that $\rho$ is a Lie polynomial and consequently using \ref{onerelatorgrouptheo}(i), we have $\cd G = 2$ contradicting the assumption. Therefore, $B_p$ is not the zero map (and hence surjective). We choose a basis $\chi_1,\ldots, \chi_d$ of $H^1(G)$ such that $\chi_2,\ldots, \chi_d$ is a basis of $\ker B_p$. Let $x=\{x_1,\ldots, x_d\}$ be a basis of $F$ lifting the corresponding dual basis of $H^1(G)^\vee = H^1(F)^\vee = F/F_{(2)}$ and denote by $X_1,\ldots, X_d\in \gr F$ the corresponding initial forms. Since $B_p(\chi_1)\neq 0$, it follows that $\tr_r(B_p(\chi_1))\neq 0$ and after replacing $x_1$ by $x_1^a$ for some $a\in \F_p^\times$ we may assume that 
\[
 \varepsilon_{I,p}(r) = 1,\ I=(1,1,\ldots, 1)
\]
and $\varepsilon_{I,p}(r) = 0$ for the multi-indices $I=(i,i,\ldots, i),\ i=2,\ldots, d$ (where $\varepsilon_{I,p}$ is now defined with respect to $x$). Hence,
\[
 \rho = X_1^p + C
\]
where $C$ is a (possibly empty) sum of Hall commutators on $X$ of weight $p$. In other words, 
\[
 r = x_1^p\ c\  r'
\]
where $c$ is a (possibly empty) product of Hall commutators of weight $p$ (in the group $F$) and $r'\in F_{(p+1)}$. We claim that $c=1$. To this end, we consider a different filtration on $F$, namely the Zassenhaus $(x,\tau)$-filtration of $F$ where
 \[
 \tau_i=\left\{ \begin{array}{ll} 
a,& \mbox{if}\ i=1 ,\\
b,& \mbox{else}\\
\end{array}\right.
\]
for natural numbers $a,b$ satisfying $a>b,\ a<\frac{bp}{p-1}$. For $f\in F$, we denote by $\omega_\tau(f)$ the degree of $f$ with respect to this filtration, i.e.\ $f\in F_{(\tau, \omega_\tau (f))}\setminus F_{(\tau, \omega_\tau (f)+1)}$. Suppose $c\neq 1$. Since every Hall commutator in $c$ contains the generator $x_1$ at most $p-1$ times, we have
\[
 \omega_{\tau} (c) \le a(p-1) + b.
\]
Furthermore, $\omega_\tau (x_1^p) = pa > a(p-1)+b$ and $\omega_{\tau} (r') \ge (p+1)\min(a,b)=(p+1)b > a(p-1)+b$. Hence the initial form $\tilde{\rho}\in \gr^\tau F$ of $r$ is a (non-zero) sum of Hall commutators, in particular $\tilde{\rho}$ is a Lie polynomial. Hence, again using \ref{onerelatorgrouptheo}(i), $G$ is mild with respect to the Zassenhaus $(x,\tau)$-filtration, which yields a contradiction to the assumption $\cd > 2$. Therefore, $c=1$ and hence $r\equiv x_1^p \mod F_{(p+1)}$.
\end{proof}

A one-relator pro-$p$-group $G$ with Zassenhaus invariant $\z(G)$ divisible by $p$ can still be mild and hence of cohomological dimension $2$ in many cases, even if the initial form of the generating relation is not a Lie polynomial. We will investigate this in more detail in the next section.

\section{Groups of Demu\v{s}kin type}

We introduce the following abbreviation: For $\chi_1, \ldots, \chi_k\in H^1(G)$ and \linebreak $e_1,\ldots, e_k\ge 1$, we denote by $(\chi_1^{e_1}, \chi_2^{e_2},\ldots, \chi_k^{e_k})$ the vector
\[
 (\chi_1^{e_1}, \chi_2^{e_2},\ldots, \chi_k^{e_k})=(\underbrace{\chi_1,\ldots, \chi_1}_{e_1\ \mbox{\tiny entries}},\underbrace{\chi_2,\ldots, \chi_2}_{e_2\ \mbox{\tiny entries}},\ldots, \underbrace{\chi_k,\ldots, \chi_k}_{e_k\ \mbox{\tiny entries}})\in H^1(G)^{e_1+\ldots + e_k}.
\]

\begin{defi}
\label{Demusikindefi}
 Let $G$ be a one-relator pro-$p$-group with $n=\z(G)$. We say that $G$ is of \f{Demu\v{s}kin type}, if for any $\chi\in H^1(G)\setminus \{0\}$ there exist $\psi\in H^1(G)$ and $e\in\N,\ 1\le e\le n$ such that
\[
 \langle (\chi^{e-1}, \psi, \chi^{n-e}) \rangle_n\neq 0.
\]
\end{defi}

 If $\z(G)=2$, the condition given in \ref{Demusikindefi} means that the cup-product pairing $H^1(G)\times H^1(G)\to H^2(G)$ is non-degenerate, i.e.\ $G$ is a Demu\v{s}kin group. In order to show an infinite group of Demu\v{s}kin type $G$ is of cohomological dimension $\cd G = 2$, we first prove the following

\begin{lemm}
\label{shiftinglemma}
 Let $G$ be a finitely generated pro-$p$-group with $n=\z(G)$. Assume that for some $\chi, \psi\in H^1(G)$ and $1\le e\le n$ it holds that
\[
 \langle (\chi^{e-1}, \psi, \chi^{n-e})\rangle_n \neq 0.
\]
Then also
\[
 \langle (\psi, \chi^{n-1}) \rangle_n \neq 0.
\]
\end{lemm}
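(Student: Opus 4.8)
The plan is to encode both the hypothesis and the conclusion in terms of a single family of cohomology classes. Set
\[
 c_j := \langle (\chi^{j-1}, \psi, \chi^{n-j}) \rangle_n \in H^2(G),\qquad 1\le j\le n,
\]
which are well defined since $\z(G)=n$ (so the $n$-fold Massey product is a multilinear map). With this notation the hypothesis is exactly $c_e\neq 0$ and the conclusion is $c_1\neq 0$, so it suffices to prove the formally stronger implication: if $c_1=0$ then $c_j=0$ for all $j$; the lemma then follows by contraposition.

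The only input is the $(a,b)$-shuffle identity \ref{shuffleproperties}, which applies because $\z(G)\ge n$. For each $\ell$ with $1\le \ell\le n-1$ one applies it with $a=\ell$, $b=n-\ell$ to the tuple $(\xi_1,\ldots,\xi_n)$ defined by $\xi_{\ell+1}=\psi$ and $\xi_i=\chi$ for $i\neq \ell+1$. For a $(\ell,n-\ell)$-shuffle $f$, the summand $\langle\xi_{f^{-1}(1)},\ldots,\xi_{f^{-1}(n)}\rangle_n$ equals $c_{f(\ell+1)}$, since the unique $\psi$-entry lands in position $f(\ell+1)$ while every other position receives $\chi$. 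Because $f$ is strictly increasing on $\{\ell+1,\ldots,n\}$, the value $j=f(\ell+1)$ is the minimum of the $(n-\ell)$-element set $f(\{\ell+1,\ldots,n\})$; hence $1\le j\le \ell+1$, and the number of $(\ell,n-\ell)$-shuffles with $f(\ell+1)=j$ equals $\binom{n-j}{n-\ell-1}$ (choose the remaining $n-\ell-1$ values of $f$ on $\{\ell+2,\ldots,n\}$ among $\{j+1,\ldots,n\}$, after which the values of $f$ on $\{1,\ldots,\ell\}$ are forced to be the increasing enumeration of the complement). Therefore the shuffle identity reads
\[
 \sum_{j=1}^{\ell+1}\binom{n-j}{n-\ell-1}\,c_j = 0,\qquad 1\le \ell\le n-1.
\]

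In this relation the coefficient of the top term $c_{\ell+1}$ is $\binom{n-\ell-1}{n-\ell-1}=1$, so it can be solved as $c_{\ell+1}=-\sum_{j=1}^{\ell}\binom{n-j}{n-\ell-1}\,c_j$. An immediate induction on $\ell$, with base case the assumption $c_1=0$, then yields $c_j=0$ for all $1\le j\le n$; passing to the contrapositive gives the lemma. The step requiring genuine care is the combinatorial bookkeeping in the displayed identity — checking that testing the shuffle relation against a single $\psi$ on an all-$\chi$ background isolates precisely the classes $c_1,\ldots,c_{\ell+1}$ with those binomial multiplicities, and in particular that the leading coefficient is $1$, so that the recursion can be run over $\F_p$ for every prime $p$ (including $p=2$, where many intermediate coefficients may vanish). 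Everything else is formal.
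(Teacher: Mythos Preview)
Your proof is correct and is essentially the same argument as the paper's. Both apply the $(e-1,n+1-e)$-shuffle identity to the tuple with $\psi$ in position $e$ and $\chi$ elsewhere, observe that the coefficient of $c_e$ is $1$, and induct on the position of $\psi$; you run the induction upward on the contrapositive while the paper runs it downward directly, which is purely cosmetic. Your binomial count $\binom{n-j}{n-\ell-1}$ is correct (the paper's displayed coefficient $\binom{n+2-k}{e-k}$ is a misprint for $\binom{n-k}{e-k}$, but this does not affect the argument since only the leading coefficient $1$ is used).
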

\begin{proof}
 If $e=1$ we are done. So assume $e > 1$. Applying the $(e-1,n+1-e)$-shuffle identity with entries $\xi_{i}=\chi,\ i\neq e$ and $\xi_{e} = \psi$ we have
\[
 0 = \langle (\chi^{e-1}, \psi, \chi^{n-e})\rangle_n + \sum_{k=1}^{e-1} \tbinom{n+2-k}{e-k} \langle (\chi^{k-1}, \psi, \chi^{n-k}) \rangle_n.
\]
Since $\langle (\chi^{e-1}, \psi, \chi^{n-e})\rangle_n\neq 0$, we find that $\langle (\chi^{k-1}, \psi, \chi^{n-k})\rangle_n\neq 0$ for some $k<e$.  By induction, the claim follows.
\end{proof}

\begin{theo}
\label{Demuskinmain}
Let $G$ be a pro-$p$-group of Demu\v{s}kin type with $n=\z(G)$. If $G$ is infinite, then $G$ is mild (with respect to the Zassenhaus filtration) and in particular $\cd G=2$. If $G$ is finite, then $n=p^k$ is a $p$-th power and $G\cong \Z/p^k\Z$. 
\end{theo}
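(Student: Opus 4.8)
The plan is to treat the finite and infinite cases separately. For the \emph{finite} case I would use only that $G$ is a one-relator pro-$p$-group. Since $h^2(G)=1$ the group is nontrivial, so $G^{ab}$ is a finite $p$-group. Choosing a minimal presentation $G=F/(r)$ with $F$ free on $d=h^1(G)$ generators gives $G^{ab}=\Z_p^d/\Z_p\bar r$, where $\bar r$ is the image of $r$ in $F^{ab}=\Z_p^d$; as $\Z_p^d/\Z_p v$ is infinite for every $d\ge 2$ and every $v$, finiteness forces $d=1$ (and $\bar r\neq 0$, which is automatic from $h^2(G)=1$). Hence $F\cong\Z_p$ and $r$ corresponds to $c=p^ku$ with $u\in\Z_p^\times$ and $k=v_p(c)\ge 1$ (because $r\in F_{(2)}$), so $G\cong\Z/p^k\Z$. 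A short computation with the Magnus expansion $\psi(r)=(1+X_1)^{c}-1=\sum_{j\ge 1}\binom{c}{j}X_1^{j}$—using that $\binom{c}{j}\equiv 0\bmod p$ for $1\le j<p^k$ while $\binom{c}{p^k}\equiv u\not\equiv 0$—shows $r\in F_{(p^k)}\setminus F_{(p^k+1)}$, so $n=\z(G)=p^k$ is a power of $p$, as required.

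For the \emph{infinite} case I would first record the preliminaries: $G$ is finitely presented with $m:=h^2(G)=1$, and $n=\z(G)<\infty$ (as $G$ is not free); moreover $d:=h^1(G)\ge 2$, since $d=1$ would force $G\cong\Z/p^k\Z$ by the finite-case argument. By the definition of $\z(G)$ the $n$-fold Massey product $\langle\cdot,\dots,\cdot\rangle_n\colon H^1(G)^n\to H^2(G)\cong\F_p$ is a well-defined multilinear form that is not identically zero. The plan is to apply Theorem~\ref{cohomologicalcrit} with $e=1$, for which I must produce a decomposition $H^1(G)=U\oplus V$. By Lemma~\ref{linearmap} the map $B_n\colon\chi\mapsto\langle\chi,\dots,\chi\rangle_n$ is $\F_p$-linear, and since $\dimfp H^1(G)=d\ge 2>1=\dimfp H^2(G)$ it has nonzero kernel; hence there exist nonzero subspaces of $H^1(G)$ on which $\langle\cdot,\dots,\cdot\rangle_n$ vanishes identically (for instance any line inside $\ker B_n$). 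I would let $V$ be a \emph{maximal} such subspace and $U$ any complement. Then $V\neq 0$ automatically, and $V\neq H^1(G)$ because $\langle\cdot,\dots,\cdot\rangle_n\not\equiv 0$; so $U$ and $V$ are both nonzero, and condition (a) of Theorem~\ref{cohomologicalcrit} for $e=1$—that $\langle\cdot,\dots,\cdot\rangle_n$ vanish on all $n$-tuples with entries in $V$—holds by the choice of $V$.

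To verify condition (b) for $e=1$ I would pick $\chi\in V\setminus\{0\}$. By Definition~\ref{Demusikindefi} there are $\psi\in H^1(G)$ and $e'$ with $\langle(\chi^{e'-1},\psi,\chi^{n-e'})\rangle_n\neq 0$, and Lemma~\ref{shiftinglemma} upgrades this to $\langle(\psi,\chi^{n-1})\rangle_n\neq 0$. Writing $\psi=u+v$ with $u\in U$ and $v\in V$, multilinearity gives $\langle(\psi,\chi^{n-1})\rangle_n=\langle(u,\chi^{n-1})\rangle_n+\langle(v,\chi^{n-1})\rangle_n$, and the second summand vanishes because all its arguments lie in $V$; hence $\langle(u,\chi^{n-1})\rangle_n\neq 0$, so $\langle\cdot,\dots,\cdot\rangle_n$ already maps $U\otimes V^{\otimes n-1}$ surjectively onto $H^2(G)\cong\F_p$. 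Theorem~\ref{cohomologicalcrit} then yields that $G$ is mild with respect to the Zassenhaus filtration, and in particular $\cd G=2$. The only genuinely non-routine point is the choice of $V$: one must observe that the infiniteness of $G$ forces the strict inequality $\dimfp H^1(G)>\dimfp H^2(G)$, which is exactly what guarantees $\ker B_n\neq 0$ and hence that a maximal null subspace is nonzero; and, once $V$ is taken maximal, the non-degeneracy encoded in the definition of Demu\v{s}kin type is precisely what verifies hypothesis (b). Everything downstream—the construction of the multiplicative order $<_U$ and the invocation of Anick's criterion—is already internal to Theorem~\ref{cohomologicalcrit}, so no further combinatorial work is needed.
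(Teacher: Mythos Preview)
Your proof is correct and follows essentially the same route as the paper: in the infinite case both arguments pick $\chi\in\ker B_n\setminus\{0\}$, use the Demu\v{s}kin-type hypothesis together with Lemma~\ref{shiftinglemma} to produce $\psi$ with $\langle(\psi,\chi^{n-1})\rangle_n\neq 0$, and then apply Theorem~\ref{cohomologicalcrit} with $e=1$. The only difference is cosmetic: the paper simply takes $V=\F_p\chi$ one-dimensional and chooses $U$ to contain $\psi$, whereas you take $V$ maximal and then split $\psi=u+v$; note that your maximality assumption on $V$ is never actually used (any line in $\ker B_n$ already satisfies condition~(a)), so the argument simplifies to exactly the paper's.
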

\begin{proof}
 A one-relator pro-$p$-group $G$ is finite if and only if $h^1(G)=1$. In the latter case, $n=p^k$ for some $k\ge 1$ and $G\cong \Z/p^k\Z$. Now assume that $h^1(G)>1$. By \ref{linearmap}, we have the homomorphism $B_n: H^1(G)\longrightarrow H^2(G)$. By reason of dimension, its kernel is non-trivial, i.e.\ $\langle (\chi^n) \rangle_n=0$ for some $0\neq \chi\in H^1(G)$. Since $G$ is of Demu\v{s}kin type, there exists $\psi\in H^1(G)$ such that $\chi,\psi$ are $\F_p$-linearly independent and 
 \[
 \langle (\chi^{e-1}, \psi, \chi^{n-e}) \rangle_n\neq 0
\]
 for some $1\le e\le n$. By \ref{shiftinglemma}, we conclude that $\langle (\psi,\chi^{n-1}) \rangle_n\neq 0$. Let $V\subset H^1(G)$ be the $\F_p$-subspace spanned by $\chi$ and $U\subset H^1(G)$ be a subspace containing $\psi$ such that $H^1(G)=U\oplus V$. Then, the $n$-fold Massey product $\langle \cdot, \ldots, \cdot \rangle_n$ satisfies the conditions of \ref{cohomologicalcrit} for $e=1$ and the claim follows.
\end{proof}

\begin{rema}\quad
\begin{itemize}
 \item[\rm (i)] If $n=\z(G)$ is prime to $p$, the above statement is already contained in \ref{onerelatorgrouptheo} and the condition of $G$ being of Demu\v{s}kin type is dispensable.
 \item[\rm (ii)] For $\z(G)=2$, the above theorem yields a new proof that infinite Demu\v{s}kin groups are of cohomological dimension $2$. Again, this is significant only for $p=2$ since for odd $p$ it is sufficient to assume that the cup-product $H^1(G)\times H^1(G)\longrightarrow H^2(G)$ is non-zero in order to show that $G$ is mild.
\end{itemize}
\end{rema}

\begin{exam}
\label{Demuskintypeexa}
Let $G = F/(r)$ where $F$ is the free pro-$3$-group on $x_1,x_2,x_3$ and
\[
 r\equiv x_1^3\ x_2^3\ [[x_1,x_3],x_3] \mod F_{(4)}.
\]
Then $G$ is of Demu\v{s}kin type. By \ref{Demuskinmain} it follows that $\cd G=2$. 
\end{exam}

\bibliographystyle{plain}
\bibliography{promotion}

\begin{thebibliography}{10}

\bibitem{DA}
D.~Anick.
\newblock Non-commutative graded {A}lgebras and their {H}ilbert {S}eries.
\newblock {\em {J. of Algebra}}, 78:120--140, 1982.

\bibitem{NB}
N.~Bourbaki.
\newblock {\em Lie {G}roups and {L}ie {A}lgebras, {P}art {I}}.
\newblock Addison-Wesley, 1975.

\bibitem{AB}
A.~Brumer.
\newblock Pseudocompact algebras, profinite groups and class formations.
\newblock {\em {J. of Algebra}}, 4:442--470, 1966.

\bibitem{CD}
C.~Deninger.
\newblock Higher order operations in {D}eligne cohomology.
\newblock {\em {Invent. Math.}}, 120:289--315, 1995.

\bibitem{WGD}
W.G. Dwyer.
\newblock Homology, {M}assey products and maps between groups.
\newblock {\em {J. Pure Appl. Algebra}}, 6:245--275, 1975.

\bibitem{RF}
R.A. Fenn.
\newblock {\em Techniques of Geometric Topology}.
\newblock London Math. Soc. Lecture Notes 57. Cambridge University Press, 1983.

\bibitem{PF2}
P.~Forré.
\newblock Strongly free sequences and pro-$p$-groups of cohomological dimension
  2.
\newblock {\em {J. reine u. angew. Mathematik}}, 658:173--192, 2011.

\bibitem{DG}
D.~Gildenhuys.
\newblock On pro-$p$-groups with a single defining relator.
\newblock {\em Invent. Math.}, 5:357--366, 1968.

\bibitem{JGArith}
J.~Gärtner.
\newblock Rédei symbols and arithmetical mild pro-$2$-groups.
\newblock {\em {preprint}}, 2012.

\bibitem{NJ}
N.~Jacobson.
\newblock {\em Lie algebras}.
\newblock Interscience, New York, 1962.

\bibitem{HK}
H.~Koch.
\newblock {\em Galois Theory of p-Extensions}.
\newblock Springer, 2002.

\bibitem{DK}
D.~Kraines.
\newblock Massey higher products.
\newblock {\em {Trans. Amer. Math. Soc.}}, 124:431--449, 1966.

\bibitem{JLOneRel}
J.~Labute.
\newblock {A}lgèbres de {L}ie et pro-$p$-groupes définies par une seule
  relation.
\newblock {\em {Invent. Math.}}, 4:142--158, 1967.

\bibitem{JL}
J.~Labute.
\newblock {M}ild pro-$p$-groups and {G}alois groups of $p$-extensions of
  $\mathbb{Q}$.
\newblock {\em {J. reine u. angew. Mathematik}}, 596:155--182, 2006.

\bibitem{LM}
J.~Labute and J.~Miná\v{c}.
\newblock Mild pro-2-groups and 2-extensions of $\mathbb{Q}$ with restricted
  ramification.
\newblock {\em {J. of Algebra}}, 332:136--158, 2011.

\bibitem{ML}
M.~Lazard.
\newblock Groupes analytiques $p$-adiques.
\newblock {\em Publ. Math. I.H.E.S.}, 26:389--603, 1965.

\bibitem{JPM}
J.P. May.
\newblock Matric massey products.
\newblock {\em {J. of Algebra}}, 12:533--568, 1969.

\bibitem{MM2}
M.~Morishita.
\newblock Milnor invariants and {M}assey products for prime numbers.
\newblock {\em {Compositio Math.}}, 140:69--83, 2004.

\bibitem{NSW}
J.~Neukirch, A.~Schmidt, and K.~Wingberg.
\newblock {\em Cohomology of Number Fields, 2nd edition}.
\newblock Springer, 2008.

\bibitem{AS3}
A.~Schmidt.
\newblock Circular sets of prime numbers and $p$-extensions of the rationals.
\newblock {\em {J. reine u. angew. Mathematik}}, 596:115--130, 2006.

\bibitem{AS2}
A.~Schmidt.
\newblock {Ü}ber {P}ro-$p$-{F}undamentalgruppen markierter arithmetischer
  {K}urven.
\newblock {\em {J. reine u. angew. Mathematik}}, 640:203--235, 2010.

\bibitem{JSQuestion}
J.-P. Serre.
\newblock Structure de certains pro-$p$-groupes (d'après {D}emush\-kin).
\newblock {\em Sémin. Bourbaki 1962/1963}, 252, 1963.

\bibitem{DV2}
D.~Vogel.
\newblock {\em Massey products in the {G}alois cohomology of number fields}.
\newblock PhD thesis, Universität Heidelberg, 2004.
\newblock www.ub.uni-heidelberg.de/archiv/4418.

\end{thebibliography}

\vspace{20pt}
\address{\noindent Mathematisches Institut\\ Universität Heidelberg\\ Im Neuenheimer Feld 288\\ 69120 Heidelberg\\ Germany\\ \phantom{1} \\e-mail: gaertner@mathi.uni-heidelberg.de}

\end{document}